\documentstyle[12pt]{article}

\topmargin  = -0.2 in \oddsidemargin = 0.25 in
\setlength{\textheight}{8.5in}
\setlength{\textwidth}{6in}
\setlength{\unitlength}{1.0 mm}

\begin{document}

\newtheorem{theorem}{Theorem}[section]
\newtheorem{corollary}[theorem]{Corollary}
\newtheorem{definition}[theorem]{Definition}
\newtheorem{conjecture}[theorem]{Conjecture}
\newtheorem{question}[theorem]{Question}
\newtheorem{lemma}[theorem]{Lemma}
\newtheorem{proposition}[theorem]{Proposition}
\newtheorem{example}[theorem]{Example}
\newtheorem{problem}[theorem]{Problem}
\newenvironment{proof}{\noindent {\bf
Proof.}}{\rule{3mm}{3mm}\par\medskip}
\newcommand{\remark}{\medskip\par\noindent {\bf Remark.~~}}
\newcommand{\pp}{{\it p.}}
\newcommand{\de}{\em}

\newcommand{\JEC}{{\it Europ. J. Combinatorics},  }
\newcommand{\JCTB}{{\it J. Combin. Theory Ser. B.}, }
\newcommand{\JCT}{{\it J. Combin. Theory}, }
\newcommand{\JGT}{{\it J. Graph Theory}, }
\newcommand{\ComHung}{{\it Combinatorica}, }
\newcommand{\DM}{{\it Discrete Math.}, }
\newcommand{\ARS}{{\it Ars Combin.}, }
\newcommand{\SIAMDM}{{\it SIAM J. Discrete Math.}, }
\newcommand{\SIAMADM}{{\it SIAM J. Algebraic Discrete Methods}, }
\newcommand{\SIAMC}{{\it SIAM J. Comput.}, }
\newcommand{\ConAMS}{{\it Contemp. Math. AMS}, }
\newcommand{\TransAMS}{{\it Trans. Amer. Math. Soc.}, }
\newcommand{\AnDM}{{\it Ann. Discrete Math.}, }
\newcommand{\NBS}{{\it J. Res. Nat. Bur. Standards} {\rm B}, }
\newcommand{\ConNum}{{\it Congr. Numer.}, }
\newcommand{\CJM}{{\it Canad. J. Math.}, }
\newcommand{\JLMS}{{\it J. London Math. Soc.}, }
\newcommand{\PLMS}{{\it Proc. London Math. Soc.}, }
\newcommand{\PAMS}{{\it Proc. Amer. Math. Soc.}, }
\newcommand{\JCMCC}{{\it J. Combin. Math. Combin. Comput.}, }
\newcommand{\GC}{{\it Graphs Combin.}, }

\title{The Laplacian eigenvalues of graphs: a survey\thanks{
Supported by National Natural Science Foundation of China (No.
10531070), National Basic Research Program of China 973 Program
(No. 2006CB805901), National Research Program of China 863 Program
(No. 2006AA11Z209) and the Natural Science Foundation of Shanghai
(Grant No. 06ZR14049).}}
\author{  Xiao-Dong Zhang \\
{\small Department of Mathematics}\\
{\small Shanghai Jiao Tong University} \\
{\small  800 Dongchuan road, Shanghai, 200240, P.R. China}\\
{\small Email:  xiaodong@sjtu.edu.cn}
 }
\date{}
\maketitle
 \begin{abstract}
  The Laplacian matrix of a simple graph is the difference of the diagonal matrix of vertex
degree and the (0,1) adjacency matrix.  In the past decades, the
Laplacian spectrum has received much more and more attention,
since it has been applied to several fields, such as randomized
algorithms, combinatorial optimization problems and machine
learning.
 This paper is primarily a survey of various aspects of the
 eigenvalues   of the Laplacian matrix of a graph  for the past teens.
In addition, some new unpublished  results and questions are
concluded. Emphasis is given on classifications of the upper and
lower bounds for the Laplacian eigenvalues of graphs (including
some special graphs, such as trees, bipartite graphs,
triangular-free graphs, cubic graphs,  etc.) as a function of
other graph invariants, such as degree sequence, the average
2-degree, diameter, the maximal independence number, the maximal
matching number,   vertex connectivity, the domination number, the
number of the spanning trees, etc.

 \end{abstract}

{{\bf Key words:} Laplacian matrix, Laplacian eigenvalue,  graph,
tree,  upper bound, lower bound,  degree sequence, the
independence number,  majorization.
 }

      {{\bf AMS Classifications:} 05C50, 05C05, 15A48}
\vskip 0.5cm

\section{Introduction}

The Laplacian matrix has a long history. The first  celebrated
result is  attributable to Kirchhoff \cite{kirchhoff1847} in an
1847 paper concerned with electrical networks. However, it did not
receive much attention until  the work of Fiedler, which appeared
in 1973 \cite{fiedler1973} and 1975 \cite{fiedler1975}.  Mohar in
his survey \cite{mohar1991}  argued that, because of its
importance in various physical and chemical theories, the spectrum
of the Laplacian matrix  is more natural and important than the
more widely studied adjacency spectrum. In \cite{alon1986}, Alon
used the smallest positive eigenvalue of the Laplacian matrix  to
estimate the expander and magnifying coefficients of graphs.

 There are several books and survey papers concerned with the
 Laplacian matrix of a graph. For example,  in 1997, Chung \cite{Chung1998} published his book entitled
 "Spectral graph theory" which investigated the theory of the Laplacian
 matrix with aid of the ideas and methods  of differential
 manifold. In 1991 and 1992, Mohar \cite{mohar1991}, \cite{mohar1992} surveyed
a detailed introduction to the Laplacian matrix. Further, in 1997,
he surveyed  several applications of eigenvalues of the Laplacian
matrices of graphs in graph theory and in combinatorial
optimization.  In 1994, Merris \cite{merris1994} surveyed the
properties of the Laplacian matrix from the view of linear algebra
and graph theory. Further, in 1995, he \cite{merris1995} surveyed
the relations between the parameters and the spectrum of the
Laplacian matrix and some applications which was not appeared in
\cite{merris1994}. In 1991, Grone \cite{grone1991} surveyed the
geometry properties of the Laplacian matrix. Recently, Abreu
\cite{abreu2007} surveyed the old and new results of the second
smallest Laplacian eigenvalue. For the more background and
motivation on research of the Laplacian matrix, the reader may be
referred to the above books, surveys and their references in
there.

This paper is  a survey of recent new results and questions on the
spectrum of the Laplacian matrix.  The present content is biased
by the viewpoint and the interests of the authors and can not be
complete. Therefore we apologize to all those who feel that their
work is missing in the references or has not been emphasized
sufficiently in this survey.

Let $G = (V,~E)$ be a simple graph (no loops or multiple edges)
with vertex set $V(G)=\{v_1,\cdots, v_n\}$ and edge set $E(G)$.
Denote by $d(v_i)$ or $d_G(v_i)$ the {\it degree} of vertex $v_i$.
If $D(G)=diag(d(u), u\in V)$ is the diagonal matrix of vertex
degrees of $G$ and $A(G)$ is the $(0,1)$ {\it  adjacency matrix}
of $G$, then the matrix $L(G)=D(G)-A(G)$ is called the {\it
Laplacian matrix} of a graph $G$. It is obvious that $L(G)$ is
positive semidefinite and singular $M-$matrix. Thus the all
eigenvalues of $L(G)$  are called the  {\it Laplacian eigenvalues}
(or sometimes just eigenvalues) of $G$  and arranged in
nonincreasing order:
$$\lambda_1\ge \lambda_2\ge\cdots\ge \lambda_{n-1}\ge
\lambda_n=0.$$ When more than one graph is under discussion, we
may write $\lambda_i(G)$ instead of $\lambda_i$. From the
matrix-tree theorem, $\lambda_{n-1}>0$ if and only if $G$ is
connected. This observation led Fiedler to define the {\it
algebraic connectivity} of $G$ by $\alpha(G)=\lambda_{n-1}(G)$,
which may be considered a quantitative measure of connectivity.

 Let $G=(V, E)$ be a  simple graph
with vertex set $V(G)=\{v_1,\cdots, v_n\}$ and edge set
$E(G)=(e_1, e_2,\cdots, e_m)$. For each edge $e_k=(v_i, v_j)$,
choose one of $v_i$ or $v_j$  to be the positive end of $e_k$ and
the other to be the negative end. We refer to this procedure by
saying $G$ has been given an {\it orientation}. For an arbitrary
given orientation of $G$, the {\it oriented vertex-edge incidence
matrix}  is the $n\times m$ matrix $Q=Q(G)=(q_{ij})$, where
$$q_{ij}=\left\{\begin{array}{ll}
+1,  & {\rm if}\  v_i\ {\rm is\  the\  positive\  end\  of}\  e_j\\
-1,  & {\rm if}\  v_i \ {\rm is\  the\ negative\ end\ of\ }\ e_j\\
0, & {\rm otherwise.} \end{array}\right.$$
 While $Q$ depends on the orientation of $G$, $QQ^T$ does not. In
 fact, for any orientation of $G$, it is easy to see that
 $$Q(G)Q(G)^T=D(G)-A(G)=L(G).$$
Thus one may also describe $L(G)$ by means of its quadratic form
$$x^TL(G)x=(Q(G)^Tx)^T(Q(G)^Tx)=\sum(x_i-x_j)^2,$$
where $x=(x_1,\cdots, x_n)^T$ is $n-$dimension real vector and the
sum is taken over all pairs $i<j$ for which $(v_i, v_j)\in E(G)$.

 The first  appearance of $L(G)$  may occur in Kirchhoff's {\it matrix-tree
 theorem } \cite{kirchhoff1847}:
 \begin{theorem}(\cite{kirchhoff1847})\label{kirchhoff1847}
 Let $L(i|j)$ be the $(n-1)\times (n-1)$ submatrix of $L(G)$, which
 is obtained by deleting its $i-$th row and $j-$column. Denote by $\tau(G)$ the
 number of  spanning trees in $G$. Then
 $$\tau(G)=(-1)^{i+j}\det L(i|j)=\frac{1}{n}\ \prod_{i=1}^{n-1}\lambda_i.$$
  \end{theorem}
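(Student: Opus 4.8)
The plan is to prove the two equalities in turn: first the combinatorial identity $\tau(G)=(-1)^{i+j}\det L(i|j)$, and then the link $(-1)^{i+j}\det L(i|j)=\frac1n\prod_{i=1}^{n-1}\lambda_i$ via the characteristic polynomial. Since $L(G)=D(G)-A(G)$, every row and every column of $L$ sums to zero, so $\mathbf{1}$ spans both its kernel and left-kernel when $G$ is connected (by the quadratic form $x^{T}Lx=\sum(x_i-x_j)^2$, which forces $x$ to be constant on each component). Writing $\mathrm{adj}(L)$ for the adjugate, the identities $L\cdot\mathrm{adj}(L)=\det(L)\,I=0$ and $\mathrm{adj}(L)\cdot L=0$ then put every column and every row of $\mathrm{adj}(L)$ into $\mathrm{span}(\mathbf{1})$, so all entries of $\mathrm{adj}(L)$—that is, all signed cofactors $(-1)^{i+j}\det L(i|j)$—coincide. (If $G$ is disconnected both sides vanish, so one may assume $G$ connected.) It therefore suffices to evaluate the single principal cofactor $\det L(n|n)$.

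For that evaluation I would use the factorization $L=Q Q^{T}$ recorded above. Deleting the $n$-th row of $Q$ gives an $(n-1)\times m$ matrix $\bar{Q}$ with $L(n|n)=\bar{Q}\bar{Q}^{T}$, and the Cauchy--Binet formula yields $\det L(n|n)=\sum_{S}\left(\det\bar{Q}_{S}\right)^{2}$, the sum running over all $(n-1)$-element edge sets $S$. The crux is the lemma that $\det\bar{Q}_{S}=\pm1$ when $S$ is a spanning tree of $G$ and $\det\bar{Q}_{S}=0$ otherwise. If $S$ contains a cycle, the signed incidence columns around that cycle sum to zero, so the columns are dependent and the determinant vanishes; and an acyclic $S$ with $n-1$ edges is a spanning tree. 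If $S$ is a spanning tree, any leaf $v\neq v_n$ (a tree on $n\ge2$ vertices has at least two leaves) contributes a row of $\bar{Q}_{S}$ with a single entry $\pm1$; expanding along it deletes $v$ and its edge and reduces to the spanning-tree incidence matrix of $G-v$, giving $\pm1$ by induction. Hence the Cauchy--Binet sum counts exactly the spanning trees, so $\det L(n|n)=\tau(G)$, and by the preceding paragraph $\tau(G)=(-1)^{i+j}\det L(i|j)$ for all $i,j$.

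For the eigenvalue formula I would compare two expressions for the coefficient of $\lambda$ in $\det(\lambda I-L)$. Because $\lambda_n=0$, we have $\det(\lambda I-L)=\lambda\prod_{i=1}^{n-1}(\lambda-\lambda_i)$, whose linear coefficient is the constant term of the product, namely $(-1)^{n-1}\prod_{i=1}^{n-1}\lambda_i$. On the other hand, the standard principal-minor expansion of the characteristic polynomial shows that the coefficient of $\lambda$ equals $(-1)^{n-1}$ times the sum of all $(n-1)\times(n-1)$ principal minors of $L$, i.e. $(-1)^{n-1}\sum_{i=1}^{n}\det L(i|i)$. By the first part each $\det L(i|i)=\tau(G)$, so that sum is $n\,\tau(G)$. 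Equating the two linear coefficients gives $n\,\tau(G)=\prod_{i=1}^{n-1}\lambda_i$, that is $\tau(G)=\frac1n\prod_{i=1}^{n-1}\lambda_i$.

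The main obstacle is the spanning-tree lemma, the one genuinely combinatorial step; everything else is linear-algebra bookkeeping. The chief risk is sign management—both in the alternating cofactor expansion of the leaf argument and in matching signs in the principal-minor expansion of $\det(\lambda I-L)$—but these are routine once the lemma is secured.
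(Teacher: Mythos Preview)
Your proof is correct and is in fact the standard textbook argument for Kirchhoff's Matrix--Tree Theorem: equality of all cofactors via the adjugate identity, evaluation of a single principal cofactor through the factorization $L=QQ^{T}$ and Cauchy--Binet together with the $\pm1/0$ lemma for reduced incidence minors, and finally identification of the coefficient of $\lambda$ in $\det(\lambda I-L)$ with $(-1)^{n-1}n\,\tau(G)$.

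There is, however, nothing to compare it to: the paper does not supply a proof of this theorem. It is stated as the classical result of Kirchhoff (cited as \cite{kirchhoff1847}) and is used only as historical and motivational background for the survey; immediately after the statement the author simply remarks that ``in view of this result, $L(G)$ is sometimes called the \emph{Kirchhoff matrix}'' and moves on. So your write-up stands on its own as a complete, standard proof rather than as a variant of anything in the paper.
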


  In view of this result, $L(G)$ is sometimes called the {\it
  Kirchhoff matrix} or {\it matrix of admittance}
  (admitance=conductivity, the reciprocal of impedance).
  However, we will refer to $L(G)$ as a Laplacian matrix because it is a
discrete analogue of the Laplace differential operator. The
Laplacian matrix of  a graph and its eigenvalues can be used in
several areas of mathematical research and  have a physical
interpretation in various physical and chemical theories. The
adjacency matrix of a graph and its eigenvalues have been much
investigated in the monographs \cite{Cvetkovic1980} and
\cite{cvetkovic1988}. The normalized Laplacian matrix
${\mathcal{L}}(G)=D^{-1/2}L(G)D^{-1/2}$ of a graph and its
eigenvalues has studied in the monographs \cite{Chung1998}.

In this paper, we survey the Laplacian eigenvalues of a graph. In
section 2, some basic and important properties of the Laplacian
eigenvalues are reviewed. In section 3, the largest Laplacian
eigenvalue is heavily investigated. Many upper and lower bounds
for the largest Laplacian eigenvalues  of graphs  and special
graphs (including tree, cubic graphs, triangular free graphs,
etc.) are presented. Proofs  of part important results are also
given. In section 4, the second Laplacian eigenvalue is studied
and a question is proposed. In section 5, the bounds for the
$k-$largest Laplacian eigenvalue are discussed. In section 6, the
upper and lower bounds for  the second smallest Laplacian
eigenvalue, i.e., algebraic connectivity, are studied. Moreover,
the relations between algebraic connectivity and graph parameters
are obtained. In section 7, the sum of the   Laplacian eigenvalues
are investigated with emphasizing on two conjectures of Grone and
Merris in \cite{grone1994}.

\section{Preliminary}
 Let $G=(V(G), E(G))$
be a simple graph. The line graph of $G$, written $G^l$, is the
graph whose vertex set is the edge set $E(G)$ of $G$ and  whose
two vertices  are adjacent if and only if they have one common
vertex  in $G$. Denoted by $D(G)=diag(d(u), u\in V)$  and $A(G)$
the diagonal matrix of vertex degrees of $G$ and  the $(0,1)$ {\it
adjacency matrix} of $G$ respectively. The matrix $K(G)=D(G)+A(G)$
is called the {\it unoriented Laplacian matrix} of $G$.  Moreover,
denote by $Q(G)$ the oriented vertex-edge incidence matrix. Let
$X=(x_{ij})$ be an $(n\times n)$ matrix. Denote by
$|X|=(|x_{ij}|)$ the matrix whose entries are absolute values of
the entries of $X$.  Denote by $\rho(X)$ the largest modulus of
eigenvalues of $X$. Then we sum up  some preliminary results from
\cite{li1997}, \cite{merris1991}, \cite{merris1994} \cite{shu2002}
as follows:
\begin{lemma}\label{graph-line}
Let $G$ be a simple graph. Then
\begin{equation}\label{unori-line}
K(G)=D(G)+A(G)=|Q(G)Q(G)^T|=|Q(G)| |Q(G)^T|.
\end{equation}
\begin{equation}\label{graph-adj}
|Q^T(G)Q(G)|=2I+A(G^l),
\end{equation}
where $I$ is the identity matrix.
\begin{equation}\label{graph-line-eig}
\lambda_1(G)\le \rho(K(G))=2+\rho(A(G^l))
\end{equation}
with equality if and only $G$ is bipartite.
\end{lemma}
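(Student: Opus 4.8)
The plan is to treat the three displayed identities in turn and then combine them, via a Perron--Frobenius comparison, to obtain the inequality and its equality case. For (\ref{unori-line}) I would argue entrywise. Using $Q(G)Q(G)^T=L(G)=D(G)-A(G)$ from the introduction, the $(i,i)$ entry of $QQ^T$ is $\sum_k q_{ik}^2=d(v_i)$, while for $i\neq j$ the $(i,j)$ entry is $\sum_k q_{ik}q_{jk}$; in a simple graph the only possibly nonzero term comes from the unique edge joining $v_i$ and $v_j$, and it equals $(+1)(-1)=-1$. Hence every off-diagonal entry is $0$ or $-1$, so taking absolute values turns $D(G)-A(G)$ into $D(G)+A(G)=K(G)$, proving $|Q(G)Q(G)^T|=K(G)$. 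For the factorization, set $B=|Q(G)|$, the unoriented incidence matrix: then $(BB^T)_{ii}=d(v_i)$ and $(BB^T)_{ij}$ counts edges incident with both $v_i$ and $v_j$, i.e. $1$ if $v_iv_j\in E(G)$ and $0$ otherwise, so $BB^T=K(G)$ as well. The point making $|QQ^T|=|Q||Q^T|$ is that for each fixed pair the nonzero summands $q_{ik}q_{jk}$ all share one sign, so no cancellation occurs.

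For (\ref{graph-adj}) I would compute the $m\times m$ matrix $Q^TQ$ indexed by edges. Its $(k,k)$ entry is $\sum_i q_{ik}^2=2$, since each edge has exactly two endpoints, and for $e_k\neq e_l$ the $(k,l)$ entry is $\sum_i q_{ik}q_{il}$; because $G$ is simple, two distinct edges meet in at most one vertex, so this sum has at most one nonzero term, equal to $\pm1$ precisely when $e_k$ and $e_l$ are adjacent in $G^l$. Taking absolute values replaces each such $\pm1$ by $1$, giving $|Q^TQ|=2I+A(G^l)$. The same computation with $|Q|$ in place of $Q$ yields the sign-free identity $|Q|^T|Q|=B^TB=2I+A(G^l)$, and this matrix, being a Gram matrix, is positive semidefinite.

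For (\ref{graph-line-eig}) I would first establish the spectral-radius identity and then the inequality. Since $K(G)=BB^T$ and $B^TB=2I+A(G^l)$ have the same nonzero eigenvalues and both are positive semidefinite, their spectral radii agree; because $A(G^l)$ is a nonnegative symmetric matrix its spectral radius equals its largest eigenvalue, so $\rho(K(G))=\lambda_{\max}(2I+A(G^l))=2+\rho(A(G^l))$. For the inequality, note from (\ref{unori-line}) that $K(G)$ is exactly the entrywise absolute value of $L(G)$, i.e. $|L(G)|=K(G)$. Since $L(G)$ is positive semidefinite, $\lambda_1(G)=\rho(L(G))$, and the comparison theorem (if $|M|\le N$ entrywise with $N\ge 0$ then $\rho(M)\le\rho(N)$) applied to $M=L(G)$, $N=K(G)$ gives $\lambda_1(G)\le\rho(K(G))$.

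It remains to pin down the equality case, which I expect to be the main obstacle. If $G$ is bipartite with parts $X,Y$, let $S=\mathrm{diag}(s_i)$ with $s_i=+1$ on $X$ and $s_i=-1$ on $Y$; since every edge joins $X$ to $Y$ we have $SA(G)S=-A(G)$, hence $SL(G)S=D(G)+A(G)=K(G)$, so $L(G)$ and $K(G)$ are similar and share all eigenvalues, forcing $\lambda_1(G)=\rho(K(G))$. Conversely, assuming (for connected $G$, so that $K(G)$ is irreducible) that equality holds, I would invoke the equality clause of Wielandt's comparison theorem: $\rho(L)=\rho(|L|)$ with $|L|$ irreducible forces $L=\pm SKS$ for some $\pm1$ diagonal $S$, and the alternative $L=-SKS$ is excluded because $L(G)$ and $K(G)$ share the nonnegative diagonal $D(G)$. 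Expanding $D-A=S(D+A)S=D+SAS$ then gives $SAS=-A$, i.e. $s_is_j=-1$ whenever $v_iv_j\in E(G)$, and this $2$-colouring exhibits $G$ as bipartite (the general case following componentwise). The delicate point is precisely justifying this rigidity, for which the irreducibility supplied by connectedness, together with the diagonal-sign argument, is essential.
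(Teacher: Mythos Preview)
The paper does not supply a proof of this lemma; it is stated as a compilation of known facts from \cite{li1997}, \cite{merris1991}, \cite{merris1994}, \cite{shu2002} with no argument given. Your proof is the standard one and is correct in all essentials: the entrywise verifications of (\ref{unori-line}) and (\ref{graph-adj}) are routine, the identity $\rho(K(G))=2+\rho(A(G^l))$ follows exactly as you say from the shared nonzero spectrum of $BB^T$ and $B^TB$ with $B=|Q|$, and the inequality $\lambda_1(G)\le\rho(K(G))$ together with its equality characterization via Wielandt's theorem is the canonical route.

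One small caution on the equality case. Your componentwise reduction for disconnected $G$ does not actually yield that \emph{all} of $G$ is bipartite: if $G=K_{2,2}\cup K_3$, then $\lambda_1(G)=4=\rho(K(G))$ (both maxima are attained on the $K_{2,2}$ block, while $\rho(K(K_3))=4$ as well), yet $G$ contains an odd cycle. Thus the ``if and only if'' in (\ref{graph-line-eig}) as stated is valid only under a connectedness hypothesis, which the cited sources and the paper's later applications tacitly assume. Your instinct to flag irreducibility as essential was exactly right; the issue lies with the statement itself rather than with your argument.
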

 A {\it semiregular graph} $G=(V,E)$ is a graph  with bipartition $(V_1,
 V_2)$ of $V$ such that all vertices in $V_i$ have the same degree
 $k_i$ for $i=1,2$.

\begin{lemma}(\cite{zhang2002a}\label{graph-line-equal}
Let $G$ be a simple connected graph. Then  the line graph $G^l$ of
$G$ is regular  or semiregular if and only if $G$ is regular or
semiregular or a path of order $4$.\end{lemma}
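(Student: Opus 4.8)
The plan is to reduce everything to the degree of a vertex of $G^l$. If $e=(u,v)$ is an edge of $G$, the vertices of $G^l$ adjacent to $e$ are precisely the other edges meeting $u$ or $v$, so $d_{G^l}(e)=d_G(u)+d_G(v)-2$. Thus every assertion about $G^l$ being regular or semiregular becomes an arithmetic condition on the sums $d_G(u)+d_G(v)$ taken over the edges $(u,v)$ of $G$, and I would organize both directions around this single formula.

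For the ``if'' direction I would just evaluate the formula. If $G$ is $k$-regular, every edge gives $d_{G^l}(e)=2k-2$, so $G^l$ is regular. If $G$ is semiregular with part-degrees $k_1,k_2$, then every edge joins the two parts, so $d_{G^l}(e)=k_1+k_2-2$ for all $e$ and $G^l$ is again regular. Finally, for $G=P_4$ a direct computation gives $G^l=P_3$, and the bipartition of $P_3$ into its two ends and its center exhibits $P_3$ as semiregular. Hence in all three cases $G^l$ is regular or semiregular.

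For the ``only if'' direction, assume $G^l$ is regular or semiregular and split on whether it is regular. If $G^l$ is $r$-regular, then $d_G(u)+d_G(v)=r+2=:c$ for every edge, so any two neighbors of a common vertex have equal degree. Fixing a root and using a breadth-first search, every vertex receives degree $a$ or $c-a$ according to the parity of its distance from the root; if these two values coincide then $G$ is regular, and otherwise no edge can join two vertices of the same degree, so $G$ is bipartite with its two degree classes as parts, i.e. semiregular. If instead $G^l$ is semiregular but not regular, then $G^l$ is bipartite, hence triangle-free; since three edges incident to a vertex of degree $\ge 3$ would form a triangle in $G^l$, this forces $\Delta(G)\le 2$, so the connected graph $G$ is a path or a cycle. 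Cycles are excluded because $C_n^l=C_n$ is regular, and among paths $P_n^l=P_{n-1}$ is semiregular but not regular only for $n=4$ (for $n\ge 5$ the path $P_{n-1}$ already contains a degree-$1$ vertex and a degree-$2$ vertex in the same class of its bipartition, so it fails to be semiregular). This pins $G$ down to $P_4$.

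The main obstacle I anticipate is the regular subcase of the ``only if'' direction: upgrading the purely local statement ``neighbors of a common vertex share a degree'' to the global regular/semiregular dichotomy requires the parity argument along the breadth-first tree, and one must verify that an odd closed walk would force $a=c-a$ (returning to the regular case) so that the bipartition used to declare $G$ semiregular is genuinely well defined and its two parts are exactly the two degree classes. The remaining steps are short arithmetic with the degree formula together with the standard identifications $C_n^l=C_n$ and $P_n^l=P_{n-1}$.
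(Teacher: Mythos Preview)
Your proposal is correct and follows essentially the same strategy as the paper for the case where $G^l$ is regular: both arguments rest on the degree formula $d_{G^l}(e_{uv})=d_G(u)+d_G(v)-2$, deduce that any two neighbours of a common vertex in $G$ share the same degree, and then use a parity/odd-cycle argument to conclude that $G$ is regular or bipartite semiregular. Your BFS formulation and the paper's ``an odd cycle forces two adjacent vertices of equal degree'' are the same observation phrased differently.

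Where you go beyond the paper's sketch is the subcase $G^l$ semiregular but not regular: the proof printed in the survey simply does not treat this case (it stops after the regular subcase), whereas you supply a clean argument---$G^l$ bipartite $\Rightarrow$ $G^l$ triangle-free $\Rightarrow$ $\Delta(G)\le 2$ $\Rightarrow$ $G$ is a path or cycle, and then a short check isolates $P_4$. This is a genuine completion of the argument and is the natural way to recover the exceptional $P_4$.
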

\begin{proof}
since sufficiency is obvious, we only consider necessity. If $G^l$
is k-regular, then for each edge $e_{uv}=(u,v)\in E(G)$, the
degree of vertex $e_{uv}$ in $G^l$ is equal to
$d_{G^l}(e_{uv})=d_G(u)+d(v)-2$.  Hence if two vertices of $G$
share a common vertex, then they have the same degree. Since $G$
is connected, this implies that there are at most two different
degrees. If two adjacent vertices have same degree, it is easy to
show that $G$ is regular by means of induction argument. If $G$
contains a cycle of odd length, then it must have two adjacent
vertices  with the same degree.  Therefore, if $G$ is not regular,
then it does not contain any cycle of odd length, which implies
that $G$ is bipartite. So $G$ is semiregular.
\end{proof}

\begin{lemma}(\cite{merris1994})\label{graph-com-spe}
Let $G$ be a simple graph on $n$ vertices and $G^c$ be the
complement graph of $G$ in the complement graph. Then
\begin{equation}\label{graph-com-sec-1}
\lambda_1(G)\le n .\end{equation} \
 \begin{equation}\label{graph-com-2}
 \lambda_i(G^c)=n-\lambda_{n-i}(G)\  {\rm for} \ i=1,\cdots, n-1.
 \end{equation}
 \end{lemma}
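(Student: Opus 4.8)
The plan is to reduce both statements to a single matrix identity relating the Laplacians of $G$ and its complement. First I would record the two elementary facts $A(G)+A(G^c)=J-I$, where $J$ denotes the all-ones matrix (every off-diagonal entry of $A(K_n)$ equals $1$), and $D(G)+D(G^c)=(n-1)I$, since each vertex satisfies $d_G(v)+d_{G^c}(v)=n-1$. Subtracting then yields the key identity
$$L(G)+L(G^c)=(n-1)I-(J-I)=nI-J.$$

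Next I would exploit that the all-ones vector $\mathbf{1}=(1,\dots,1)^T$ is a common eigenvector: $L(G)\mathbf{1}=L(G^c)\mathbf{1}=0$. Since both matrices are symmetric and annihilate $\mathbf{1}$, each leaves the $(n-1)$-dimensional subspace $\mathbf{1}^{\perp}=\{x:\sum_i x_i=0\}$ invariant. On $\mathbf{1}^{\perp}$ the matrix $J$ acts as the zero operator, so the identity above restricts there to $L(G)+L(G^c)=nI$. Consequently, if $v\in\mathbf{1}^{\perp}$ is an eigenvector of $L(G)$ with eigenvalue $\lambda$, then $L(G^c)v=nv-L(G)v=(n-\lambda)v$, so $v$ is simultaneously an eigenvector of $L(G^c)$ with eigenvalue $n-\lambda$. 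Thus complementation induces a bijection $\lambda\mapsto n-\lambda$ between the spectra of $L(G)$ and $L(G^c)$ restricted to $\mathbf{1}^{\perp}$.

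The remaining work is bookkeeping of the ordering. Listing the eigenvalues of $L(G)$ on $\mathbf{1}^{\perp}$ in nonincreasing order as $\lambda_1(G)\ge\cdots\ge\lambda_{n-1}(G)$, the paired eigenvalues of $L(G^c)$ are $n-\lambda_1(G)\le\cdots\le n-\lambda_{n-1}(G)$, which appear in the reverse order. Re-sorting them into nonincreasing order gives $\lambda_i(G^c)=n-\lambda_{n-i}(G)$ for $i=1,\dots,n-1$, which is exactly (\ref{graph-com-2}). Inequality (\ref{graph-com-sec-1}) then follows for free: taking $i=n-1$ gives $\lambda_{n-1}(G^c)=n-\lambda_1(G)$, and because $L(G^c)$ is positive semidefinite this quantity is nonnegative, forcing $\lambda_1(G)\le n$.

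I expect the only genuinely delicate point to be the order-reversal: one must keep track that complementation reverses the sorted order on $\mathbf{1}^{\perp}$, so the index $i$ of $G^c$ pairs with the index $n-i$ of $G$ rather than with $i$ itself. The matrix identity and the common eigenvector $\mathbf{1}$ are routine, and no deeper machinery is needed for this lemma.
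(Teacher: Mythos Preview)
Your argument is correct and follows essentially the same route as the paper: both hinge on the identity $L(G)+L(G^c)=nI-J$ and read off the complementary spectrum from it. Your write-up is simply more explicit about the restriction to $\mathbf{1}^{\perp}$ and the index-reversal bookkeeping, which the paper leaves to the reader.
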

 \begin{proof} Since
 $$L(G)+L(G^c)=nI-J,$$
where $J$ is the $n\times n$ matrix each of whose entries is 1. It
follows that the Laplacian spectrum of $G^c$ is
$$n-\lambda_{n-1}(G)\ge n-\lambda_{n-2}(G)\ge\cdots\ge
n-\lambda_1(G)\ge 0.$$ Therefore the assertion holds.
\end{proof}

There are several useful min-max formulas for the expression of
eigenvalues of a symmetric matrix and their sums. If $M$ is a real
symmetric matrix of order $n\times  n$ and $ \mathbf{R}^n$ is the
$n$ real dimension vector space,  then  Rayleigh-Ritz  ration (see
p.176 in \cite{horn1985}) may be expressed as follows.

\begin{equation}\label{max-eig}
\lambda_1(M)=\max\{x^TMx\ \ | \ \parallel x\parallel =1, x\in
 \mathbf{R}^n \ \ \}
 \end{equation}
and
\begin{equation}\label{min-eig} \lambda_1(M)=\min\{x^TMx\ \ |
\
 \parallel x\parallel =1, x\in
 \mathbf{R}^n \ \ \}.
 \end{equation}
In general, the min-max characterization of $\lambda_k(M)$ is
called {\it Courant-Fischer "min-max theorem"}(see p.179 in
\cite{horn1985})
\begin{equation}\label{courant-fischer-th}
\lambda_k(M) = \max_{ U}\min_{ x}\{x^TMx\ | \ \parallel x\parallel
=1, x\in U\ \},
\end{equation}
 where the first minimum is over all
$k-$dimensional subspaces $U$ of $R^n$.

\section{The Largest Laplacian eigenvalue}
In this section, we  will discuss the upper and lower bounds for
the largest Laplacian eigenvalue for graphs and several kinds of
special graphs, including trees, triangular-free graphs, cubic
graphs. There are a lot of papers focus on this topic.

 \subsection{The upper bound versus degree sequences}

In 1985, Anderson and Morley  \cite{anderson1985} may first obtain
the upper bound for the largest Laplacian eigenvalue. They  showed
the following:

\begin{theorem}(\cite{anderson1985}) \label{anderson1985-th}
Let $G$ be a simple graph. Then
\begin{equation}\label{anderson1985-1}
 \lambda_1\le
\max\{d(u)+d(v)|(u,v)\in E(G)\},
\end{equation}
where $d(u)$ is the degree of vertex $u$. \end{theorem}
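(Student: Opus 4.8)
The engine of the proof is already isolated in Lemma~\ref{graph-line}: by inequality~(\ref{graph-line-eig}) one has $\lambda_1(G)\le 2+\rho(A(G^l))$, where $G^l$ is the line graph of $G$. This reduces the whole problem to estimating the spectral radius of the adjacency matrix of $G^l$ by its maximum vertex degree; in this main route the orientation-dependent matrix $Q$ never has to be examined directly. (If $G$ has no edges then $\lambda_1=0$ and the inequality holds vacuously, so I assume $E(G)\neq\emptyset$.)

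First I would record the degree of a vertex of $G^l$. As computed inside the proof of Lemma~\ref{graph-line-equal}, the vertex of $G^l$ associated with an edge $e=(u,v)\in E(G)$ has degree $d_{G^l}(e)=d_G(u)+d_G(v)-2$, so the maximum degree of $G^l$ is $\Delta(G^l)=\max\{d(u)+d(v)-2:(u,v)\in E(G)\}$. Next I would bound $\rho(A(G^l))\le\Delta(G^l)$ using the standard fact that the spectral radius of the adjacency matrix of any graph is at most its maximum degree: taking a nonnegative eigenvector $y$ for $\rho(A(G^l))$ (available since $A(G^l)$ is symmetric and nonnegative) and evaluating the eigenvalue equation at a coordinate where $y$ is maximal forces $\rho(A(G^l))$ to be at most the degree of that vertex, hence at most $\Delta(G^l)$. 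Combining the two estimates gives
\[
\lambda_1(G)\le 2+\rho(A(G^l))\le 2+\Delta(G^l)=\max_{(u,v)\in E(G)}\bigl(d(u)+d(v)\bigr),
\]
which is exactly~(\ref{anderson1985-1}).

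I expect no genuine obstacle here: the argument is a short composition of the line-graph inequality with the elementary bound $\rho\le\Delta$, and the only care needed is the edgeless degenerate case and a clean statement of the $\rho(A)\le\Delta$ estimate. If one prefers a self-contained route that does not quote~(\ref{graph-line-eig}), the same bound drops out directly from~(\ref{graph-adj}): since $L(G)=Q(G)Q(G)^T$ and $QQ^T$, $Q^TQ$ have the same nonzero eigenvalues, $\lambda_1(G)=\lambda_1(Q^TQ)\le\rho(Q^TQ)\le\rho(|Q^TQ|)$, and the row of $|Q^TQ|=2I+A(G^l)$ indexed by $e=(u,v)$ sums to $2+(d(u)-1)+(d(v)-1)=d(u)+d(v)$; the standard estimate $\rho(B)\le\max_i\sum_j|B_{ij}|$ for a nonnegative matrix $B$ then yields~(\ref{anderson1985-1}) in a single step.
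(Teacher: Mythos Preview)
The paper does not actually supply a proof of Theorem~\ref{anderson1985-th}; it is stated as a citation to Anderson and Morley's original article and then immediately superseded by the sharper results that follow. Your argument is correct, and it is exactly the line-graph route the survey itself sets up in Lemma~\ref{graph-line} and then exploits for the later refinements (Theorems~\ref{li1997-th}, \ref{li-zhang-new}, \ref{lizhang1998-th}, \ref{zhang2004-th}): pass from $\lambda_1(G)$ to $2+\rho(A(G^l))$ and bound the latter by the maximum degree (equivalently, the maximum row sum) of the line graph. Your alternative self-contained route via $|Q^TQ|=2I+A(G^l)$ and the row-sum bound is equally valid and is in fact how inequality~(\ref{graph-line-eig}) is typically justified in the references the survey cites.
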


In 1997, this result  was improved by Li and Zhang \cite{li1997}.
Their  main result is as follows:

\begin{theorem}(\cite{li1997})\label{li1997-th}Let $G$ be a simple graph. Denote
by $r=\max\{d(u)+d(v)|(u,v)\in E(G)\}$ and $s=\max\{d(u)+d(v)|
(u,v)\in E(G)-(x,y)\}$ with $(x,y)\in E(G)$ such that
$d(x)+d(y)=r$. Then
\begin{equation}\label{li1997-1}
\lambda(G)\le 2+\sqrt{(r-2)(s-2)},
\end{equation}
\end{theorem}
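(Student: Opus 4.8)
The plan is to reduce the bound on $\lambda_1(G)$ to a bound on the adjacency spectral radius $\rho(A(G^l))$ of the line graph, exploiting the inequality $\lambda_1(G)\le 2+\rho(A(G^l))$ already supplied by Lemma~\ref{graph-line}. It then suffices to prove $\rho(A(G^l))\le\sqrt{(r-2)(s-2)}$. The argument hinges on translating the degree data of $G$ into degree data of $G^l$: an edge $e=(u,v)$ of $G$, viewed as a vertex of $G^l$, has degree $d_{G^l}(e)=d_G(u)+d_G(v)-2$, since (as $G$ is simple) its $G^l$-neighbours are exactly the $d_G(u)-1$ other edges at $u$ together with the $d_G(v)-1$ other edges at $v$.

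First I would establish a general spectral-radius estimate valid for any graph $H$ with adjacency matrix $A$, namely $\rho(A)\le\max_{pq\in E(H)}\sqrt{d_H(p)\,d_H(q)}$. To prove it, let $\mu=\rho(A)$ and let $f\ge 0$ be a Perron eigenvector, whose existence follows from the Perron--Frobenius theorem applied to the nonnegative symmetric matrix $A$; the case $\mu=0$ being trivial, assume $\mu>0$. Choose a vertex $p$ with $f_p=\max_u f_u>0$, and a neighbour $q$ of $p$ with $f_q=\max_{v\sim p}f_v$, noting that the eigen-equation at $p$ forces $f_q>0$. Then $\mu f_p=\sum_{v\sim p}f_v\le d_H(p)\,f_q$ and $\mu f_q=\sum_{w\sim q}f_w\le d_H(q)\,f_p$; multiplying these and cancelling the positive factor $f_pf_q$ yields $\mu^2\le d_H(p)d_H(q)$ with $pq\in E(H)$, as claimed.

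Applying this to $H=G^l$ reduces the theorem to the combinatorial claim that $d_{G^l}(e)\,d_{G^l}(e')\le(r-2)(s-2)$ for every edge $\{e,e'\}$ of $G^l$, that is, for every pair of distinct edges $e,e'$ of $G$ sharing a vertex. Let $e^*=(x,y)$ be the distinguished edge with $d(x)+d(y)=r$, so $d_{G^l}(e^*)=r-2$. By the definition of $s$, every edge $f=(a,b)\ne e^*$ of $G$ satisfies $d(a)+d(b)\le s$, hence $d_{G^l}(f)\le s-2$. Since $e\ne e'$, at most one of them equals $e^*$: if one of them is $e^*$, its line-graph degree is $r-2$ and the other's is at most $s-2$, giving product $\le(r-2)(s-2)$; if neither is $e^*$, both line-graph degrees are at most $s-2$, so the product is $\le(s-2)^2\le(r-2)(s-2)$ because $s\le r$. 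In either case the claim holds, and combining the three ingredients gives $\lambda_1(G)\le 2+\sqrt{(r-2)(s-2)}$.

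The main obstacle is not the spectral estimate but getting the combinatorial bookkeeping exactly right so that the factor $(r-2)(s-2)$ appears rather than the weaker $(r-2)^2$, which would merely reproduce Anderson--Morley's Theorem~\ref{anderson1985-th}. The decisive points are that $s$ bounds the line-graph degree of \emph{every} edge other than the single distinguished edge $e^*$, and that $e^*$ can be an endpoint of at most one end of any given edge of $G^l$. One should also dispose of the degenerate cases (for instance $G$ having fewer than two edges, where $s$ is ill-defined, or $r=s=2$) separately, and observe that because $A(G^l)$ may be reducible the Perron argument is applied to a global maximiser of the nonnegative eigenvector, which makes any assumption of connectivity of $G$ unnecessary.
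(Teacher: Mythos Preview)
Your proposal is correct and follows essentially the same route the paper uses for this circle of results: pass to the line graph via Lemma~\ref{graph-line}, invoke the adjacency bound $\rho(A(H))\le\max_{pq\in E(H)}\sqrt{d_H(p)d_H(q)}$ (which the paper quotes as Lemma~2.1 of \cite{berman2001} and you prove directly by the standard two-vertex Perron argument), and then translate back using $d_{G^l}(e_{uv})=d(u)+d(v)-2$. The paper does not actually reprove Theorem~\ref{li1997-th} itself (it is simply cited from \cite{li1997}), but your combinatorial step---that among two adjacent line-graph vertices at most one can be the distinguished edge $e^*$, so the degree product is bounded by $(r-2)(s-2)$---is exactly the refinement needed over Anderson--Morley and matches the method behind the paper's own Theorem~\ref{li-zhang-new}.
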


Pan in \cite{pan2002} gave the necessary and sufficient conditions
for the holding of  equality in (\ref{li1997-1}) .  In fact this
result may further be improved. We can state as follows:

\begin{theorem}\label{li-zhang-new}Let $G$ be a simple  connected graph.
Then
\begin{equation}\label{lizhang-new-1}
\lambda(G)\le 2+\max\left\{\sqrt{(d(u)+d(v)-2)(d(u)+d(w)-2)}\
\right\},
\end{equation}
where the maximum is taken over all pairs $(u,v), (u,w)\in E(G)$.
Moreover, equality holds in (\ref{lizhang-new-1}) if and only if
$G$ is  regular bipartite graph or a semiregular graph, or a path
of order four.
\end{theorem}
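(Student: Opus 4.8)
The plan is to combine Lemma~\ref{graph-line} with a sharp upper bound for the adjacency spectral radius of the line graph $G^l$. Inequality (\ref{graph-line-eig}) already gives $\lambda_1(G)\le 2+\rho(A(G^l))$, with equality if and only if $G$ is bipartite, so the entire problem reduces to estimating $\rho(A(G^l))$ in terms of the degrees of $G$ and then tracking when both inequalities become equalities.

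The key auxiliary estimate I would establish is that for any connected graph $H$,
$$\rho(A(H))\le \max_{ef\in E(H)}\sqrt{d_H(e)\,d_H(f)},$$
with equality if and only if $H$ is regular or bipartite semiregular. The cleanest route is to observe that $A(H)$ is similar to $B=D^{-1/2}A(H)D^{1/2}$, where $D$ is the diagonal degree matrix of $H$; the $i$-th row sum of $B$ equals $d_H(i)^{-1/2}\sum_{j\sim i}\sqrt{d_H(j)}\le \max_{j\sim i}\sqrt{d_H(i)d_H(j)}$. Since the spectral radius of a nonnegative matrix is at most its largest row sum, the bound follows, and connectedness (hence irreducibility) lets the equality analysis proceed through the standard fact that equality forces all row sums of $B$ to coincide and every vertex to have neighbours of a single degree.

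Next I would transport this to $G^l$. Each vertex of $G^l$ is an edge $(u,v)$ of $G$ with $d_{G^l}((u,v))=d_G(u)+d_G(v)-2$, and two such vertices are adjacent precisely when the corresponding edges of $G$ share an endpoint, i.e.\ they have the form $(u,v)$ and $(u,w)$. Feeding these degrees into the adjacency bound gives
$$\rho(A(G^l))\le \max\sqrt{(d(u)+d(v)-2)(d(u)+d(w)-2)}$$
over all adjacent pairs $(u,v),(u,w)\in E(G)$, and combined with $\lambda_1(G)\le 2+\rho(A(G^l))$ this is exactly (\ref{lizhang-new-1}).

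For the equality claim I would demand equality in both steps at once: equality in (\ref{graph-line-eig}) forces $G$ bipartite, while equality in the adjacency bound forces $G^l$ to be regular or bipartite semiregular. By Lemma~\ref{graph-line-equal} this second condition holds exactly when $G$ is regular, semiregular, or the path $P_4$. Intersecting with bipartiteness—using that every semiregular graph is already bipartite, that a regular graph qualifies only when it is regular bipartite, and that $P_4$ is bipartite—leaves precisely the three listed families. I expect the main obstacle to be the sharp equality analysis of the adjacency bound, namely keeping the row-sum estimate and the Perron-vector condition simultaneously tight, together with the bookkeeping that for regular and for semiregular $G$ the line graph $G^l$ is in fact \emph{regular} (every edge of a semiregular graph joining the two parts has $G^l$-degree $k_1+k_2-2$), so that the correct branch of the equality condition is the one that applies.
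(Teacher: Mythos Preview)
Your proposal is correct and follows essentially the same route as the paper: bound $\lambda_1(G)$ by $2+\rho(A(G^l))$ via Lemma~\ref{graph-line}, bound $\rho(A(G^l))$ by $\max\sqrt{d_{G^l}(e)d_{G^l}(f)}$, translate the line-graph degrees into $d(u)+d(v)-2$, and then combine the bipartiteness forced by equality in (\ref{graph-line-eig}) with Lemma~\ref{graph-line-equal}. The only difference is cosmetic: where the paper invokes Lemma~2.1 of \cite{berman2001} as a black box for the adjacency bound and its equality case, you supply a direct proof via the similarity $D^{-1/2}A(H)D^{1/2}$ and the row-sum argument, and you spell out the intersection with bipartiteness more explicitly than the paper does.
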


\begin{proof} For each edge $e_{uv}=(u,v)\in E(G)$,  the
degree $d(e_{uv})$ of vertex $e_{uv}$ in $G^l$ is equal to
$d_G(u)+d_G(v)-2$. By  Lemma 2.1 in \cite{berman2001},
$$\rho(G^l)\le\max\left\{\sqrt{(d(u)+d(v)-2)(d(u)+d(w)-2)}\ \right\},$$
where the maximum is taken over all pairs $(u,v), (u,w)\in E(G)$.
Hence it follows from (\ref{graph-line-eig}) in
Lemma~\ref{graph-line} that  (\ref{lizhang-new-1}) holds. Clearly,
if $G$ is regular bipartite graph or a semiregular graph, or a
path of order four, by some calculations, it is easy to argue that
equality in (\ref{lizhang-new-1}) holds. Conversely, if equality
in (\ref{lizhang-new-1}) holds, then
$$\rho(G^l)=\max\left\{\sqrt{(d(u)+d(v)-2)(d(u)+d(w)-2)}\ \right\},$$
where the maximum is taken over all pairs $(u,v), (u,w)\in E(G)$.
By Lemma 2.1 in \cite{berman2001}, $G^l$ is regular  or
semiregular.  Consequently it follows from
Lemma~\ref{graph-line-equal} that $G$ is  regular bipartite graph
or a semiregular graph, or a path of order four.
\end{proof}
  We notice that Theorem~\ref{li-zhang-new} is  a new result and better than
 Theorems~\ref{anderson1985-th} and \ref{li1997-th}.   In 2002,   Shu, Hong and Wen \cite{shu2002} gave
an upper bound in terms of
 degree sequences.
 \begin{theorem}(\cite{shu2002})\label{shu2002-th}
 Let $G$ be a simple graph.
 Assume that the degree sequence of $G$ is
 $d_1\ge d_2\ge \cdots\ge d_n$. Then
 \begin{equation}\label{shu2002-1}
\lambda_1(G)\le
d_n+\frac{1}{2}+\sqrt{(d_n-\frac{1}{2})^{2}+\sum_{i=1}^{n}d_i(d_i-d_n)}
\end{equation}
with equality  if and only if $G$ is a regular bipartite graph.
\end{theorem}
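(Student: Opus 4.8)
The plan is to translate the problem into a bound on the adjacency spectral radius of the line graph and then apply Hong's sharp degree-based bound. By (\ref{graph-line-eig}) of Lemma~\ref{graph-line} one has $\lambda_1(G)\le 2+\rho(A(G^l))$, with equality if and only if $G$ is bipartite, so it suffices to estimate $\rho(A(G^l))$. The parameters of $H:=G^l$ are read off directly: $H$ has $N:=m=|E(G)|$ vertices, the vertex of $H$ coming from an edge $uv$ of $G$ has degree $d(u)+d(v)-2$, and hence $2M:=2|E(H)|=\sum_{uv\in E}(d(u)+d(v)-2)=\sum_{i=1}^n d_i^2-2m$, while $\delta(H)=\min_{uv\in E}(d(u)+d(v)-2)\ge 2d_n-2$ since every edge joins two vertices of degree at least $d_n$. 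I assume throughout that $G$ is connected with at least two edges, so $H$ is connected with $\delta(H)\ge 1$; the cases $G=K_2$ and disconnected $G$ are checked directly.

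The key input I would invoke is Hong's inequality: for a connected graph $H$ on $N$ vertices with $M$ edges and minimum degree $\delta(H)$,
\[
\rho(A(H))\le g(\delta(H)),\qquad g(x):=\frac{1}{2}\Bigl(x-1+\sqrt{(x+1)^2+4(2M-Nx)}\Bigr).
\]
Applied to $H=G^l$ this gives $\rho(A(G^l))\le g(\delta(H))$. The next step is a monotonicity observation: a short computation with $g'$ shows that $g$ is strictly decreasing on $[0,N-1]$ when $M<\frac{1}{2}N(N-1)$ and is constant when $M=\frac{1}{2}N(N-1)$; in either case, since $\delta(H)\ge 2d_n-2$, one gets $g(\delta(H))\le g(2d_n-2)$. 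Finally, substituting $2M=\sum_i d_i^2-2m$ and $N=m$ one finds $2M-N(2d_n-2)=\sum_i d_i^2-2md_n=\sum_i d_i(d_i-d_n)$ and $(x+1)^2\big|_{x=2d_n-2}=(2d_n-1)^2$, so that
\[
2+g(2d_n-2)=d_n+\frac{1}{2}+\sqrt{\Bigl(d_n-\tfrac{1}{2}\Bigr)^2+\sum_{i=1}^n d_i(d_i-d_n)},
\]
which is exactly the right-hand side of (\ref{shu2002-1}). Chaining $\lambda_1(G)\le 2+\rho(A(G^l))\le 2+g(\delta(H))\le 2+g(2d_n-2)$ proves the inequality.

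The hard part, and the step I would approach most carefully, is the equality discussion. Equality in (\ref{shu2002-1}) forces every link of the chain to be tight: $G$ must be bipartite, Hong's bound must be tight for $G^l$, and the monotonicity step must be tight. By the computation above, the monotonicity step is tight precisely when either $\delta(H)=2d_n-2$ (there is an edge of $G$ joining two minimum-degree vertices) or $G^l$ is complete, and this dichotomy is the crux. In the complete-line-graph branch, $G^l=K_m$ forces any two edges of $G$ to meet, so $G$ is a star or a triangle; bipartiteness selects the stars $K_{1,t}$, and one verifies directly that $\lambda_1(K_{1,t})=t+1$ does attain the right-hand side of (\ref{shu2002-1}) (here $g$ is constant, so tightness occurs even though $\delta(H)=t-1\neq 0=2d_n-2$). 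In the other branch, one combines $\delta(H)=2d_n-2$ with bipartiteness and the equality case of Hong's bound, using Lemma~\ref{graph-line-equal} (a regular or semiregular line graph forces $G$ to be regular, semiregular, or $P_4$) and then discarding the nonregular semiregular graphs and $P_4$, which violate $\delta(H)=2d_n-2$; this leaves exactly the regular bipartite graphs. Thus the analysis actually yields that equality holds if and only if $G$ is a regular bipartite graph \emph{or a star}: the displayed condition omits the stars, and the main obstacle is precisely that the complete-line-graph case must be singled out rather than read off from $\delta(H)=2d_n-2$ alone.
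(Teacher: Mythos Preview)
Your approach coincides with the paper's sketched proof: pass to the line graph via Lemma~\ref{graph-line} and apply the Hong--Shu--Fang bound \cite{hong2001}, then substitute the line-graph parameters. Your equality analysis is in fact more careful than what the survey records, and your observation that the stars $K_{1,t}$ also achieve equality (so that the stated characterization is incomplete) is correct.
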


{\bf Sketch of} \begin{proof} With the aid of the result in
\cite{hong2001} and Lemma~\ref{graph-line}, it not difficult to
argue with some calculations that (\ref{shu2002-1}) holds.
\end{proof}

 Das in \cite{das2005} also  gave  several related upper bounds
 for the largest Laplacian eigenvalue in terms of degree sequence.

\subsection{The upper bounds
versus  the average 2-degree}

Let $G$ be a simple graph. Denote by $m(v)$ the average of the
degrees of the vertices adjacent to $v$.  Then $d(v)m(v)$ is the
$"2-degree"$ of vertex $v$.  In 1998, Merris \cite{Merris1998a}
used another approach method to provide another upper bound:

\begin{theorem}(\cite{Merris1998a})\label{merris1998-th}
Let $G$ be a simple graph. Then
\begin{equation}\label{merris1998-1}
\lambda_1(G)\le \max\{ d(v)+m(v)\ |\ v\in V(G)\}.
\end{equation}
\end{theorem}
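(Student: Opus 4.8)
The plan is to route the bound through the signless (unoriented) Laplacian $K(G)=D(G)+A(G)$, which is a \emph{nonnegative} matrix and therefore amenable to row-sum estimates for its spectral radius. The entry point is inequality (\ref{graph-line-eig}) of Lemma~\ref{graph-line}, which already supplies $\lambda_1(G)\le\rho(K(G))$. So it suffices to prove the purely matrix-theoretic statement $\rho(K(G))\le\max\{d(v)+m(v)\mid v\in V(G)\}$, and the Laplacian never re-enters the argument after this first reduction.

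The key device is a diagonal similarity. After reducing to the case where $G$ has no isolated vertices (a degree-zero vertex contributes only a zero block to both $L(G)$ and $K(G)$ and changes neither $\lambda_1$ nor $\max_v(d(v)+m(v)\,)$, so it may be deleted), the degree matrix $D(G)$ is invertible. I would then consider $D(G)^{-1}K(G)D(G)$, which is similar to $K(G)$ and hence shares its spectrum, in particular its spectral radius. Its $(v,u)$ entry is $\frac{d(u)}{d(v)}K_{vu}$, so the diagonal term of row $v$ contributes $\frac{1}{d(v)}\,d(v)\,d(v)=d(v)$, while the off-diagonal terms contribute $\frac{1}{d(v)}\sum_{u\sim v}d(u)=m(v)$, using the defining identity $d(v)m(v)=\sum_{u\sim v}d(u)$. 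Thus every row sum equals $d(v)+m(v)$.

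To close, I would invoke the elementary fact that the spectral radius of a nonnegative matrix is at most its maximal row sum (equivalently $\rho(M)\le\|M\|_\infty$), giving $\rho(K(G))=\rho\bigl(D(G)^{-1}K(G)D(G)\bigr)\le\max_v\{d(v)+m(v)\}$; combined with (\ref{graph-line-eig}) this yields (\ref{merris1998-1}). The only genuinely delicate points are the careful bookkeeping that the row sum of the similarity transform is \emph{exactly} $d(v)+m(v)$ and the handling of the degree-zero case where $D(G)^{-1}$ is undefined; both the similarity step and the row-sum bound are otherwise routine.
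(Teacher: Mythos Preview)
Your proof is correct. Note, however, that the paper does not actually supply its own proof of this theorem; it is simply quoted from \cite{Merris1998a} as background. The paper's nearest argument is the sketch for the subsequent Li--Zhang bound (Theorem~\ref{lizhang1998-th}), which works through the \emph{line graph}: one performs a diagonal similarity on $2I+A(G^l)$ (with the diagonal $P$ of edge-degrees shifted by $2$), bounds $\rho(2I+A(G^l))$ by its maximal row sum, and then invokes (\ref{graph-line-eig}). That route yields the sharper inequality (\ref{li1998-1}), from which (\ref{merris1998-1}) follows by the short comparison the paper makes right after that proof. Your argument is more direct for Merris' bound itself: you stay on the original graph, apply the similarity $D(G)^{-1}K(G)D(G)$, and read off the row sums $d(v)+m(v)$ immediately. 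This is essentially Merris' original idea and is the cleanest proof of the statement as written; the line-graph machinery the paper deploys is what is needed to push on to the Li--Zhang refinement, but is unnecessary overhead for (\ref{merris1998-1}) alone.
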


 We observed that Merris' bound (\ref{merris1998-1}) was only involved in
one vertex, while Li and Zhang's bound (\ref{li1997-1}) was
involved in the adjacent vertices. It was natural to stimulate us
to consider whether there was  an better upper bound  than Merris'
upper bound for graphs with the adjacent relations.   Li and Zhang
in \cite{li1998} followed this idea and obtained an better upper
bound. Later Pan in \cite{pan2002} characterized equality
situation.

\begin{theorem}(\cite{li1998},\cite{pan2002})\label{lizhang1998-th}
Let $G$ be a simple graph. Then
\begin{equation}\label{li1998-1}
\lambda_1(G)\le \max\left\{
\frac{d(u)(d(u)+m(u))+d(v)(d(v)+m(v))}{d(u)+d(v)}\ :\
 (u,v)\in E(G)\ \right\}.
  \end{equation}
  If $G$ is connected, then equality in (\ref{li1998-1}) holds
  if and only if $G$ is regular bipartite or semiregular.
 \end{theorem}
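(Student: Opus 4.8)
The plan is to reduce the bound to a spectral-radius estimate for a nonnegative matrix supported on the edges of $G$, and then apply a Collatz--Wielandt (Rayleigh-type) inequality with a carefully chosen positive test vector. By (\ref{graph-line-eig}) of Lemma~\ref{graph-line} we have $\lambda_1(G)\le\rho(K(G))$, and since $K(G)=|Q(G)||Q(G)^T|$ shares its nonzero spectrum with $|Q(G)^T||Q(G)|=2I+A(G^l)$, we get $\rho(K(G))=\rho(2I+A(G^l))$. For any nonnegative matrix $B$ and any positive vector $y$ one has $\rho(B)\le\max_i (By)_i/y_i$, so I would take $B=2I+A(G^l)$ on the edge space and the test vector $y$ given by $y_{e}=d(u)+d(v)$ for each edge $e=(u,v)$.

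The heart of the argument is the evaluation of $(By)_e$. Since the neighbours of $e=(u,v)$ in $G^l$ are the edges $(u,w)$ with $w\sim u$, $w\ne v$ together with the edges $(v,w)$ with $w\sim v$, $w\ne u$, a direct summation gives $(By)_e=d(u)(d(u)+m(u))+d(v)(d(v)+m(v))$, so that $(By)_e/y_e$ is exactly the expression inside the maximum in (\ref{li1998-1}). Combining $\lambda_1(G)\le\rho(K(G))=\rho(B)\le\max_e (By)_e/y_e$ then yields the inequality for every simple graph, the case of no edges being trivial.

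For the equality statement, with $G$ connected, the chain collapses to equality iff both links are tight. First, $\lambda_1(G)=\rho(K(G))$, which by Lemma~\ref{graph-line} forces $G$ to be bipartite. Second, equality in the Collatz--Wielandt bound for the irreducible matrix $B$, which forces $y$ to be its Perron vector, i.e. $d(u)(d(u)+m(u))+d(v)(d(v)+m(v))=\rho(d(u)+d(v))$ on every edge, where $\rho=\lambda_1(G)$. The ``if'' direction is immediate: if $G$ is regular bipartite or semiregular, all edges have the same endpoint-degree-sum, so $y$ is constant and $G^l$ is regular (by the edge-degree formula and Lemma~\ref{graph-line-equal}), whence $y$ is indeed the Perron vector.

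The delicate direction, which I expect to be the main obstacle, is to deduce regularity or semiregularity from the edge identity above; in particular one must rule out $y$ being a \emph{non-constant} Perron vector. Writing $h(w)=d(w)(d(w)+m(w)-\rho)$, the identity reads $h(u)+h(v)=0$ on every edge, and traversing walks in the connected graph forces $|h|$ to be constant, say $h\equiv c$ on one class $X$ and $-c$ on the other class $Y$ of the bipartition. To force $c=0$ I plan to use the Rayleigh quotient of the degree vector $d$: a short computation gives $d^{T}K(G)d=\sum_w d(w)^2(d(w)+m(w))=\rho\,d^{T}d+\sum_w d(w)h(w)$, and $\sum_w d(w)h(w)=c\bigl(\sum_{w\in X}d(w)-\sum_{w\in Y}d(w)\bigr)=0$ because the two parts of a bipartite graph carry equal degree-sums. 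Hence $d^{T}K(G)d=\rho\,d^{T}d$ with $\rho=\lambda_{\max}(K(G))$, so by the Rayleigh--Ritz characterization (\ref{max-eig}) the vector $d$ must itself be a $\rho$-eigenvector of $K(G)$; this gives $h\equiv 0$, i.e. $d(w)+m(w)=\rho$ for all $w$. A final extremal argument then closes the proof: taking vertices of maximum degree $\Delta$ and minimum degree $\delta$ and using $\delta\le m(w)\le\Delta$ forces $\rho=\Delta+\delta$, hence every neighbour of a maximum-degree vertex has degree $\delta$ and vice versa, while connectedness excludes intermediate degrees. Thus $G$ is regular when $\Delta=\delta$ and semiregular otherwise, which together with bipartiteness is precisely the asserted equality case.
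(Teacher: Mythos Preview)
Your inequality argument is essentially the paper's: using the test vector $y_e=d(u)+d(v)$ in the Collatz--Wielandt bound for $B=2I+A(G^l)$ is exactly the same computation as taking row sums of the similar matrix $P^{-1}(2I+A(G^l))P$ with $P=\mathrm{diag}(d(u)+d(v))$, which is what the sketch in the paper does. The paper omits the equality analysis entirely, and your treatment of it is correct; the Rayleigh-quotient step---using the bipartite degree-sum identity to get $d^{T}K(G)d=\rho\,d^{T}d$, hence $K(G)d=\rho d$ and $d(w)+m(w)\equiv\rho$---is a clean device, after which the extremal propagation from a $\Delta$-vertex (all neighbours forced to degree $\delta$, then all of theirs to degree $\Delta$, etc.) together with connectedness and bipartiteness indeed yields regularity or semiregularity.
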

{\bf Sketch of }\begin{proof} Let $P$ be sum of the degree
diagonal matrix of the line graph $G^l$ of  a graph $G$ and two
multiple of the identity matrix.   Let
$$N=P^{-1}(2I+A(G^l))P^{-1}.$$
If $e_{uv}=(u,v)$ is an edge of $G$, then $e_{uv}$ is an vertex of
$G^l$ and the corresponding row sum of $N$ is equal to
$$\frac{\sum_{x\sim u}(d(x)+d(u))+\sum_{y\sim v}(d(y)+d(v))}{d(u)+d(v)}
=\frac{d(u)(d(u)+m(u))+d(v)(d(v)+m(v))}{d(u)+d(v)},$$ where $u\sim
 v$ mean that $u$ and $v$ in $G$ are adjacent. Hence
$$\rho(N)\le \max\left\{
\frac{d(u)(d(u)+m(u))+d(v)(d(v)+m(v))}{d(u)+d(v)}\ :\
 (u,v)\in E(G)\ \right\}.$$
 On the other hand,
 by Lemma~\ref{graph-line}, we have
 $$\lambda_1(G)\le \rho(2I+A(G^l))=\rho(N).$$
 Therefore (\ref{li1998-1}) holds. For the equality situation, the proof is omitted.
\end{proof}
Denote by $t=\max\{ d(v)+m(v)\ |\ v\in V(G)\}$. It is obvious that
(\ref{li1998-1}) is better than (\ref{merris1998-1}),  since
$$
\max\left\{ \frac{d(u)(d(u)+m(u))+d(v)(d(v)+m(v))}{d(u)+d(v)} :
 (u,v)\in E(G)\right\}\le \frac{d(u)t+d(v)t}{d(u)+d(v)}=t.$$
By a similar method, we could get another two upper bounds
\begin{theorem}(\cite{zhang2004})\label{zhang2004-th} Let $G$ be
a simple connected graph. Denote by $t(u)=d(u)+m(u)$. Then
\begin{equation}\label{zhang2004-1}
\lambda_1(G)\le\max\left\{2+\sqrt{(d(u)(t(u)-4) + d(v)(t(v)-4)+4}\
\right\}
\end{equation}
and
\begin{equation}\label{zhang2004-2}
\lambda_1(G)\le\max\left\{\sqrt{d(u)t(u) + d(v)t(v)}\ \right\},
\end{equation}
 where the maximum is taken over all pairs $(u,v)\in E(G).$
Moreover,  equality  in (\ref{zhang2004-1})  holds if and only if
$G $ is bipartite regular or semi-regular, or a path of order
four. Equality in (\ref{zhang2004-2}) holds  if and only if $G $
is bipartite regular or semi-regular.
\end{theorem}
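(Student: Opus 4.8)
The plan is to reuse the line-graph reduction behind Theorems~\ref{li-zhang-new} and \ref{lizhang1998-th}, but to estimate the spectral radius of the relevant nonnegative matrix through the row sums of its \emph{square} rather than through adjacent-pair products; this is exactly what produces the square roots in (\ref{zhang2004-1}) and (\ref{zhang2004-2}). Write $B=A(G^l)$ and $M=2I+A(G^l)$, and recall that the vertex of $G^l$ corresponding to the edge $e=(u,v)$ has degree $d(u)+d(v)-2$, so the $e$-th row sum of $M$ is $d(u)+d(v)$. By (\ref{graph-line-eig}) of Lemma~\ref{graph-line}, $\lambda_1(G)\le 2+\rho(B)=\rho(M)$, with equality precisely when $G$ is bipartite. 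Since $B$ and $M$ are symmetric and nonnegative, $\rho(M)^2=\rho(M^2)$ is at most the largest row sum of $M^2$, and likewise for $B$; the whole estimate then rests on one bookkeeping identity.

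First I would compute the $e$-th row sum of $M^2$, i.e. $(M(M\mathbf 1))_e$ with $\mathbf 1$ the all-ones vector. Using $(M\mathbf 1)_f=d(\cdot)+d(\cdot)$ for each edge $f$ and summing over the neighbours of $e=(u,v)$ in $G^l$ (the edges sharing $u$ and the edges sharing $v$), the contributions combine, via $d(u)m(u)=\sum_{w\sim u}d(w)$, to give exactly $d(u)t(u)+d(v)t(v)$. Hence $\lambda_1(G)^2\le\rho(M)^2=\rho(M^2)\le\max_e\{d(u)t(u)+d(v)t(v)\}$, which is (\ref{zhang2004-2}). For (\ref{zhang2004-1}) I would not re-sum anything: from $B=M-2I$ one gets $(B^2\mathbf 1)_e=(M^2\mathbf 1)_e-4(M\mathbf 1)_e+4=d(u)(t(u)-4)+d(v)(t(v)-4)+4$, so $\rho(B)^2$ is bounded by the maximum of this quantity, and $\lambda_1(G)\le 2+\rho(B)$ yields (\ref{zhang2004-1}).

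The substantive part is the equality discussion, which I expect to be the main obstacle. In both bounds equality forces equality in two separate steps: in $\lambda_1(G)=\rho(M)$ (equivalently $\lambda_1=2+\rho(B)$), which by Lemma~\ref{graph-line} means $G$ is bipartite; and in the row-sum estimate $\rho(N^2)=\max_e(N^2\mathbf 1)_e$ for the relevant $N$. The key tool is that for a \emph{primitive} nonnegative symmetric $N$ the matrices $N$ and $N^2$ share the Perron vector, so $(N^2\mathbf 1)_e$ is constant iff $\mathbf 1$ is that Perron vector iff $(N\mathbf 1)_e$ is constant. For (\ref{zhang2004-2}) the matrix $M=2I+A(G^l)$ has positive diagonal and is therefore primitive whenever $G^l$ is connected; hence equality needs $(M\mathbf 1)_e=d(u)+d(v)$ constant over the edges, i.e. $G^l$ regular. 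A short argument shows that $d(u)+d(v)$ is constant along the edges of a connected graph exactly when $G$ is regular or semiregular, so with bipartiteness we obtain the stated condition: $G$ regular bipartite or semiregular. For (\ref{zhang2004-1}) the matrix is $B=A(G^l)$, which is primitive only when $G^l$ is non-bipartite; there the same argument again forces $G^l$ regular. The delicate case is $G^l$ bipartite, which for connected $G$ means $G$ is a path or an even cycle; the latter are regular, so only paths need direct analysis, and inspecting $G^l=P_{n-1}$ shows sharpness only for short paths, contributing $G=P_4$. Uniformly the condition becomes ``$G$ bipartite and $G^l$ regular or semiregular'', which by Lemma~\ref{graph-line-equal} is $G$ regular bipartite, semiregular, or $P_4$. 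It is precisely the extra primitivity conferred by the $+2I$ term that forces $G^l$ to be genuinely regular in (\ref{zhang2004-2}) and so excludes $P_4$ there.
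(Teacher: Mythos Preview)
Your argument is correct and is precisely the natural realisation of the survey's hint ``by a similar method'': where the proof of Theorem~\ref{lizhang1998-th} bounds $\rho(2I+A(G^l))$ via row sums after a diagonal similarity, you instead bound $\rho(M)^2$ and $\rho(B)^2$ via the row sums of $M^2$ and $B^2$, and the key identity $(M^2\mathbf{1})_{e_{uv}}=d(u)t(u)+d(v)t(v)$ is exactly the computation needed. Since the survey gives no further details for this theorem, your approach is essentially what the paper intends; the equality analysis, including the primitivity distinction between $M$ and $B$ that explains why $P_4$ appears only in~(\ref{zhang2004-1}), is handled carefully and matches the stated characterisations.
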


\subsection{The upper bound versus eigenvectors}

 In this subsection, we use  the relationships between eigenvalues and eigenvectors to investigate the
 largest Laplacian eigenvalue.  Li and Pan in \cite{li2001} showed
 the following result.
\begin{theorem}(\cite{li2001})\label{li2001-th}
Let $G$ be a simple connected graph. Then
\begin{equation}\label{li2001-1}
\lambda_1(G)\le \max\{ \sqrt{2d(u)(d(u)+m(u))}\  |\  u\in V(G)\}
\end{equation}
with equality if and only if $G$ regular bipartite.
\end{theorem}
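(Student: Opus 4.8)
The plan is to bound $\lambda_1(G)$ by the spectral radius of the signless Laplacian $K(G)=D(G)+A(G)$ and then to estimate $\rho(K(G))$ through the row sums of $K(G)^2$. By inequality (\ref{graph-line-eig}) of Lemma~\ref{graph-line} we already have $\lambda_1(G)\le\rho(K(G))$, with equality precisely when $G$ is bipartite, and this is the only place where bipartiteness will enter. Since $K(G)=|Q(G)|\,|Q(G)^T|$ is symmetric and positive semidefinite, its eigenvalues are nonnegative and the eigenvalues of $K(G)^2$ are exactly their squares, so $\rho(K(G))^2=\rho(K(G)^2)$. As $K(G)^2$ is a nonnegative matrix, Perron--Frobenius theory gives $\rho(K(G)^2)\le\max_u\big(K(G)^2\mathbf{1}\big)_u$, the maximal row sum; in the spirit of this subsection I would realize this bound through the eigenvector by taking the Perron eigenvector $y>0$ of $K(G)$ (also a top eigenvector of $K(G)^2$), choosing a vertex $u$ with $y_u$ maximal, and reading off $\rho^2 y_u=(K(G)^2y)_u\le y_u\sum_v (K(G)^2)_{uv}$.

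The key computation is the value of this row sum. First, $K(G)\mathbf{1}=2\mathbf{d}$, where $\mathbf{d}=(d(v))_v$ is the degree vector, because each row sum of $K(G)$ equals $d(u)+d(u)=2d(u)$. Hence $K(G)^2\mathbf{1}=2K(G)\mathbf{d}$, and its $u$-th entry is $2\big(d(u)^2+\sum_{v\sim u}d(v)\big)=2d(u)(d(u)+m(u))$, using $d(u)m(u)=\sum_{v\sim u}d(v)$. Chaining the estimates yields $\lambda_1(G)^2\le\rho(K(G))^2=\rho(K(G)^2)\le\max_u 2d(u)(d(u)+m(u))$, and taking square roots gives (\ref{li2001-1}).

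For the equality discussion, equality in (\ref{li2001-1}) forces equality in both inequalities of the chain. The first, $\lambda_1(G)=\rho(K(G))$, holds iff $G$ is bipartite. The second, $\rho(K(G)^2)=\max_u\big(K(G)^2\mathbf{1}\big)_u$, holds --- since $K(G)^2$ is irreducible for connected $G$, its off-diagonal support containing every edge of $G$ --- iff all row sums of $K(G)^2$ coincide, i.e. $K(G)^2\mathbf{1}=R\mathbf{1}$ for the common value $R$. Then $\mathbf{1}$ is a positive eigenvector of $K(G)^2$, hence its Perron eigenvector; decomposing $\mathbf{1}$ in an orthonormal eigenbasis of the symmetric matrix $K(G)$ and using positive semidefiniteness together with the simplicity of $\rho(K(G))$ shows that $\mathbf{1}$ must in fact be the Perron eigenvector of $K(G)$ itself. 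Thus $K(G)\mathbf{1}=\rho(K(G))\mathbf{1}$, i.e. $2d(u)=\rho(K(G))$ for every $u$, so $G$ is regular; combined with bipartiteness this gives a regular bipartite graph. The converse is a direct check: for a $k$-regular bipartite graph $d(u)(d(u)+m(u))=2k^2$ is constant and $\lambda_1=2k=\rho(K(G))$.

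I expect the equality characterization to be the main obstacle. The inequality itself is a short computation once one passes to $K(G)$, but showing that constancy of $d(u)(d(u)+m(u))$ over a connected graph forces regularity is cleanest via the spectral passage from $K(G)^2$ back to $K(G)$ described above, rather than through an ad hoc maximum/minimum degree argument, which compares $2\delta^2$ and $2\Delta^2$ but does not obviously close.
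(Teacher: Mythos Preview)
Your proof is correct, but it takes a route different from the one the survey attributes to Li and Pan. The paper places this theorem in the subsection ``The upper bound versus eigenvectors'' and, while it does not write out the proof of Theorem~\ref{li2001-th} itself, it gives in full the proof of its sharpening Theorem~\ref{zhang2004-th3}, noting explicitly that Zhang ``followed Li and Pan's method.'' That method works directly with a unit eigenvector $x$ of $L(G)$ for $\lambda_1$: one applies Cauchy--Schwarz to $\lambda_1 x_u=\sum_{v\sim u}(x_u-x_v)$, sums over $u\in V$, swaps the order of summation to turn $\sum_u d(u)\sum_{v\sim u}x_v^2$ into $\sum_u d(u)m(u)x_u^2$, and deduces that some vertex $u$ satisfies $\lambda_1^2-2d(u)\lambda_1+d(u)^2-d(u)m(u)\le0$. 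Your argument, by contrast, passes through the signless Laplacian via $\lambda_1(G)\le\rho(K(G))$ and then bounds $\rho(K)^2$ by the maximal row sum of $K^2$; this is precisely the mechanism of Lemma~\ref{li2004-lem} in the \emph{next} subsection (``related matrices'') with $f(x)=x^2$. The paper's approach has the advantage of working directly on $L(G)$, so that in the equality case the structure of $x$ delivers bipartiteness and regularity simultaneously; your approach separates the two cleanly --- bipartiteness from $\lambda_1=\rho(K)$, regularity from constancy of the row sums of $K^2$ --- and your passage from ``$\mathbf{1}$ is the Perron vector of $K^2$'' back to ``$\mathbf{1}$ is the Perron vector of $K$'' via positive semidefiniteness and the simplicity of $\rho(K)$ is a clean way to close the equality case without an ad hoc degree argument.
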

 Zhang in \cite{zhang2004}  followed Li and Pan's method and
 improved the above result.
 \begin{theorem}(\cite{zhang2004})\label{zhang2004-th3}
 Let $G$ be a simple connected graph. Then
 \begin{equation}\label{zhang2004-3}
 \lambda_1(G)\le \max\{ d(u)+\sqrt{d(u)m(u)}\ | \ u\in V(G)\}
 \end{equation}
 with equality if and only if $G$  is bipartite regular or semiregular.
  \end{theorem}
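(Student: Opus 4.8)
The plan is to bound $\lambda_1(G)$ by the spectral radius of the signless Laplacian $K(G)=D(G)+A(G)$ and then to read off the stated expression from the row sums of a rescaled but spectrally equivalent copy of $K(G)$. By (\ref{graph-line-eig}) in Lemma~\ref{graph-line} one has $\lambda_1(G)\le \rho(K(G))$, with equality exactly when $G$ is bipartite; this reduces the whole problem to estimating $\rho(K(G))$, and it already singles out bipartiteness as one ingredient needed for equality. Since $G$ is connected, $K(G)$ is a nonnegative \emph{irreducible} matrix, so I may conjugate it by any positive diagonal matrix without disturbing its spectral radius, and I will choose that matrix to manufacture the factor $\sqrt{d(u)m(u)}$.

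Concretely, set $P=\mbox{diag}(\sqrt{d(v)}\,)$ and put $B=P^{-1}K(G)P$, so that $\rho(B)=\rho(K(G))$ and $B\ge 0$. A one-line computation gives $B_{uu}=d(u)$ and $B_{uv}=\sqrt{d(v)}/\sqrt{d(u)}$ whenever $u\sim v$, hence the $u$-th row sum of $B$ equals $d(u)+\frac{1}{\sqrt{d(u)}}\sum_{v\sim u}\sqrt{d(v)}$. Now Cauchy--Schwarz yields $\sum_{v\sim u}\sqrt{d(v)}\le\sqrt{d(u)}\,\sqrt{\sum_{v\sim u}d(v)}=\sqrt{d(u)}\,\sqrt{d(u)m(u)}$, since $\sum_{v\sim u}1=d(u)$ and $\sum_{v\sim u}d(v)=d(u)m(u)$, so each row sum of $B$ is at most $d(u)+\sqrt{d(u)\,m(u)}$. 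As the spectral radius of a nonnegative matrix never exceeds its largest row sum (see \cite{horn1985}), I conclude
$$\lambda_1(G)\le\rho(K(G))=\rho(B)\le\max_{u\in V(G)}\left(d(u)+\sqrt{d(u)\,m(u)}\,\right),$$
which is (\ref{zhang2004-3}).

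For the equality characterization I would trace tightness back through the three inequalities, and this is where I expect the genuine work to sit. Equality forces simultaneously: (i) $\lambda_1(G)=\rho(K(G))$, hence $G$ bipartite by Lemma~\ref{graph-line}; (ii) $\rho(B)$ equal to the largest row sum, which for an irreducible nonnegative matrix happens only when its Perron vector is constant, i.e.\ all row sums of $B$ coincide; and (iii) equality in Cauchy--Schwarz at every vertex attaining the maximum, i.e.\ all neighbors of such a vertex share one common degree. The main obstacle is to combine (ii) and (iii) correctly. Condition (iii), propagated along the connected graph, already restricts the degrees to a semiregular (or regular) two-class pattern, and one checks directly that the bipartite regular case meets all of (i)--(iii). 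The delicate point is reconciling (ii) with (iii): condition (iii) alone admits the whole semiregular family, whereas the equal-row-sum requirement (ii), namely that $d(u)+\frac{1}{\sqrt{d(u)}}\sum_{v\sim u}\sqrt{d(v)}$ be the same at every vertex, is more restrictive, so I would settle which semiregular graphs actually survive both by direct degree bookkeeping against $d(u)+\sqrt{d(u)m(u)}$. Pinning down this reconciliation is the step I expect to be the real obstacle; the inequality itself is routine.
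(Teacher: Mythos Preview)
Your inequality argument is correct but takes a genuinely different route from the paper. The paper works directly with a unit eigenvector $x$ of $L(G)$: from $\lambda_1 x_u=\sum_{v\sim u}(x_u-x_v)$ and Cauchy--Schwarz it obtains $\lambda_1^2x_u^2\le d(u)\sum_{v\sim u}(x_u-x_v)^2$, expands, sums over all $u$ using $\sum_u d(u)\sum_{v\sim u}x_v^2=\sum_u d(u)m(u)x_u^2$, and arrives at $\sum_u\bigl(\lambda_1^2-2d(u)\lambda_1+d(u)^2-d(u)m(u)\bigr)x_u^2\le 0$, so some vertex satisfies the quadratic inequality and (\ref{zhang2004-3}) follows. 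Your approach --- pass to $K(G)$, conjugate by $\mathrm{diag}(\sqrt{d(v)})$, and bound row sums via Cauchy--Schwarz --- is more in the spirit of the ``related matrices'' technique of \S3.4 than the eigenvector technique of \S3.3 where the paper places this theorem; it is slicker for the bare inequality but introduces the extra slack $\lambda_1(G)\le\rho(K(G))$ at the outset, which then has to be closed separately in the equality discussion.

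Your hesitation about the equality clause is well founded, and in fact it detects an error in the statement as printed. For a connected bipartite semiregular graph with side degrees $p>q$ one has $\lambda_1(G)=\rho(K(G))=p+q$, whereas $\max_u\bigl(d(u)+\sqrt{d(u)m(u)}\bigr)=p+\sqrt{pq}>p+q$; concretely, for $K_{2,3}$ the eigenvalue is $5$ but the bound is $3+\sqrt{6}$. Thus non-regular semiregular graphs do \emph{not} achieve equality, and your condition (ii) --- that all row sums of $B$ coincide --- correctly rules them out. The paper's proof verifies carefully only the ``only if'' direction and dispatches the converse with ``it is easy to see \ldots\ by a simply calculation''; that calculation fails in the strictly semiregular case. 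If you finish the degree bookkeeping you outlined, you will land on ``bipartite regular'' as the full characterization, not the one stated.
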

\begin{proof}
 Let $x=(x_v, v\in
 V(G))^T$ be an eigenvector with $||x||_2=1$ corresponding to
 $\lambda(G)$. Thus $L(G)x=\lambda_1(G)x$. Hence for any $u\in
 V(G)$,
 $$\lambda_1(G)x_u=d(u)x_u-\sum_{v\in V(G)}a_{uv}x_v=\sum_{(u,v)\in
 E(G)}(x_u-x_v).$$
  By the Cauchy-Schwarz inequality, we have
 \begin{eqnarray*}
 \lambda_1(G)^2x_u^2 &\le & (\sum_{(u,v)\in E(G)}1^2)( \sum_{(u,v)\in
 E(G)}(x_u-x_v)^2)\\
 &=& d(u)^2x_u^2+2d(u)x_u^2(\lambda_1(G)-d(u))+d(u)\sum_{(u,v)\in E(G)}
 x_v^2.
 \end{eqnarray*}
 Hence
\begin{eqnarray*}
 \sum_{u\in V(G)}\lambda_1(G)^2x_u^2
 &\le & \sum_{u\in
 V(G)}(2d(u)\lambda_1(G)-d(u)^2)x_u^2+\sum_{u\in
 V(G)}d(u)\sum_{(u,v)\in E(G)}x_v^2\\
&=& \sum_{u\in
 V(G)}(2d(u)\lambda_1(G)-d(u)^2)x_u^2+\sum_{u\in
 V(G)}d(u)m(u)x_u^2.
 \end{eqnarray*}
  Therefore,
   we have
   $$\sum_{u\in
   V(G)}(\lambda_1(G)^2-2d(u)\lambda_1(G)+d(u)^2-d(u)m(u))x_u^2\le
   0.$$
   Then there must exist a vertex $u$ such that
   $$\lambda_1(G)^2-2d(u)\lambda_1(G)+d(u)^2-d(u)m(u)\le
   0,$$
 which implies
$\lambda_1(G)\le d(u)+\sqrt{d(u)m(u)}.$ it follows that
 (\ref{zhang2004-3}) holds.

If  $G$ is bipartite regular or semi-regular, it is  easy  to see
that equality in (\ref{zhang2004-3}) holds by a simply
calculation.

Conversely, if equality in (\ref{zhang2004-3}) holds, it follows
from the  above     proof  that for each $u\in V(G), (u,v)\in
E(G), (u,w)\in E(G)$, we have $x_u-x_v=x_u-x_w,$ which implies
that all $x_v$ are equal for all vertices adjacent to vertex $u$.
 Fixed a vertex $w\in V(G)$, we may define that $V_1(G)=\{v\in
V(G)|$ the distance between $v$ and $w$ is even $\}$ and
$V_2(G)=\{v\in V(G)|$ the distance between $v$ and $w$ is odd
$\}$. Clearly, $V_1$ and $V_2$ are a partition of $V(G)$. Since
$G$ is connected, it is not difficult to see that all $x_v$ are
equal for any $v\in V_1$ and denoted by $a$, and that all $x_v$
are equal for any $v\in V_2$ and denoted by $b$. We claim that $G$
is bipartite. In fact, if there exists an edge $(u_1,u_2)\in
E(G)$, where $u_1,u_2\in V_1$ or $u_1,u_2\in V_2$, then $a=b$.
Hence $\lambda_1(G)x_w=\sum_{(v,w)\in E(G)}(x_w-x_v))=0$ which
implies $x_w=0$.  Therefore  $x=0$ and it is a contradiction.  For
any $u\in V_1$, we have $\lambda_1(G)x_u=\sum_{(v,u)\in
E(G)}(x_u-x_v))=(a-b)d(u)$, which result in
$d(u)=\frac{a\lambda_1(G)}{a-b}$ for any $u\in V_1$. Similarly,
$d(u)=\frac{-b\lambda_1(G)}{a-b}$ for any $u\in V_2$. Hence we
conclude that $G$ is regular or semi-regular.
\end{proof}
Since $ (d(u)+\sqrt{d(u)m(u)})\le 2d(u)(d(u)+m(u)),$ for any $u\in
V(G),$   we have that (\ref{zhang2004-3}) is always better than
(\ref{li2001-1}).

On the other hand, if the common neighbors of two adjacent
vertices are involved,  (\ref{li2001-1}) can be also improved. Das
in \cite{das2003} and \cite{das2004} showed the following
\begin{theorem}(\cite{das2003})\label{das2003-th}
Let $G$ be a   simple connected graph. Denote by
$$ m^{\prime}(u)=\frac{\sum_{v~u}(d(u)-|N(u)\bigcap N(v)|)}{d(u)},$$
where $v~u$ means that $v$ and $u$ are adjacent and $N(u)$ is the
set of all neighbor vertices of $u$. Then
\begin{equation}\label{das2003-3}
\lambda_1(G)\le \max\{\sqrt{2d(u)(d(u)+m^{\prime}(u))}\ | \ u\in
V(G)\}
\end{equation}
with equality if and only if $G$ bipartite regular.
\end{theorem}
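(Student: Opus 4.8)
The plan is to argue from an eigenvector of $\lambda_1(G)$, as in the proofs of Theorems~\ref{li2001-th} and \ref{zhang2004-th3}, but to extract the common-neighbour term $|N(u)\cap N(v)|$ from the \emph{signed} structure of $L(G)$. It is worth noting first that, unlike the earlier bounds, (\ref{das2003-3}) cannot be routed through the signless Laplacian $K(G)$ or the line graph via (\ref{graph-line-eig}): for $G=K_3$ one has $\rho(K(G))=4$ whereas the right-hand side of (\ref{das2003-3}) is $\sqrt{12}$, so the orientation of the edges must genuinely enter the argument. Let $x=(x_v)$ be a unit eigenvector with $L(G)x=\lambda_1 x$; then $\lambda_1 x_u=\sum_{v\sim u}(x_u-x_v)$ for every $u$, and $\sum_{(u,v)\in E(G)}(x_u-x_v)^2=\lambda_1$.

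First I would square the eigenequation and sum over $u$, writing
$$\lambda_1^2=\sum_{u}(\lambda_1 x_u)^2=\sum_u\left(\sum_{v\sim u}(x_u-x_v)\right)^2=2\lambda_1+2\sum_u\sum_{\{v,w\}\subseteq N(u)}(x_u-x_v)(x_u-x_w),$$
where the diagonal terms contribute $\sum_u\sum_{v\sim u}(x_u-x_v)^2=2\lambda_1$ and the remaining ``cherry'' sum runs over unordered pairs of neighbours of $u$. The crux is this cherry sum, and the key observation is that a cherry $v$--$u$--$w$ with $v\sim w$ closes into a triangle $uvw$ whose three oriented differences obey the cocycle relation $(x_u-x_v)+(x_v-x_w)+(x_w-x_u)=0$. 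Writing $a,b,c$ for these, the three cross terms the triangle contributes (one centred at each of $u,v,w$) sum to $-(ab+bc+ca)=\tfrac12(a^2+b^2+c^2)$, exactly \emph{half} of what the crude estimate $(x_u-x_v)(x_u-x_w)\le\tfrac12[(x_u-x_v)^2+(x_u-x_w)^2]$ would give. Bounding the open cherries crudely but evaluating each triangle by this identity, and recalling $\sum_{v\sim u}|N(u)\cap N(v)|=2t(u)$ where $t(u)$ counts the triangles through $u$, collapses the cherry sum to a weighted edge-sum and yields
$$\lambda_1^2\le\sum_{(u,v)\in E(G)}\bigl(d(u)+d(v)-|N(u)\cap N(v)|\bigr)(x_u-x_v)^2.$$

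The remaining step is to repackage this into the vertex form of (\ref{das2003-3}); I would do so by applying the Cauchy--Schwarz inequality per vertex to $\lambda_1 x_u=\sum_{v\sim u}(x_u-x_v)$ with the neighbour-count reduced by the triangle saving, together with the bookkeeping $d(u)m'(u)=d(u)^2-2t(u)$, isolating a vertex $u$ at which $\lambda_1^2\le 2d(u)(d(u)+m'(u))$. I expect this assembly to be the main obstacle: the open-cherry estimate, the triangle identity and the per-vertex Cauchy--Schwarz must be combined so that the constants collapse \emph{precisely} to $2d(u)(d(u)+m'(u))=4d(u)^2-4t(u)$, rather than to one of the several nearby bounds of a different shape that the same ingredients also produce (for instance the edge-form estimate $\lambda_1\le\max_{(u,v)\in E(G)}(d(u)+d(v)-|N(u)\cap N(v)|)$ that follows by pulling the maximum out of the weighted edge-sum, which is sharp for $K_n$ but is not (\ref{das2003-3})).

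For the equality discussion I would trace back the inequalities used. Equality in the per-vertex Cauchy--Schwarz forces $x_u-x_v$ to be constant over all $v\sim u$; by the connectedness argument in the proof of Theorem~\ref{zhang2004-th3} this makes $x$ two-valued, say $a$ on one class and $b$ on the other of a distance-parity partition. As in that proof, an edge inside a class would force $a=b$ and hence $x=0$, so $G$ is bipartite, and the eigenequation $\lambda_1 x_u=(a-b)d(u)$ then forces the degree to be constant on each class. Unlike in Theorem~\ref{zhang2004-th3}, the semiregular case is excluded here because the maximum in (\ref{das2003-3}) is over single vertices, so for classes of unequal degrees $k_1\ne k_2$ it evaluates to $2\max(k_1,k_2)>k_1+k_2=\lambda_1(G)$; thus $G$ must be regular bipartite. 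The converse is a direct check: for $k$-regular bipartite $G$ one has $t(u)=0$, so $m'(u)=d(u)=k$ and $\sqrt{2k(k+k)}=2k=\lambda_1(G)$.
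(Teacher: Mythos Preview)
The survey itself gives no proof of this theorem; it only cites Das~\cite{das2003}. So there is no in-text argument to set yours beside, and your outline must be judged on its own.

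Your derivation of the edge-weighted inequality
\[
\lambda_1^2\ \le\ \sum_{(u,v)\in E(G)}\bigl(d(u)+d(v)-|N(u)\cap N(v)|\bigr)(x_u-x_v)^2
\]
is correct; the cocycle identity on triangles is exactly the right mechanism, and the coefficient bookkeeping checks out. The equality analysis is also sound. The problem is the step you yourself flag as ``the main obstacle'', and it is a real gap rather than a routine assembly. The phrase ``Cauchy--Schwarz per vertex with the neighbour-count reduced by the triangle saving'' does not name an actual inequality: the per-vertex Cauchy--Schwarz $\bigl(\sum_{v\sim u}(x_u-x_v)\bigr)^2\le d(u)\sum_{v\sim u}(x_u-x_v)^2$ carries the factor $d(u)$ exactly, and once you apply it the triangle information is lost. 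Conversely, your triangle saving has already been consumed in producing the \emph{edge}-weighted sum above, and that sum feeds naturally into the edge-maximum bound~(\ref{das2003-1}) of Theorem~\ref{das2003-th}, not into the single-vertex maximum~(\ref{das2003-3}). There is no automatic passage from an inequality of the form $\lambda_1^2\le\sum_{e}w(e)(x_u-x_v)^2$ with a symmetric edge weight $w$ to a bound $\lambda_1^2\le\max_u f(u)$, because $(x_u-x_v)^2$ does not split as a sum of vertex quantities. In short, your proposal reaches the companion edge bound~(\ref{das2003-1}) but not~(\ref{das2003-3}); to obtain the latter you would need a different organisation that keeps a vertex structure throughout---for instance, working with $(L^2x)_u$ and bounding it against $x_u$ and the $x_v$ for $v\sim u$ before summing---rather than collapsing to an edge sum first.
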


With aid of the  relationships between the eigenvalues and
eigenvectors, we improved and generalized some equalities and
inequalities for the largest Laplacian eigenvalue. For example, in
2002, Zhang and Li \cite{zhang2002a} generalized the result for
the largest eigenvalue of mixed graphs.  In 2003, Zhang and Luo in
\cite{zhang2003c} were able to get the new upper bounds for the
Largest Laplacian eigenvalues of mixed graphs (including simple
graphs), while in 2004, Das in \cite{das2004} also obtained the
same result for simple graphs.

\begin{theorem}(\cite{das2004}, \cite{zhang2003c})
\label{zhang2003-das2004-th} Let $G$ be a simple connected graph
of order $n$. Denote by $d(u)$ and $m(u)$ the degree  and average
2-degree of the vertex $u\in V(G)$, respectively. Then
\begin{equation}\label{zhang2003-das2004-1}
\lambda_1(G)\le \displaystyle\max
\left\{\frac{d(u)+d(v)+\sqrt{(d(u)-d(v))^2+4m(u)m(v)}}{2}\ | \
(u,v)\in E(G) \right\}
\end{equation}
with equality if and only $G$ is bipartite regular or semiregular.
\end{theorem}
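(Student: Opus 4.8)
The plan is to reduce the estimate for $\lambda_1(G)=\lambda_1(L(G))$ to a spectral radius bound for the unoriented Laplacian $K(G)=D(G)+A(G)$, and then to prove the latter by a Brauer (Cassini oval) type argument carried out after a diagonal similarity that turns average neighbour degrees into deleted row sums. The point of this route is that $K(G)$ is nonnegative and (for connected $G$) irreducible, so it carries a genuinely positive eigenvector, which the Laplacian itself does not.

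First I would invoke (\ref{graph-line-eig}) of Lemma~\ref{graph-line}: since $\lambda_1(G)\le\rho(K(G))$ with equality if and only if $G$ is bipartite, it suffices to bound $\rho:=\rho(K(G))$ by the right-hand side of (\ref{zhang2003-das2004-1}). Because $G$ is connected, $K(G)$ is irreducible, so by Perron--Frobenius $\rho=\lambda_1(K(G))$ has a positive eigenvector $y$. The key device is the matrix $\widehat K=D(G)^{-1}K(G)D(G)$, which is similar to $K(G)$ (same spectrum, with positive eigenvector $w=D(G)^{-1}y$). A direct computation gives $\widehat K_{uu}=d(u)$, while the deleted $u$-th row sum of $\widehat K$ is $\sum_{v\sim u}\frac{d(v)}{d(u)}=m(u)$; thus the two quantities appearing in the theorem are precisely the diagonal entries and the deleted row sums of $\widehat K$.

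Next I would run the Brauer argument, taking care to keep the chosen pair adjacent. Let $p$ be a vertex with $w_p=\max_u w_u$, and let $q$ be a neighbour of $p$ with $w_q=\max_{v\sim p}w_v$. Reading off $\widehat K w=\rho w$ at $p$ and at $q$, and bounding $w_v\le w_q$ for $v\sim p$ and $w_v\le w_p$ for $v\sim q$, one obtains
\[
(\rho-d(p))\,w_p\le m(p)\,w_q,\qquad (\rho-d(q))\,w_q\le m(q)\,w_p .
\]
Since $w>0$, multiplying these and cancelling $w_pw_q$ yields $(\rho-d(p))(\rho-d(q))\le m(p)m(q)$ for the edge $(p,q)\in E(G)$. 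As $\rho\ge\max\{d(p),d(q)\}$ by Rayleigh's principle applied to $K(G)$, this says exactly that $\rho$ does not exceed the larger root of $(\lambda-d(p))(\lambda-d(q))=m(p)m(q)$, i.e.\ the summand of (\ref{zhang2003-das2004-1}) at $(p,q)$. Maximizing over edges and combining with the first step gives (\ref{zhang2003-das2004-1}).

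For equality I would trace the two estimates back: equality forces $\lambda_1(G)=\rho(K(G))$, hence $G$ bipartite by (\ref{graph-line-eig}), and it forces $w_v=w_q$ for every $v\sim p$ and $w_v=w_p$ for every $v\sim q$; propagating these constant-value conditions through the connected bipartite graph, exactly as in the equality analysis of Theorem~\ref{zhang2004-th3}, forces the degrees on each side of the bipartition to be constant, i.e.\ $G$ is regular bipartite or semiregular. The main obstacle is the selection step: a naive Brauer/Cassini bound only controls the two largest components of $w$, which need not be adjacent, so the whole game is the refined choice of $q$ as the largest-valued \emph{neighbour} of $p$ (and the use of $w_p$ as the global maximum at the second vertex) to guarantee that $(p,q)$ is an edge of $G$. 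The remaining delicate point is the equality characterization, where bipartiteness from (\ref{graph-line-eig}) must be combined with the constancy conditions rather than extracted from either ingredient alone.
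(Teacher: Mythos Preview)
The survey does not include a proof of this theorem; it merely states the result with citations to \cite{das2004} and \cite{zhang2003c}, so there is no in-paper argument to compare against directly.

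Your proof of the inequality is correct and is in the spirit of the eigenvector methods surveyed in that subsection: the reduction to $\rho(K(G))$ via (\ref{graph-line-eig}), the diagonal similarity $\widehat K=D^{-1}KD$ (which makes $d(u)$ and $m(u)$ the diagonal entry and deleted row sum at $u$), and the ``adjacent Brauer'' step with $q$ chosen as the maximal-$w$ \emph{neighbour} of the global-maximum vertex $p$ all work as you describe.

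There is, however, a gap in your equality analysis. You correctly deduce that $G$ is bipartite and, by propagation, that $w$ is constant on each bipartition class, say $\alpha$ on $V_1$ and $\beta$ on $V_2$. But the appeal to ``exactly as in the equality analysis of Theorem~\ref{zhang2004-th3}'' is misleading: in that proof it is the \emph{Laplacian} eigenvector $x$ that is constant on each side, and then $\lambda_1 a=d(u)(a-b)$ instantly forces $d(u)$ constant. Here it is $w=D^{-1}y$ that is constant, so the eigenvector equation at $u\in V_1$ reads $\rho\alpha=d(u)\alpha+m(u)\beta$, which only links $d(u)$ to $m(u)$ and does not by itself pin down $d(u)$. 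One way to close the gap: double-counting $\sum_{u\in V_1}d(u)m(u)=\sum_{v\in V_2}d(v)^2$ and its symmetric counterpart, combined with the eigenvector relations, forces $\alpha=\beta$; hence $w$ is globally constant and the degree vector $D\mathbf{1}$ is the Perron vector of $K$. Since $K\mathbf{1}=2D\mathbf{1}$, this yields $K(2D\mathbf{1}-\rho\mathbf{1})=0$, and for connected bipartite $G$ the kernel of $K$ is spanned by the $\pm1$ bipartition vector, so $2d(u)-\rho$ is constant on each side, i.e.\ $G$ is regular or semiregular.
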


\subsection{ The upper bounds versus related matrices } In this subsection, we introduced
another approach to obtain the upper bound for the largest
Laplacian eigenvalue.  Li and Pan in \cite{li2004} used the
relationships of the eigenvalues of between the matrix
$K(G)=D(G)+A(G)$ and $L(G)$, and nonnegative matrix theory to
present some upper bounds for the largest Laplacian eigenvalue of
$G$.
\begin{lemma}(\cite{li2004}, \cite{liu2004})\label{li2004-lem}
Let $G$ be a simple connected graph and let $f(x)$ be a polynomial
on $x$. Denote by $\rho(K)$ the spectral radius of the matrix
$K=D(G)+A(G)$. Let $R_v(f(K))$ be the  corresponding $v-$th row
sum of $f(K)$. Then
\begin{equation}\label{li2004-1}
\min\{ R_v(f(K))\ | \ v\in V(G)\}\le f(\rho(K))\le
\max\{R_v(f(K))\ | \ v\in V(G)\}.
\end{equation}
Moreover, if the row sums of $f(K))$ are not all equal, then both
inequalities in (\ref{li2004-1}) are strict.
\end{lemma}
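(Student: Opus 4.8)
The plan is to exploit the Perron--Frobenius theory of the nonnegative matrix $K=D(G)+A(G)$. Since $G$ is connected, $A(G)$ and hence $K$ is an irreducible nonnegative symmetric matrix, so by the Perron--Frobenius theorem its spectral radius $\rho(K)$ is a simple eigenvalue of $K$ admitting an entrywise positive eigenvector $x=(x_v)_{v\in V(G)}>0$, with $Kx=\rho(K)x$. The key observation is that $x$ is simultaneously an eigenvector of every polynomial $f(K)$: from $Kx=\rho(K)x$ one gets $K^jx=\rho(K)^jx$ for all $j\ge 0$, and taking the corresponding linear combination yields $f(K)x=f(\rho(K))x$. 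Thus the quantity I want to bound, $f(\rho(K))$, is realized as the eigenvalue of $f(K)$ attached to the \emph{positive} vector $x$.

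Next I would run the standard largest/smallest coordinate estimate. Write $f(K)=(m_{vu})$, noting that when $f$ has nonnegative coefficients (the case in which this lemma is applied) $f(K)$ is again a nonnegative matrix, since $K\ge 0$. Let $p$ be an index with $x_p=\max_v x_v>0$. Reading off the $p$-th coordinate of $f(K)x=f(\rho(K))x$ gives
\[
f(\rho(K))\,x_p=\sum_u m_{pu}x_u\le x_p\sum_u m_{pu}=x_p\,R_p(f(K)),
\]
and dividing by $x_p>0$ yields $f(\rho(K))\le R_p(f(K))\le\max_v R_v(f(K))$. Choosing instead an index $q$ with $x_q=\min_v x_v$ and reversing the inequality (each $m_{qu}\ge 0$ and each $x_u\ge x_q$) gives $f(\rho(K))\ge R_q(f(K))\ge\min_v R_v(f(K))$. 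This establishes (\ref{li2004-1}).

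For the strictness claim I would argue by contradiction. First note that if $x$ were constant, then $f(K)x=f(\rho(K))x$ would force every row sum to equal $f(\rho(K))$, i.e.\ all row sums would be equal; hence if the row sums are not all equal then $x$ is nonconstant. Now suppose the upper inequality were an equality, $f(\rho(K))=\max_v R_v(f(K))$. Then the inequalities in the display above are all tight at $p$, which forces $x_u=x_p$ for every $u$ with $m_{pu}>0$; that is, the set $S=\{v:x_v=\max_v x_v\}$ is invariant under the adjacency encoded by the nonzero off-diagonal entries of $f(K)$. Because $f(K)$ is irreducible (for such $f$ its off-diagonal support contains that of the irreducible matrix $K$), $S$ cannot be a proper nonempty subset of $V(G)$, so $S=V(G)$ and $x$ is constant, contradicting nonconstancy. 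Hence the upper inequality is strict, and the symmetric argument at the minimizing coordinate $q$ handles the lower one.

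The step needing the most care is the strictness/irreducibility bookkeeping: one must pin down exactly which hypothesis on $f$ guarantees that $f(K)$ is nonnegative and irreducible (nonnegative coefficients, with a positive coefficient on some positive power, suffices because $K$ has positive diagonal entries), and then make the propagation argument ``$S$ invariant $\Rightarrow S=V(G)$'' precise using irreducibility. The remainder is the routine Perron--Frobenius row-sum bound, where I expect no difficulty.
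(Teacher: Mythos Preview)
Your max/min-coordinate argument is sound when $f(K)$ is entrywise nonnegative, but the lemma is stated for an \emph{arbitrary} polynomial $f$, and your justification (``when $f$ has nonnegative coefficients, the case in which this lemma is applied'') does not match the actual application: the very next theorem invokes the lemma with $f(x)=x^2-(\delta-1)x$, which has a negative coefficient whenever $\delta\ge 2$. The step $\sum_u m_{pu}x_u\le x_p\sum_u m_{pu}$ genuinely needs every $m_{pu}\ge 0$, and your strictness argument further requires $f(K)$ to be irreducible; neither is guaranteed for general $f$. So as written you have proved a strictly weaker statement than the one asserted.

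The paper's proof sidesteps all of this by exploiting the \emph{symmetry} of $K$ rather than any sign condition on $f(K)$. Normalize the positive Perron vector so that $\sum_v x_v=1$. Summing all coordinates of the identity $f(K)x=f(\rho(K))x$ and using that $f(K)$ is symmetric (hence its column sums coincide with its row sums) gives
\[
f(\rho(K))=\sum_v\bigl(f(K)x\bigr)_v=\sum_u x_u\sum_v \bigl(f(K)\bigr)_{vu}=\sum_u x_u\,R_u\bigl(f(K)\bigr).
\]
Thus $f(\rho(K))$ is a convex combination, with strictly positive weights $x_u$, of the row sums $R_u(f(K))$; the two inequalities and their strictness (when the row sums are not all equal) follow immediately, with no hypothesis on the sign or irreducibility of $f(K)$. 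This is both shorter and strictly more general than your route.
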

\begin{proof} Let $x=(x_v, v\in V(G))^T$ be a positive eigenvector
of $K$ with $sum_{v\in V(G)}x_v=1$. Then by
$$f(K)x=f(\rho(K))x,$$
we have
$$f(\rho(K))=f(\rho(K))\sum_{x\in V(G)}x_v=\sum_{v\in
V(G)}(f(K)x)_v=\sum_{v\in V(G)}x_vR_v(f(K)).$$ Therefore the
desired result holds since the entries of $x$ are positive and
their sum is  equal to 1.
\end{proof}
\begin{theorem}(\cite{li2004})\label{li2004-th}
Let $G$ be a simple connected graph with $n$ vertices and $m$
edges. Denote by $\Delta$ and $\delta$ the maximum and minimum
degrees of $G$, respectively. Then
\begin{equation}\label{li2004-2}
\lambda_1(G)\le
\frac{\delta-1+\sqrt{(\delta-1)^2+8(\Delta^2+2m-(n-1)\delta)}}{2}
\end{equation}
with equality if and only if $G$ is bipartite and regular.
\end{theorem}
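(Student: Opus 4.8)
The plan is to bound $\rho(K)=\rho(D(G)+A(G))$ rather than $\lambda_1(G)$ directly, exploiting (\ref{graph-line-eig}) of Lemma~\ref{graph-line}, which gives $\lambda_1(G)\le\rho(K)$ with equality precisely when $G$ is bipartite. To control $\rho(K)$ I would feed a carefully chosen polynomial into the row-sum estimate of Lemma~\ref{li2004-lem}. Since the target bound (\ref{li2004-2}) is exactly the larger root of $x^2-(\delta-1)x-2(\Delta^2+2m-(n-1)\delta)=0$, the natural choice is $f(x)=x^2-(\delta-1)x$: then $f(\rho(K))\le\max_v R_v(f(K))$ should produce a quadratic inequality in $\rho(K)$ of precisely the desired shape.

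The first computations are routine. The row sum of $K$ at $v$ is $R_v(K)=d(v)+d(v)=2d(v)$, and since $R_v(K^2)=\sum_w K_{vw}R_w(K)=2\sum_w K_{vw}d(w)$, a short calculation gives $R_v(K^2)=2\bigl(d(v)^2+\sum_{w\sim v}d(w)\bigr)=2d(v)\bigl(d(v)+m(v)\bigr)$. Hence $R_v(f(K))=R_v(K^2)-(\delta-1)R_v(K)=2d(v)\bigl(d(v)+m(v)-\delta+1\bigr)$. So far this only reproduces an $m(v)$-type bound; the point is now to eliminate the dependence on the local quantity $m(v)$ in favour of the global data $\Delta,\delta,m,n$.

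The heart of the argument, and the step I expect to require the most care, is the estimate $\sum_{w\sim v}d(w)\le 2m-d(v)-(n-1-d(v))\delta$. This comes from splitting $2m=\sum_w d(w)$ into the contributions of $v$ itself, its $d(v)$ neighbours, and its $n-1-d(v)$ non-neighbours, and using that every non-neighbour has degree at least $\delta$. Substituting this together with $d(v)\le\Delta$ into $R_v(f(K))$, the cross terms involving $d(v)$ should cancel exactly, leaving $R_v(f(K))\le 2\bigl(d(v)^2+2m-(n-1)\delta\bigr)\le 2\bigl(\Delta^2+2m-(n-1)\delta\bigr)$. Getting that cancellation right is the only delicate bookkeeping. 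Combined with Lemma~\ref{li2004-lem} this yields $\rho(K)^2-(\delta-1)\rho(K)-2(\Delta^2+2m-(n-1)\delta)\le 0$, and solving the quadratic together with $\lambda_1(G)\le\rho(K)$ gives (\ref{li2004-2}).

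For the equality characterization I would trace back every inequality. Equality forces $\lambda_1(G)=\rho(K)$, hence $G$ bipartite by Lemma~\ref{graph-line}. It also forces $f(\rho(K))=\max_v R_v(f(K))$, so by the strictness clause of Lemma~\ref{li2004-lem} all row sums $R_v(f(K))$ must coincide, with common value equal to $2(\Delta^2+2m-(n-1)\delta)$. But $R_v(f(K))\le 2\bigl(d(v)^2+2m-(n-1)\delta\bigr)\le 2\bigl(\Delta^2+2m-(n-1)\delta\bigr)$, so equality in the last step forces $d(v)=\Delta$ for \emph{every} $v$; thus $G$ is regular, and therefore bipartite regular. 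The converse is a direct check: for an $r$-regular bipartite graph $K=rI+A$ gives $\rho(K)=2r$, and both $f(2r)=2r(r+1)$ and $2(\Delta^2+2m-(n-1)\delta)=2r(r+1)$ agree, so equality holds throughout.
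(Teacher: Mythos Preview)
Your proposal is correct and follows essentially the same route as the paper: you use the same polynomial $f(x)=x^2-(\delta-1)x$, the same decomposition $\sum_{w\sim v}d(w)\le 2m-d(v)-(n-1-d(v))\delta$, and the same passage via Lemma~\ref{li2004-lem} and (\ref{graph-line-eig}) to the quadratic in $\rho(K)$. Your treatment of the equality case is in fact more complete than what the paper records, which omits that part of the argument.
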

\begin{proof}
Let $K=D(G)+A(G)$. Then $K^2=D(G)^2+D(G)A(G)+A(G)D(G)+A(G)^2$.
Then the $u-$row sum of $K^2$ is
\begin{eqnarray*}
R_u(K^2)&=&2d(u)+2\sum_{v~u}d(v)=2d(u)^2+4m-2d(u)-2\sum_{v\not\sim
u, v\neq u}d(v)\\
&\le &
2\Delta^2+4m-2d(u)-2(n-1-d(u))\delta\\
&=&2\Delta^2+4m+2(\delta-1)d(u)-2(n-1)\delta.
\end{eqnarray*}
Let $f(x)=x^2-(\delta-1)x$. It follows from Lemma~\ref{li2004-lem}
that
$$\rho(K)^2-(\delta-1)\rho(K)\le 2\Delta^2+4m-2(n-1)\delta.$$
Combining the above inequality and (\ref{graph-line-eig}), we are
able to obtain (\ref{li2004-2}).
\end{proof}
Using the similar method,  Li et.al in \cite{li2004} and Liu et.al
in \cite{liu2004} gave the following:
\begin{theorem}(\cite{li2004},
\cite{liu2004})\label{li2004-liu2004} Let $G$ be a simple
connected graph with $n$ vertices and $m$ edges. Denote by
$\Delta$ and $\delta$ the maximum and minimum degrees of $G$,
respectively. Then
\begin{equation}\label{li2004-liu2004-1}
\lambda_1(G)\le
\frac{\Delta+\delta-1+\sqrt{(\Delta+\delta-1)^2+8(2m-(n-1)\delta)}}{2}
\end{equation}
with equality if and only if $G$ is bipartite and regular.
\end{theorem}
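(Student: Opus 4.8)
The plan is to mimic the proof of Theorem~\ref{li2004-th} almost verbatim, changing only the row-sum bound by keeping track of the maximum degree on the diagonal term rather than bounding it by $\Delta^2$. The key observation in the previous theorem was that $R_u(K^2)=2d(u)^2+4m-2d(u)-2\sum_{v\not\sim u,\,v\neq u}d(v)$, after which one crudely replaced $d(u)^2$ by $\Delta^2$. Here the target bound (\ref{li2004-liu2004-1}) has $\Delta+\delta$ in place of the lone $\delta$ in the linear coefficient, which suggests that instead of bounding $2d(u)^2$ we should write $2d(u)^2=2\Delta\, d(u)+2d(u)(d(u)-\Delta)\le 2\Delta\, d(u)$, so the quadratic-in-$d(u)$ term is converted into a linear one with coefficient $2\Delta$.

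Concretely, first I would recall $K=D(G)+A(G)$ and expand $K^2=D^2+DA+AD+A^2$, so that the $u$-th row sum is $R_u(K^2)=2d(u)^2+2\sum_{v\sim u}d(v)$. Then, as before, I would rewrite $2\sum_{v\sim u}d(v)=4m-2d(u)-2\sum_{v\not\sim u,\,v\neq u}d(v)$ and bound the last sum below by $2(n-1-d(u))\delta$, giving
\begin{equation*}
R_u(K^2)\le 2d(u)^2+4m-2d(u)-2(n-1-d(u))\delta.
\end{equation*}
The decisive step is to handle the $2d(u)^2$ term by the inequality $d(u)\le\Delta$, writing $2d(u)^2\le 2\Delta d(u)$, so that
\begin{equation*}
R_u(K^2)\le 2\Delta\, d(u)+4m-2d(u)-2(n-1)\delta+2\delta\, d(u)=(2\Delta+2\delta-2)d(u)+4m-2(n-1)\delta.
\end{equation*}
Now I would choose the polynomial $f(x)=x^2-(\Delta+\delta-1)x$ so that $R_u(f(K))=R_u(K^2)-(\Delta+\delta-1)R_u(K)$, and since $R_u(K)=2d(u)$, the troublesome linear-in-$d(u)$ terms cancel, leaving $R_u(f(K))\le 4m-2(n-1)\delta$ uniformly in $u$. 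Applying Lemma~\ref{li2004-lem} then gives $\rho(K)^2-(\Delta+\delta-1)\rho(K)\le 4m-2(n-1)\delta$, and solving this quadratic together with $\lambda_1(G)\le\rho(K)$ from (\ref{graph-line-eig}) yields (\ref{li2004-liu2004-1}).

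I expect the main obstacle to be not the inequality chain itself, which is routine once the substitution $2d(u)^2\le 2\Delta d(u)$ is chosen, but rather the equality analysis. Equality throughout forces $d(u)=\Delta$ for every vertex $u$ that carries positive eigenvector weight (so $G$ is regular), forces every nonadjacent pair to have degree exactly $\delta$, and—via the equality condition of Lemma~\ref{li2004-lem} that all row sums of $f(K)$ coincide together with the equality case of (\ref{graph-line-eig})—forces $G$ to be bipartite. Reconciling these conditions to conclude exactly "$G$ is bipartite and regular," and checking that such graphs do attain equality by direct computation on $\rho(K)=\Delta+\delta=2\Delta$, is the part requiring the most care.
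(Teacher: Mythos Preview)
Your proposal is correct and is precisely the approach the paper has in mind: immediately after proving Theorem~\ref{li2004-th} the paper states only ``Using the similar method, Li et~al.\ in \cite{li2004} and Liu et~al.\ in \cite{liu2004} gave the following,'' and provides no further argument. Your choice of $f(x)=x^{2}-(\Delta+\delta-1)x$ together with the estimate $2d(u)^{2}\le 2\Delta\,d(u)$ in place of $2d(u)^{2}\le 2\Delta^{2}$ is exactly the tweak that converts the proof of Theorem~\ref{li2004-th} into a proof of (\ref{li2004-liu2004-1}), and your handling of the equality case is appropriate.
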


\subsection{Always nontrivial upper bounds }

 In the above subsections, several kind  upper bounds for the largest
 Laplacian eigenvalue are presented. However, sometime these
 bounds exceed the number of vertices in $G$, which becomes an
 trivial upper bounds.  Rojo et.al. in \cite{rojo2000} obtained an
 always nontrivial upper bound. Their result is
 \begin{theorem}(\cite{rojo2000})\label{rojo2000-th}
 Let $G$ be a simple graph. Denote by $N(u)$ the set of all
 neighbor vertices of vertex $u$ in $G$. Then
 \begin{equation}\label{rojo20000-1}
 \lambda_1(G)\le \max\left\{ d(u)+d(v)-|N(u)\bigcap N(v)| \ |\
 u,v\in V(G)\right\}.
 \end{equation}
 \end{theorem}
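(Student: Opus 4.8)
The plan is to work directly with an eigenvector of the Laplacian $L(G)$ for $\lambda_1$ and to exploit the componentwise eigenvalue equation, rather than passing through the signless Laplacian $K(G)$ or the line graph as in the preceding subsections. Write $\lambda_1 = \lambda_1(G)$ and let $x = (x_v)_{v \in V}$ be a real eigenvector with $L(G)x = \lambda_1 x$. If $\lambda_1 = 0$ then $G$ is edgeless and the right-hand side of (\ref{rojo20000-1}) is nonnegative, so the bound is trivial; assume therefore $\lambda_1 > 0$. Since $\mathbf{1}$ lies in the kernel of $L(G)$ and $\lambda_1 \neq 0$, the eigenvector $x$ is orthogonal to $\mathbf{1}$, hence $\sum_v x_v = 0$ and $x$ is nonconstant. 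Choose $p$ and $q$ with $x_p = \max_v x_v$ and $x_q = \min_v x_v$; nonconstancy forces $x_p > x_q$, and in particular $p \neq q$.

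The key step is to record the eigen-equation at the two extremal vertices in the difference form $\lambda_1 x_u = \sum_{v \sim u}(x_u - x_v)$ (the same form underlying the quadratic expression $\sum (x_i-x_j)^2$ in the Introduction), so that
$$\lambda_1\,(x_p - x_q) = \sum_{v \sim p}(x_p - x_v) + \sum_{v \sim q}(x_v - x_q).$$
Every summand on the right is nonnegative and at most $x_p - x_q$, because $x_q \le x_v \le x_p$ for all $v$. Bounding each of the $d(p)+d(q)$ terms crudely by $x_p-x_q$ would only yield the Anderson--Morley-type estimate $\lambda_1 \le d(p)+d(q)$; the sharpening comes from the next observation.

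The main idea, which is what produces the correction term $-|N(u)\cap N(v)|$, is to treat the common neighbors separately. For a common neighbor $w \in N(p)\cap N(q)$ the two sums contribute $(x_p - x_w)$ and $(x_w - x_q)$, whose \emph{total} is exactly $x_p - x_q$, rather than the $2(x_p-x_q)$ one gets from bounding the two terms independently. Partitioning $N(p) = (N(p)\cap N(q)) \sqcup (N(p)\setminus N(q))$ and likewise for $q$, and bounding each remaining term by $x_p - x_q$, I expect to obtain
$$\lambda_1\,(x_p - x_q) \le \big[\,|N(p)\cap N(q)| + (d(p) - |N(p)\cap N(q)|) + (d(q) - |N(p)\cap N(q)|)\,\big]\,(x_p - x_q).$$
Dividing by $x_p - x_q > 0$ gives $\lambda_1 \le d(p) + d(q) - |N(p)\cap N(q)|$, and since $p,q$ are particular vertices this is at most the maximum over all pairs, establishing (\ref{rojo20000-1}).

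The points requiring care are bookkeeping rather than conceptual: verifying $p \neq q$ so that the final division is legitimate; checking that no common neighbor can equal $p$ or $q$ in the simple graph (impossible, as $p \notin N(p)$ and $q \notin N(q)$, so the loop-free hypothesis is used here); and confirming that the neighborhood partitions are exact, so the counts $d(p)-|N(p)\cap N(q)|$ and $d(q)-|N(p)\cap N(q)|$ are correct. I would also stress that the maximum in (\ref{rojo20000-1}) ranges over \emph{all} pairs $u,v\in V(G)$, not only edges: this is essential, since the extremal vertices $p,q$ singled out by the eigenvector need not be adjacent.
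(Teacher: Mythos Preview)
Your argument is correct. The paper proceeds differently: it forms the nonnegative matrix $B = L(G) + ee^T$, observes that $e$ is a positive Perron eigenvector of $B$ with $\rho(B)=n$ while the remaining eigenvalues of $B$ coincide with the nonzero Laplacian eigenvalues (so $\xi(B)=\lambda_1(G)$), and then invokes the Bauer--Deutsch--Stoer bound (Lemma~\ref{bauer-th}) with $w=e$; a row-by-row computation of $\sum_k|b_{ik}-b_{jk}|$ then yields exactly $2\bigl(d(i)+d(j)-|N(i)\cap N(j)|\bigr)$ for every pair $i,j$.

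The two routes are closely related---your max/min componentwise estimate is, in effect, the proof of the Bauer--Deutsch--Stoer lemma carried out in this concrete instance---but they are not the same presentation. The paper's approach situates the bound within a general matrix-analytic framework and explains why the inequality is ``always nontrivial'' (the Bauer--Deutsch--Stoer bound on $\xi(B)$ can never exceed $\rho(B)=n$). Your approach is more elementary and self-contained: it avoids the auxiliary matrix $B$ and the external lemma, and it makes the combinatorial mechanism behind the $-|N(u)\cap N(v)|$ correction completely transparent through the telescoping $(x_p-x_w)+(x_w-x_q)=x_p-x_q$ at each common neighbor.
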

Before  giving an proof,  we need the following Lemma
\begin{lemma}(\cite{bauer1969})\label{bauer-th}
Let $B=(b_{ij})$ be an $n\times n$ nonnegative matrix. Denote by
$\xi(B)$ the second largest modulus  of the eigenvalues of $B$. If
$w=(w_1,\cdots, w_n)^T$ is a positive eigenvector of $B$
corresponding to the spectral radius $\rho(B),$ then
\begin{equation}\label{bauer-1}
\xi(B)\le
\frac{1}{2}\max\left\{\sum_{k=1}^{n}w_k|\frac{b_{ik}}{w_i}-
\frac{b_{jk}}{w_j}| \ | \ 1\le i,j\le n\right\}.
\end{equation}
\end{lemma}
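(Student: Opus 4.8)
The plan is to reduce inequality (\ref{bauer-1}) to a sharp contraction estimate for the $\ell_1$-distance between the rows of a normalized nonnegative matrix. First I would use the positive eigenvector $w$ to perform a diagonal similarity: let $W$ be the diagonal matrix with diagonal entries $w_1,\dots,w_n$ and set $C=W^{-1}BW=(c_{ij})$, so that $c_{ij}=b_{ij}w_j/w_i$. Then $C$ is nonnegative and similar to $B$, whence $\rho(C)=\rho(B)$ and $\xi(C)=\xi(B)$; moreover every row of $C$ has the same sum $\rho(B)$, since $\sum_j c_{ij}=(Bw)_i/w_i=\rho(B)$. A direct rewriting shows that
$$\sum_{k=1}^{n}|c_{ik}-c_{jk}|=\sum_{k=1}^{n}w_k\left|\frac{b_{ik}}{w_i}-\frac{b_{jk}}{w_j}\right|$$
for all $i,j$, so the right-hand side of (\ref{bauer-1}) is exactly $\tau:=\frac{1}{2}\max_{i,j}\sum_{k}|c_{ik}-c_{jk}|$, half the largest $\ell_1$-distance between two rows of $C$.

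Next I would pass to the subdominant eigenvalue. Let $\lambda\neq\rho(B)$ be an eigenvalue of $C$ with $|\lambda|=\xi(B)$, and let $v$ be a corresponding left eigenvector, so $v^{T}C=\lambda v^{T}$ (if the second largest modulus were attained only by a repeated $\rho(B)$, one replaces $v$ by a zero-sum combination of left eigenvectors for $\rho(B)$, to which the estimate below still applies). Pairing $v^{T}C=\lambda v^{T}$ with the all-ones vector $\mathbf{1}$ and using $C\mathbf{1}=\rho(B)\mathbf{1}$ gives $(\lambda-\rho(B))\,v^{T}\mathbf{1}=0$, so the coordinates of $v$ sum to zero. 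The theorem then reduces to the contraction estimate
$$\|y^{T}C\|_1\le\tau\,\|y\|_1$$
for real zero-sum vectors $y$: when $\lambda$ is real, $v$ may be taken real and this bounds $|\lambda|=\|v^{T}C\|_1/\|v\|_1\le\tau$ at once, while the complex case is reduced to it below.

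The heart of the argument is this contraction lemma, and it is where the sharp constant $\tau$ (with its factor $\frac12$ and the maximum taken over pairs of rows, rather than a cruder quantity) has to be extracted. For real $y$ I would prove it by a transportation decomposition: any real zero-sum vector can be written as $y=\sum_{(i,j)}\alpha_{ij}(e_i-e_j)$ with $\alpha_{ij}\ge0$ and total weight $\sum_{(i,j)}\alpha_{ij}=\frac12\|y\|_1$, routing the positive coordinates of $y$ to the negative ones. Since $(e_i-e_j)^{T}C$ is the difference of rows $i$ and $j$ of $C$, for which $\|(e_i-e_j)^{T}C\|_1=\sum_k|c_{ik}-c_{jk}|\le 2\tau$, the triangle inequality yields $\|y^{T}C\|_1\le 2\tau\sum_{(i,j)}\alpha_{ij}=\tau\|y\|_1$.

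The step I expect to be the genuine obstacle is that the eigenvalue $\lambda$ realizing $\xi(B)$ may be complex, so that $v$ is complex and neither the triangle inequality nor a complex dipole decomposition recovers the sharp bound $\tau$. I would resolve this by working inside the two-dimensional real invariant subspace spanned by $a=\Re v$ and $b=\Im v$, both of which lie in the zero-sum hyperplane. Writing $\lambda=|\lambda|(\cos\phi+i\sin\phi)$ and $u_\theta^{T}=\cos\theta\,a^{T}+\sin\theta\,b^{T}$, a short computation from $v^{T}C=\lambda v^{T}$ gives the rotation identity $u_\theta^{T}C=|\lambda|\,u_{\theta-\phi}^{T}$. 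Applying the real contraction lemma to the real vector $u_\theta$ and choosing $\theta$ so that $\|u_{\theta-\phi}\|_1$ is largest gives
$$|\lambda|\,\|u_{\theta-\phi}\|_1=\|u_\theta^{T}C\|_1\le\tau\,\|u_\theta\|_1\le\tau\,\|u_{\theta-\phi}\|_1,$$
and hence $\xi(B)=|\lambda|\le\tau$, which is precisely (\ref{bauer-1}); the same two-dimensional device, examined for equality, would also be my route to characterizing when the bound is attained.
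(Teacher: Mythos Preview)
The paper does not prove this lemma at all: it is quoted from Bauer, Deutsch and Stoer \cite{bauer1969} and immediately applied, as a black box, in the argument for Theorem~\ref{rojo2000-th}. So there is no ``paper's own proof'' to compare against.

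That said, your argument is sound and is in fact the classical contraction-coefficient approach that underlies the original 1969 result. The diagonal similarity by $W$ reducing to a matrix $C$ with constant row sums, the identification of the right-hand side of (\ref{bauer-1}) with half the maximal $\ell_1$-distance between rows of $C$, the transportation decomposition of a real zero-sum vector into dipoles $e_i-e_j$ yielding the sharp factor $\tfrac12$, and the rotation trick in the two-dimensional real invariant plane for complex eigenvalues are all correct and standard. The one point that deserves an extra word is your parenthetical about the case $\xi(B)=\rho(B)$: since $w>0$, the scaled matrix $C/\rho(B)$ is row-stochastic, and it is a classical fact that the eigenvalue $1$ of a stochastic matrix is semisimple; hence if $\rho(B)$ is a repeated eigenvalue its left eigenspace has dimension at least two and therefore contains a nonzero zero-sum vector, to which your contraction estimate applies directly. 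With that remark your proof is complete.
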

Now we use the lemma to prove Theorem~\ref{rojo2000-th}. Let
$B=L(G)+ee^T$, where $e$ is the all ones $n-$dimensional column
vector. Thus $M$ has a positive eigenvector $e$ corresponding to
$\rho(B)=n$ and $\xi(B)=\lambda_1(G)$. Then from
Lemma~\ref{bauer-th} and some calculations, it is not difficult to
get the desired result. Clearly  this upper bound is always
nontrivial.  But we notice that the vertices $u$ and $v$ in
Theorem~\ref{rojo2000-th} may be  or not adjacent.  It stimulated
researcher  to consider whether this result may be improved by the
adjacent relationships.  In 2003, Das \cite{das2003} improved this
upper bound. Further, Das in \cite{das2004} considered  when the
upper bound is attained and proposed a conjecture.  Yu at.el in
\cite{yu2005} confirmed the conjecture.  Before stating this
theorem, we need the following notation.  Let $F=(V, E)$ be a
semiregular with bipartition $V=V_1\bigcup V_2$ and  let
$F^+=(V,E^+)$ be a {\it super graph} of F constructed by joining
those pairs of vertices of $V_1$ (or $V_2$) which have same set of
neighbors in the other set $V_2$ (or $V_1$), if such pairs exist,
where $E^+$ is equal to E with some new edges (if new edges were
constructed).
\begin{theorem}(\cite{das2003}, \cite{das2004}, \cite{yu2005})\label{das2003-th}
Let $G$ be a simple connected graph. Then
\begin{equation}\label{das2003-1}
\lambda_1(G)\le \max\left\{ d(u)+d(v)-|N(u)\bigcap N(v)| \ |\
 (u,v)\in E(G)\right\}
 \end{equation}
with equality if and only if $G$ is a super graph of a semiregular
graph.
 \end{theorem}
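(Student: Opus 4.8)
The plan is to bound $\lambda_1(G)$ directly from an eigenvector, choosing as the pivotal edge the one on which the eigenvector varies most; the common neighbours of its endpoints will then produce an exact cancellation that is invisible to the symmetric Bauer-type estimate behind Theorem~\ref{rojo2000-th}. Indeed, one checks that the bound of Lemma~\ref{bauer-th} applied to $B=L(G)+ee^T$ assigns the \emph{same} value $d(u)+d(v)-|N(u)\cap N(v)|$ to adjacent and to non-adjacent pairs, so that method alone cannot see the restriction of the maximum to $E(G)$. A genuinely edge-based argument is needed.

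First I would take a unit eigenvector $x=(x_w)_{w\in V}$ with $L(G)x=\lambda_1(G)x$. Since $\lambda_1(G)>0$ for a connected graph on at least two vertices, $x\perp e$, so the differences $x_s-x_t$ across edges are not all zero. As in the proof of Theorem~\ref{zhang2004-th3}, the eigenequation at a vertex $w$ reads $\sum_{t\sim w}(x_w-x_t)=\lambda_1(G)x_w$. Now choose the edge $(u,v)\in E(G)$ maximising $|x_s-x_t|$ over all edges, and normalise (negating $x$ and relabelling the two ends if necessary) so that $x_u-x_v=1$ equals this maximum; then $|x_s-x_t|\le 1$ on every edge. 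Writing the eigenequations at $u$ and at $v$, isolating the contribution of the edge $(u,v)$, and subtracting, I obtain
$$\lambda_1(G)-2=\sum_{t\sim u,\,t\neq v}(x_u-x_t)+\sum_{t\sim v,\,t\neq u}(x_t-x_v).$$

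The decisive observation is that each common neighbour $t\in N(u)\cap N(v)$ occurs in \emph{both} sums and contributes $(x_u-x_t)+(x_t-x_v)=x_u-x_v=1$, independently of the value $x_t$, whereas the crude bound would permit it as much as $2$. Splitting the two sums into common neighbours, $u$-only neighbours and $v$-only neighbours, bounding each of the $d(u)-1-|N(u)\cap N(v)|$ and $d(v)-1-|N(u)\cap N(v)|$ non-common terms by $1$, and counting the $|N(u)\cap N(v)|$ common terms as exactly $1$, gives $\lambda_1(G)-2\le d(u)+d(v)-2-|N(u)\cap N(v)|$, which is precisely (\ref{das2003-1}) for this edge and hence at most the asserted maximum. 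Equivalently, the same computation can be run on the vector $Q(G)^Tx$, an eigenvector of $Q(G)^TQ(G)$ whose absolute value is $2I+A(G^l)$ by (\ref{graph-adj}); since this matrix is indexed by the edges of $G$, the ``largest-coordinate'' choice lands automatically on an edge, which is exactly why the maximum ranges over $E(G)$.

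For equality, the easy direction is a direct verification: if $G$ is the super graph $F^+$ of a semiregular bipartite graph $F$ (for example $K_n$, the super graph of a complete bipartite graph), one exhibits the eigenvector that is constant on each part of $F$, notes that the added intra-part edges join vertices of equal $x$-value and so contribute $0$ to the eigenequation, and checks that every inequality above is then tight. The converse is the main obstacle. Equality forces $x_t=x_v$ for every $u$-only neighbour and $x_t=x_u$ for every $v$-only neighbour, but it imposes \emph{no} constraint on the common neighbours: the example of $K_n$, where every other vertex is a common neighbour and the extremal eigenvector may take a third value, shows that one cannot simply two-colour $V$ and read off a bipartition. Extracting from this local rigidity a semiregular bipartite skeleton, and proving that every remaining edge joins two vertices of the same part with identical neighbourhoods (so that $G=F^+$), requires a global propagation argument through the connected graph rather than an elementary colouring; this is the delicate point, and it is consistent with the fact that the equality case was settled only later, in \cite{yu2005}.
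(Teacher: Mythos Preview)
The paper, being a survey, gives no proof of this theorem at all; it merely cites \cite{das2003}, \cite{das2004}, \cite{yu2005} and records the context (Rojo's bound via Lemma~\ref{bauer-th}, then Das's refinement to adjacent pairs, then the settling of the equality case). So there is nothing in the paper to compare your argument against directly.

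That said, your argument for the inequality is correct and is essentially Das's original one. Choosing the edge $(u,v)$ maximising $|x_s-x_t|$, subtracting the two eigenequations, and observing that each common neighbour contributes exactly $x_u-x_v=1$ rather than the crude bound $2$ is precisely the mechanism that sharpens Rojo's estimate down to adjacent pairs; your counting $|N(u)\cap N(v)|+(d(u)-1-|N(u)\cap N(v)|)+(d(v)-1-|N(u)\cap N(v)|)$ is clean and right. Your remark that the Bauer--Deutsch--Stoer lemma assigns the same value to adjacent and non-adjacent pairs, and therefore cannot by itself restrict the maximum to $E(G)$, is also a fair diagnosis of why a genuinely edge-based argument is needed.

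On the equality case you are honest but incomplete. The ``if'' direction you sketch is fine. For the ``only if'' direction you correctly isolate the difficulty: equality forces $x_t=x_v$ for $u$-only neighbours and $x_t=x_u$ for $v$-only neighbours, yet leaves common neighbours unconstrained, so one cannot read off a bipartition locally. You then stop, pointing to \cite{yu2005}. As a proof proposal this is acceptable as far as it goes, but be aware that what you have written is a plan rather than a proof for the characterisation; the global propagation you allude to is the actual content of \cite{yu2005} and would need to be supplied.
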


\subsection{The lower  bounds for the largest Laplacian eigenvalue }
 The first lower  bound for the largest Laplacian eigenvalue may be contributed to
Fiedler \cite{fiedler1973}. He showed  the following result
\begin{theorem}\label{fiedler1973-th}(\cite{fiedler1973}) Let $G$
be a graph with on $n$ vertices and the maximum degree $\Delta$.
Then
\begin{equation}\label{fiedler1973-1}
\lambda_1(G)\ge \frac{n}{n-1}\Delta
\end{equation}
\end{theorem}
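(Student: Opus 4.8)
The plan is to bound $\lambda_1(G)$ from below by exhibiting a single well-chosen test vector and applying the Rayleigh-Ritz characterization (\ref{max-eig}), which gives $\lambda_1(G)=\max\{x^TL(G)x : \|x\|=1\}$, or equivalently $\lambda_1(G)\ge x^TL(G)x/(x^Tx)$ for every nonzero $x$. The only other ingredient needed is the quadratic-form expression $x^TL(G)x=\sum_{(v_i,v_j)\in E(G)}(x_i-x_j)^2$ recorded in the Introduction.

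First I would fix a vertex $w$ of maximum degree $d(w)=\Delta$ and define the test vector $x=(x_1,\dots,x_n)^T$ by $x_w=n-1$ and $x_u=-1$ for every $u\neq w$. This vector is engineered so that $\sum_i x_i=0$, i.e.\ it is orthogonal to the all-ones vector $e$ spanning the kernel of $L(G)$; morally this prevents the trivial eigenvalue $\lambda_n=0$ from diluting the quotient. Next I would compute the two ingredients of the Rayleigh quotient. For the denominator, $x^Tx=(n-1)^2+(n-1)\cdot 1=n(n-1)$. For the numerator I use the quadratic form: an edge $(v_i,v_j)$ contributes $(x_i-x_j)^2$, which is $0$ unless exactly one endpoint is $w$, since all vertices other than $w$ carry the same value $-1$. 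Each of the $\Delta$ edges incident with $w$ contributes $(x_w-x_u)^2=((n-1)-(-1))^2=n^2$, so $x^TL(G)x=\Delta n^2$. Combining these, the Rayleigh quotient equals $\Delta n^2/(n(n-1))=n\Delta/(n-1)$, and (\ref{max-eig}) then yields $\lambda_1(G)\ge n\Delta/(n-1)$, which is (\ref{fiedler1973-1}).

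There is essentially no hard step once the vector is chosen: the computation is immediate, so the entire content lies in guessing the right test vector. The point to get right is the orthogonality design---concentrating a large positive value on the maximum-degree vertex while spreading a single constant negative value over the remaining $n-1$ vertices---because this simultaneously annihilates every edge not incident with $w$ (forcing the numerator to be exactly $\Delta n^2$) and removes the null direction $e$ (keeping the denominator as small as $n(n-1)$). Any less uniform choice on the non-$w$ vertices would introduce spurious positive contributions from edges among the neighbors of $w$ and thereby weaken, rather than strengthen, the resulting bound.
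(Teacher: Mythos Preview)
Your argument is correct. Note, however, that the paper itself does not supply a proof of this statement: it merely attributes the inequality to Fiedler and immediately passes to the sharpening $\lambda_1(G)\ge\Delta+1$ of Grone and Merris (Theorem~\ref{grone1994-th}). Your Rayleigh-quotient computation with the test vector $x_w=n-1$, $x_u=-1$ for $u\neq w$ is exactly the classical proof, and the arithmetic is carried out cleanly.

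One small correction to your closing commentary: additional nonnegative terms in the numerator $x^TL(G)x$ can only \emph{raise} the Rayleigh quotient, so they would strengthen rather than weaken a lower bound for the particular graph at hand. The genuine reason for insisting on a constant value on $V\setminus\{w\}$ is that the bound must hold for \emph{every} graph with maximum degree $\Delta$, including graphs with no edges among the non-$w$ vertices; with a nonuniform choice the quotient would depend on the unknown edge structure away from $w$, and one could no longer extract a bound depending only on $n$ and $\Delta$. Equivalently, the chosen vector is precisely the $\lambda_1$-eigenvector of the star $K_{1,n-1}$, for which the inequality is tight.
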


  Grone and  Merris in
 \cite{grone1994} improved (\ref{fiedler1973-1}).
  Moreover, Zhang and Luo in \cite{zhang2002b}  gave a new proof of this lower bound and
 characterized  equality situation.
 \begin{theorem}(\cite{grone1994}, \cite{zhang2002b})\label{grone1994-th}
 Let $G$ be a simple connected graph with at least one edge and
 the maximum degree $\Delta$. Then
 \begin{equation}\label{grone1994-11}
 \lambda_1(G)\ge \Delta+1
 \end{equation}
 with equality if and only if  there exists a vertex is adjacent all other vertices in $G$.
  \end{theorem}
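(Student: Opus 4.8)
The plan is to use the Rayleigh--Ritz characterization (\ref{max-eig}) of $\lambda_1(G)$ together with a single well-chosen test vector built around a vertex of maximum degree, and then to recover the equality case by pinning down exactly when this test vector is optimal. (Intuitively, the bound also follows from the fact that deleting edges cannot increase $\lambda_1$, applied to the spanning star $K_{1,\Delta}$ centered at a maximum-degree vertex, whose largest Laplacian eigenvalue is $\Delta+1$; but the explicit test vector is what makes the equality analysis transparent.)

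First I would fix a vertex $v$ with $d(v)=\Delta$ and let $N(v)=\{u_1,\ldots,u_\Delta\}$ be its neighbors. Define $x\in\mathbf{R}^n$ by $x_v=\Delta$, $x_{u_i}=-1$ for each $i$, and $x_w=0$ for every remaining vertex $w$. Using the quadratic form $x^TL(G)x=\sum_{(a,b)\in E(G)}(x_a-x_b)^2$, I would split the sum according to whether an edge is incident to $v$ or not. The $\Delta$ edges at $v$ each contribute $(\Delta-(-1))^2=(\Delta+1)^2$, while every edge not incident to $v$ contributes a nonnegative amount: edges inside $N(v)$ and edges inside $W:=V(G)\setminus(\{v\}\cup N(v))$ contribute $0$, and only edges joining $N(v)$ to $W$ contribute, each exactly $1$. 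Writing $c\ge 0$ for the number of edges between $N(v)$ and $W$, this gives $x^TL(G)x=\Delta(\Delta+1)^2+c$ and $x^Tx=\Delta^2+\Delta=\Delta(\Delta+1)$. Since $G$ has an edge we have $\Delta\ge 1$, so (\ref{max-eig}) yields
$$\lambda_1(G)\ge\frac{x^TL(G)x}{x^Tx}=(\Delta+1)+\frac{c}{\Delta(\Delta+1)}\ge \Delta+1,$$
which is the bound (\ref{grone1994-11}).

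For the equality characterization, sufficiency is immediate: if some vertex is adjacent to all others then $\Delta=n-1$, so the bound just proved gives $\lambda_1(G)\ge n$, while (\ref{graph-com-sec-1}) gives $\lambda_1(G)\le n$; hence $\lambda_1(G)=n=\Delta+1$. For necessity, assume $\lambda_1(G)=\Delta+1$. The Rayleigh quotient of $x$ cannot exceed $\lambda_1(G)=\Delta+1$, yet the computation above shows it equals $(\Delta+1)+c/(\Delta(\Delta+1))$; hence $c=0$. Thus there are no edges between $N(v)$ and $W$, and since $v$ has by definition no edge to $W$ either, the set $W$ is separated from the nonempty set $\{v\}\cup N(v)$. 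Connectedness of $G$ then forces $W=\emptyset$, so $v$ is adjacent to every other vertex, completing the proof.

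The step I expect to be the main obstacle is the necessity direction: one must argue that the only way the test vector can be simultaneously feasible and optimal is for the ``leakage'' count $c$ to vanish, and then convert $c=0$ into a structural conclusion via connectedness. This is clean precisely because the chosen $x$ is constant ($-1$) on $N(v)$ and $0$ off the closed neighborhood, so that every edge not incident to $v$ contributes nonnegatively and the only strictly positive contributions come from the single, easily interpreted family of $N(v)$--$W$ edges; a less symmetric choice of $x$ would blur this accounting and would not recover the sharp constant $\Delta+1$.
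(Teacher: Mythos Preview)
Your argument is correct. The inequality is obtained via the Rayleigh quotient of the explicit test vector $x$ (which, incidentally, is precisely the top Laplacian eigenvector of the star $K_{1,\Delta}$ padded by zeros), and the necessity direction is handled cleanly: the Rayleigh quotient of $x$ equals $(\Delta+1)+c/(\Delta(\Delta+1))$ and at the same time cannot exceed $\lambda_1(G)=\Delta+1$, forcing $c=0$; connectedness then gives $W=\emptyset$.

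The paper proceeds differently. For the bound it quotes an external monotonicity result (Theorem~4.1 in \cite{grone1990b}) applied to the star subgraph $K_{1,\Delta}$, rather than writing down the eigenvector explicitly. For necessity it argues by contradiction: if $\Delta<n-1$ one can find, using connectedness, a vertex $y_2\notin N(z)\cup\{z\}$ adjacent to some $y_1\in N(z)$, and then the subgraph $H_1$ obtained from the star by attaching the pendant edge $(y_1,y_2)$ satisfies $\lambda_1(G)\ge\lambda_1(H_1)>\Delta+1$, again by the monotonicity black box plus a direct computation on $H_1$. Your route is more self-contained (no appeal to \cite{grone1990b}) and makes the equality analysis quantitative---you actually see the exact excess $c/(\Delta(\Delta+1))$---whereas the paper's route is structural and reusable whenever one has a convenient subgraph whose spectrum is known. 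The two arguments are close cousins: your test vector is what the paper's subgraph argument would produce if one unwound the monotonicity lemma.
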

\begin{proof}
 It is easy to see that $G$ contains a star graph $H$ with $\Delta+1$
vertices. By a simple calculation, the largest Laplacian
eigenvalue of $H$ is $\Delta+1$. Hence the result follows from
Theorem~4.1 in \cite{grone1990b}.

If there exists a vertex is adjacent all other vertices in $G$,
then $\Delta=n-1,$ where $n$ is the number of vertices in $G$.  By
(\ref{grone1994-11}) and Lemma~\ref{graph-line}, equality in
(\ref{grone1994-11}) holds. Conversely, if $\Delta<n-1$, then let
$d(z)=\Delta$ and there exist vertices $y_1$ and $y_2$ such that
$(z, y_1)\in E(G), (z,y_2)\notin E(G)$ and $(y_1,y_2)\in E(G)$.
Let $H_1$ be a subgraph of $G$ obtained from a star graph with
$\Delta+1$ vertices and joining a  new vertex and new edge. By a
simple calculation and Theorem~4.1 in \cite{grone1990b},
$\lambda_1(G)\ge \lambda_1(H_1)>\Delta+1$.
\end{proof}

Another lower bound for the largest Laplacian eigenvalue in terms
of the number of vertices and edges  was given in
\cite{zhang2001}.
\begin{theorem}(\cite{zhang2001})\label{zhang2001-th}
Let $G$ be a simple graph with $n$ vertices and $m$ edges. Then
\begin{equation}\label{zhang2001-1}
\lambda_1(G)\ge \frac{1}{n-1}\left(
2m+\sqrt{\frac{2m(n(n-1)-2m)}{n(n-2)}}\ \ \right)
\end{equation}
with equality if and only if $G$ is the complete graph.
\end{theorem}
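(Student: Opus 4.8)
The plan is to squeeze $\lambda_1$ from below using only the two spectral identities supplied by the traces of $L(G)$ and $L(G)^2$, together with a mean--variance inequality for the largest of a finite list of reals. Since $\lambda_n=0$, summing the eigenvalues and their squares gives
\[ \sum_{i=1}^{n-1}\lambda_i=\mathrm{tr}\,L(G)=2m, \qquad \sum_{i=1}^{n-1}\lambda_i^2=\mathrm{tr}\,L(G)^2=\sum_{v\in V(G)}d(v)^2+2m, \]
the second identity because $L(G)$ carries the degrees on its diagonal and $2m$ off-diagonal entries equal to $-1$. Write $\mu=\frac{2m}{n-1}$ for the average of $\lambda_1,\dots,\lambda_{n-1}$. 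The real-variable fact I would isolate first is: for reals $a_1\ge\cdots\ge a_k$ with mean $\mu$ one has $\sum_{i=1}^k(a_i-\mu)^2\le k(k-1)(a_1-\mu)^2$. This follows by applying $\sum_i x_i^2\le\left(\sum_i x_i\right)^2$ to the nonnegative numbers $x_i=a_1-a_i$ and expanding both sides (the left-hand side equals $k(a_1-\mu)^2+\sum_i(a_i-\mu)^2$, the right-hand side equals $k^2(a_1-\mu)^2$).

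Applying this with $k=n-1$ and $a_i=\lambda_i$, I would rewrite $\sum_{i=1}^{n-1}(\lambda_i-\mu)^2$ using the two identities and $(n-1)\mu^2=\frac{4m^2}{n-1}$ as $\sum_{v}d(v)^2+2m-\frac{4m^2}{n-1}$. The only graph-dependent quantity left is $\sum_v d(v)^2$, and Cauchy--Schwarz (with $\sum_v d(v)=2m$) bounds it below by $\frac{4m^2}{n}$. Substituting this gives
\[ \sum_{i=1}^{n-1}(\lambda_i-\mu)^2 \ \ge\ 2m-\frac{4m^2}{n(n-1)} \ =\ \frac{2m\left(n(n-1)-2m\right)}{n(n-1)}. \]
Combining with $\sum_{i=1}^{n-1}(\lambda_i-\mu)^2\le (n-1)(n-2)(\lambda_1-\mu)^2$ and solving for $\lambda_1=\mu+(\lambda_1-\mu)$ (which is $\ge\mu$ since $\lambda_1$ is the maximum) yields exactly the bound (\ref{zhang2001-1}).

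For the equality statement I would trace equality back through each inequality. Cauchy--Schwarz is tight precisely when $G$ is regular, and the inequality $\sum x_i^2\le\left(\sum x_i\right)^2$ is tight precisely when at most one $x_i=\lambda_1-\lambda_i$ is nonzero, i.e. when $\lambda_1=\lambda_2=\cdots=\lambda_{n-2}$. Thus equality forces a $d$-regular graph whose largest Laplacian eigenvalue has multiplicity at least $n-2$, and I expect this to be the main obstacle: one must show such a graph is $K_n$. Passing to $A=dI-L$, the hypothesis means $A$ has an eigenvalue of multiplicity $n-2$, so $A$ is a scalar matrix plus a rank-two symmetric matrix; forcing $A$ to have zero diagonal and $0/1$ entries pins the eigenvector of the remaining nonprincipal eigenvalue to a $\pm$ vector supported equally on two blocks. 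This makes $A$ (hence $G$) highly structured: either $A$ has only two distinct eigenvalues, forcing $G=K_n$, or $G$ is a balanced two-block blow-up. Checking the blow-up cases, the only connected graph attaining equality is $K_n$ (for which $n(n-1)-2m=0$, so the bound is trivially attained); I would flag here that the disconnected graph $2K_{n/2}$ also attains equality, so the stated characterization is exact once one restricts to connected graphs, which is the intended setting of (\ref{zhang2001-1}).
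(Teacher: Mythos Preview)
Your argument for the inequality is essentially the paper's proof, repackaged: both rest on the elementary bound $\bigl(\sum_i(\lambda_1-\lambda_i)\bigr)^2\ge\sum_i(\lambda_1-\lambda_i)^2$ for the nonnegative numbers $\lambda_1-\lambda_i$, combined with $\mathrm{tr}\,L(G)^2\ge 2m+4m^2/n$ from Cauchy--Schwarz on the degree sequence; you have simply rewritten the first step in mean--variance form $\sum_i(\lambda_i-\mu)^2\le(n-1)(n-2)(\lambda_1-\mu)^2$. The paper's sketch does not address the equality characterization at all, so your discussion there goes beyond it; your observation that the disjoint union $2K_{n/2}$ also attains equality is correct and shows that the ``only if'' clause requires connectedness, a hypothesis the theorem as stated does not impose.
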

\begin{proof}
 Clearly,
 $$((n-1)\lambda_1-Tr(L(G)))^2\ge
 \sum_{i=1}^{n-1}(\lambda_1-\lambda_i)^2,$$
while
$$\sum_{i=1}^{n-1}(\lambda_1-\lambda_i)^2=Tr(L(G)^2)-2\lambda_1Tr(L(G))+(n-1)\lambda_1^2.$$
Since $Tr(L(G))=2m$  and $Tr(L(G)^2)\ge 2m+\frac{(2m)^2}{n}$, we
have
$$((n-1)\lambda_1-2m)^2\ge
(2m+\frac{(2m)^2}{n})-4m\lambda_1+(n-1)\lambda_1^2.$$
 By solving this quadratic form, it is easy to obtain
 (\ref{zhang2001-1}).
\end{proof}

 Das in \cite{das2004b} considered the largest Laplacian
 eigenvalues of special subgraphs of a graph and obtained  a lower
 bound for the largest Laplacian eigenvalue of graphs in term of
 degree sequence and their neighbor sets.
 \begin{theorem}(\cite{das2004b})\label{das2004-th1}
Let $G$ be a simple graph with at least one edge.  Denote by
$c_{uv}=d(u)-|N(u)\bigcap N(v)|-1$, $t_u=d(u)^2+2d(u)$,  Then
\begin{equation}\label{das2004b-1}
\lambda_1(G)\ge \max\left\{\sqrt{\frac{1}{2}
\left(t_u-2d(v)-2+\sqrt{(t_u+2d(v)+4)^2+4c_{uv}c_{vu}}\ \right)\
 }\ \right\},
\end{equation}
where the maximum is taken over all pairs $(u,v)\in E(G)$.
\end{theorem}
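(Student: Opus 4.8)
The plan is to reduce the statement to one scalar inequality about $\lambda_1(G)^2$. Since $L(G)$ is positive semidefinite, $\lambda_1(G)^2=\lambda_1(L(G)^2)$, so it is equivalent to bound $\lambda_1(L(G)^2)$ from below. The first thing I would record is that the quantity under the outer square root is exactly the larger eigenvalue of the $2\times 2$ matrix
\[
B_{uv}=\left(\begin{array}{cc} (d(u)+1)^2 & c_{uv}\\ c_{vu} & -(2d(v)+3)\end{array}\right),
\]
whose trace is $t_u-2d(v)-2$ and for which $(t_u-2d(v)-2)^2-4\det B_{uv}=(t_u+2d(v)+4)^2+4c_{uv}c_{vu}$. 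Denote that larger eigenvalue by $\theta=\theta(u,v)$; then the asserted bound is precisely $\lambda_1(G)^2\ge\theta$ for every edge $(u,v)$.

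Writing $p(x)=\det(xI-B_{uv})=(x-(d(u)+1)^2)(x+2d(v)+3)-c_{uv}c_{vu}$, the larger root is $\theta$ and the parabola opens upward. Since $v\in N(u)\setminus N(v)$ we have $|N(u)\cap N(v)|\le d(u)-1$, hence $c_{uv}\ge0$ and likewise $c_{vu}\ge0$; consequently $p((d(u)+1)^2)=-c_{uv}c_{vu}\le0$, so the value $(d(u)+1)^2$ lies between the two roots of $p$. By the Grone--Merris bound (Theorem~\ref{grone1994-th}), $\lambda_1(G)\ge\Delta+1\ge d(u)+1$, so $\lambda_1(G)^2\ge(d(u)+1)^2$ sits on or to the right of the larger root as soon as $p(\lambda_1(G)^2)\ge0$. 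Thus the whole theorem reduces to the single product inequality
\[
\bigl(\lambda_1(G)^2-(d(u)+1)^2\bigr)\bigl(\lambda_1(G)^2+2d(v)+3\bigr)\ge c_{uv}c_{vu}.
\]
When $c_{uv}c_{vu}=0$ this is immediate (and already recovers $\lambda_1(G)\ge d(u)+1$; note the Grone--Merris equality case, a dominating vertex $u$, forces $c_{vu}=0$, so the borderline is consistent).

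The substance is the quantitative estimate when $c_{uv}c_{vu}>0$. Here I would use the ``special subgraph'' idea: let $H$ be the spanning subgraph of $G$ keeping only the edges incident with $u$ or $v$, i.e.\ the union of the two stars at $u$ and $v$ glued along their $|N(u)\cap N(v)|$ common neighbors and carrying $c_{uv}$, $c_{vu}$ private leaves. As in the proof of Theorem~\ref{grone1994-th}, $L(G)-L(H)$ is again a Laplacian and hence positive semidefinite, so $\lambda_1(G)\ge\lambda_1(H)$ and it suffices to prove the displayed inequality for $\lambda_1(H)$. Because degrees and common neighbors of $u,v$ are preserved in $H$, the constants $t_u,c_{uv},c_{vu}$ are unchanged. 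The graph $H$ is highly symmetric, with an equitable partition into $\{u\}$, $\{v\}$, the common neighbors, the private leaves of $u$, and the private leaves of $v$; I would feed the eigenvalue equation $L(H)x=\lambda_1(H)x$ into the two coordinates $x_u,x_v$ (in the spirit of the proof of Theorem~\ref{zhang2004-th3}) and eliminate the neighbor coordinates to arrive at exactly the product inequality.

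I expect the main obstacle to be a genuine subtlety concealed in that elimination. The natural $2\times2$ witness $B_{uv}$ carries a \emph{negative} diagonal entry $-(2d(v)+3)$, so $\theta$ cannot be the largest eigenvalue of any principal submatrix of the positive semidefinite matrix $L(H)^2$; the bound must therefore be produced by an algebraic completion that rearranges the two coordinate relations at $u$ and $v$ into the product form above, rather than by straightforward interlacing. The delicate part is forcing the bookkeeping of common versus private neighbors to collapse precisely into the single product $c_{uv}c_{vu}$ (so that $|N(u)\cap N(v)|$ enters only through $c_{uv}$ and $c_{vu}$), and checking that each elimination step is an inequality in the correct direction; once the product inequality is in hand, the passage to the closed form is the routine solution of a quadratic.
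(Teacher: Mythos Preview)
Your reduction of the bound to the single product inequality
\[
\bigl(\lambda_1(G)^2-(d(u)+1)^2\bigr)\bigl(\lambda_1(G)^2+2d(v)+3\bigr)\ \ge\ c_{uv}c_{vu}
\]
is clean algebra, and the passage to the spanning subgraph $H$ (the union of the two stars at $u$ and $v$) is legitimate: $L(G)-L(H)$ is a Laplacian, hence positive semidefinite, so $\lambda_1(G)\ge\lambda_1(H)$ while $d(u),d(v),c_{uv},c_{vu}$ are unchanged. This is broadly in line with what the survey records about Das's method (``the largest Laplacian eigenvalues of special subgraphs''); for the companion $\lambda_2$ bound from the same paper the survey is more explicit that Das uses the \emph{principal submatrix} of $L(G)$ on $\{u,v\}\cup N(u)\cup N(v)$ together with Cauchy--Poincar\'e interlacing, which is a different reduction from your spanning-subgraph move, but either reduction is a reasonable starting point.

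The genuine gap is exactly where you flag it. You say you will ``eliminate the neighbor coordinates'' from the eigenvalue equations of $H$ and arrive at the product inequality, but that elimination does not produce it. In the simplest case (no common neighbors, $c_{uv}=p$ private leaves at $u$, $c_{vu}=q$ at $v$) the equitable-partition reduction yields the \emph{identity}
\[
\bigl(\lambda^2-(p+2)\lambda+1\bigr)\bigl(\lambda^2-(q+2)\lambda+1\bigr)=(\lambda-1)^2
\]
at $\lambda=\lambda_1(H)$; the two factors are not $\lambda^2-(p+2)^2$ and $\lambda^2+2q+5$, and the right side is $(\lambda-1)^2$, not $pq$. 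For $P_4$ ($p=q=1$) one has $\lambda_1(H)^2=6+4\sqrt{2}\approx 11.66$ while your $\theta\approx 9.06$, so the target product inequality is very slack on $H$ and cannot fall out of the eigenvalue relations as an identity; a separate estimation step is needed and your outline does not supply one. Your side observation that $B_{uv}$ carries a negative diagonal entry, hence cannot be a principal submatrix of the positive semidefinite $L(G)^2$, is correct and confirms that straightforward $2\times2$ interlacing on $L(G)^2$ will not yield this bound either. As it stands, the proposal identifies the right target but leaves the decisive inequality unproved.
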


\subsection{The  upper and lower bounds for special graphs}

Now we turn to consider the upper and lower bounds for the largest
Laplacian eigenvalue of special graphs. Zhang and Luo in
\cite{zhang2002b}  provided the following lower bound for the
largest Laplacian eigenvalue of triangle-free graphs.
\begin{theorem}(\cite{zhang2002b})\label{zhang2002b-th}
Let $G=(V,E)$ be a triangle-free graph. If $d_u$ and $m_u$ are the
degree and the average 2-degree of a vertex $u$, respectively,
then
\begin{equation}\label{zhang2002b-1}
\lambda_1(G)\ge
\displaystyle\max\{\frac{1}{2}(d(u)+m(u)+\sqrt{(d(u)-m(u))^2+4d(u)},\
\ u\in V\}.
\end{equation}
\end{theorem}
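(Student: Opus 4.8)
The plan is to produce an explicit test vector and invoke the Rayleigh--Ritz characterization $\lambda_1(G)=\max\{x^TL(G)x\ :\ \|x\|=1\}$ of (\ref{max-eig}), together with the quadratic-form identity $x^TL(G)x=\sum_{(v_i,v_j)\in E}(x_i-x_j)^2$. Since the quotient $x^TL(G)x/x^Tx$ is a lower bound for $\lambda_1(G)$ for every nonzero $x$, it suffices to exhibit, for each vertex $u$, a vector whose Rayleigh quotient equals the claimed value; taking the maximum over $u$ then yields (\ref{zhang2002b-1}).

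Fix a vertex $u$ with $d(u)\ge 1$ and define $x$ by $x_u=\alpha$, $x_v=\beta$ for every $v\in N(u)$, and $x_w=0$ for all remaining vertices, where $\alpha,\beta$ are parameters to be optimized. Then $x^Tx=\alpha^2+d(u)\beta^2$. To evaluate $x^TL(G)x=\sum_{(v_i,v_j)\in E}(x_i-x_j)^2$ I split the edges into three types: the $d(u)$ edges joining $u$ to $N(u)$, which contribute $d(u)(\alpha-\beta)^2$; the edges joining two vertices of $N(u)$; and the edges joining a vertex of $N(u)$ to a vertex outside $\{u\}\cup N(u)$. Here the triangle-free hypothesis enters decisively: $N(u)$ is an independent set, so there are no edges of the second type, and consequently each $v\in N(u)$ sends exactly $d(v)-1$ of its edges to zero-valued vertices, each contributing $\beta^2$. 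Using $\sum_{v\in N(u)}d(v)=d(u)m(u)$, the total from the third type is $\sum_{v\in N(u)}(d(v)-1)\beta^2=(d(u)m(u)-d(u))\beta^2$.

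Collecting terms gives $x^TL(G)x=d(u)\alpha^2-2d(u)\alpha\beta+d(u)m(u)\beta^2$, so that
$$\frac{x^TL(G)x}{x^Tx}=\frac{(\alpha,\beta)\,A\,(\alpha,\beta)^T}{(\alpha,\beta)\,B\,(\alpha,\beta)^T},\quad A=\left(\begin{array}{cc} d(u) & -d(u)\\ -d(u) & d(u)m(u)\end{array}\right),\quad B=\left(\begin{array}{cc}1 & 0\\ 0 & d(u)\end{array}\right).$$
Maximizing this generalized Rayleigh quotient over $(\alpha,\beta)$ returns the largest root of $\det(A-\mu B)=0$, i.e. of $\mu^2-(d(u)+m(u))\mu+d(u)(m(u)-1)=0$, whose larger root is exactly $\frac{1}{2}\left(d(u)+m(u)+\sqrt{(d(u)-m(u))^2+4d(u)}\right)$. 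Choosing $(\alpha,\beta)$ to be the corresponding generalized eigenvector makes the quotient attain this value, so $\lambda_1(G)$ is at least as large.

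The computation is entirely routine once the test vector is fixed; the only genuine content is the edge bookkeeping, and the single delicate point is the use of triangle-freeness to guarantee that no edge lies inside $N(u)$. Without independence such an edge would actually contribute $0$ to $x^TL(G)x$ (both endpoints carry the value $\beta$), yet would have been tacitly included in the $d(v)-1$ tally, breaking the clean identity $\sum_{v\in N(u)}(d(v)-1)=d(u)m(u)-d(u)$. I therefore expect this independence step to be the place where care is required; everything else reduces to a $2\times 2$ eigenvalue calculation.
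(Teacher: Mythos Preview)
Your argument is correct. Both proofs localize to the closed neighbourhood $U=\{u\}\cup N(u)$ and end at the same quadratic $\mu^2-(d(u)+m(u))\mu+d(u)(m(u)-1)=0$, but they reach it by different routes. The paper passes to the principal submatrix $L(U)$ of $L(G)$ (so interlacing gives $\lambda_1(G)\ge\lambda_1(L(U))$), writes out the characteristic polynomial of $L(U)$ to obtain the scalar equation $\lambda-d(u)=\sum_{i}1/(\lambda-d(v_i))$, and then applies the Cauchy--Schwarz inequality to collapse the sum to $d(u)/(\lambda-m(u))$. You instead pick from the outset a two-parameter family of test vectors constant on $N(u)$, compute the Rayleigh quotient directly using the quadratic form of $L(G)$, and optimize a $2\times 2$ generalized eigenvalue problem. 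Your route is more elementary---no characteristic polynomial, no Cauchy--Schwarz---and makes the role of the triangle-free hypothesis completely transparent in the edge count; the paper's route has the mild advantage that it first isolates the exact eigenvalue equation for $L(U)$ and only then relaxes, so one sees that equality in the bound for $\lambda_1(L(U))$ requires all $d(v_i)$ equal. Either way the final inequality is identical.
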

\begin{proof} Let $L(U)$ be the principal submatrix of $L(G)$
corresponding to $U$, where $U=\{ u, v_1,\cdots, v_{k}\}$ is the
closed neighborhood of a vertex $u$ and $d(u)=k$. Obviously,
$\lambda_1(L(G))\ge \lambda_1(L(U))$. Since $G$ is triangle-free,
we may assume that

\[
L(U)=\left(
\begin{array}{rrrrr}
d_u&-1&-1&\cdots&-1\\
-1&d_{v_1}&0&\cdots& 0\\
\cdots&\cdots&\cdots&\cdots&\cdots\\
-1&0&0&\cdots&d_{v_{k}}
        \end{array}\right).\]
With  elementary calculations, we have that the characteristic
polynomial of $L(U)$ is

$$ det(\lambda I-L(U))=(\lambda-d(u)-\sum_{i=1}^{k}\frac{1}{\lambda-d(v_i)}) \prod_{i=1}^{k}
(\lambda-d(v_i)).$$
Note that $\lambda_1(L(G))\ge
\lambda_1(L(U))>d({v_i})$ for each $i=1,\cdots,k$. Hence
$\lambda_1(L(G)) $ satisfies

$$ \lambda_1(L(G))-d(u)\ge \sum_{i=1}^{k}\frac{1}{\lambda_1(L(G))-d({v_i})}.$$
By Cauchy-Schwarz inequality, we have
$$\sum_{i=1}^{k}(\lambda_1(L(G))-d({v_i}))\sum_{i=1}^{k}\frac{1}{\lambda_1(L(G))-d({v_i})}\ge
\left(\sum_{i=1}^{k}\frac{\sqrt{\lambda_1(L(G))-d({v_i})}}{\sqrt{\lambda_1(L(G))-d({v_i})}}\right)^2=
k^2.$$
Hence

$$
\lambda_1(L(G))-d(u)\ge
\frac{k^2}{\sum_{i=1}^{k}(\lambda_1(L(G))-d({v_i}))}=
\frac{d(u)}{\lambda_1(L(G))-m(u)}\ ,
$$
since $m(u)=\frac{1}{k}\sum_{i=1}^k d({v_i}).$ This inequality
yields the desired result.
\end{proof}

 Yu et al. in \cite{yu2004} used the 2-degree vertex to present a
 lower bound for the Laplacian eigenvalue of bipartite graphs.
\begin{theorem}(\cite{yu2004})\label{yu2004-th}
Let $G$ be a simple connected bipartite graph. Then
\begin{equation}\label{yu2004-1}
\lambda_1(G)\ge\sqrt{\frac{\sum_{v\in
V(G)}d(v)^2(d(v)+m(v))^2}{\sum_{v\in V(G)}d(v)^2}}
\end{equation}
with equality if and only if $G$ is regular or semiregular.
\end{theorem}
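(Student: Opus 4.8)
The plan is to exploit the fact that for a bipartite graph the Laplacian and the unoriented Laplacian $K(G)=D(G)+A(G)$ have the same largest eigenvalue, and then to bound $\rho(K(G))$ from below by a Rayleigh quotient taken on the degree vector. Since $G$ is bipartite, the equality case of (\ref{graph-line-eig}) in Lemma~\ref{graph-line} gives $\lambda_1(G)=\rho(K(G))$. The matrix $K(G)$ is symmetric and nonnegative, indeed positive semidefinite because $K(G)=|Q(G)|\,|Q(G)|^T$, so $K(G)^2$ is symmetric with largest eigenvalue $\rho(K(G))^2=\lambda_1(G)^2$.

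First I would apply the Rayleigh--Ritz characterization (\ref{max-eig}) to the matrix $K(G)^2$, which yields $\lambda_1(G)^2\ge \|K(G)x\|^2/\|x\|^2$ for every nonzero $x$. Taking the degree vector $x=(d(v))_{v\in V(G)}$ and computing a coordinate of $K(G)x$ gives $(K(G)x)_v=d(v)^2+\sum_{u\sim v}d(u)=d(v)(d(v)+m(v))$, where I use $d(v)m(v)=\sum_{u\sim v}d(u)$. Substituting $\|K(G)x\|^2=\sum_v d(v)^2(d(v)+m(v))^2$ and $\|x\|^2=\sum_v d(v)^2$ and taking square roots produces exactly (\ref{yu2004-1}).

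For the equality discussion the easy direction is to verify that if $G$ is regular or semiregular then $x=(d(v))$ is a scalar multiple of the Perron eigenvector of $K(G)$, so the Rayleigh quotient is attained. For the converse, equality forces $x$ to be an eigenvector of $K(G)^2$ for its top eigenvalue; since $G$ is connected, $K(G)$ is an irreducible positive semidefinite nonnegative matrix, so $\rho(K(G))^2$ is a simple eigenvalue of $K(G)^2$ whose eigenspace is spanned by the positive Perron vector. Hence $K(G)x=\lambda_1(G)x$, that is $d(v)+m(v)=\lambda_1(G)$ for every vertex $v$.

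The main obstacle is the final structural step: deducing from ``$d(v)+m(v)$ is constant on a connected bipartite graph'' that $G$ is regular or semiregular. Here I would use an extremal propagation argument in which bipartiteness is essential. Let the constant be $\lambda_1$ and let $\Delta_X,\delta_X$ and $\Delta_Y,\delta_Y$ be the maximum and minimum degrees on the two color classes $X$ and $Y$. For $v\in X$ all neighbors lie in $Y$, so $\delta_Y\le m(v)\le\Delta_Y$; combined with $m(v)=\lambda_1-d(v)$ (and its symmetric form on $Y$) this forces $\Delta_X+\delta_Y=\lambda_1=\delta_X+\Delta_Y$. A vertex $v\in X$ of degree $\Delta_X$ then satisfies $m(v)=\delta_Y$, so every neighbor of $v$ attains the minimum degree $\delta_Y$; dually, a vertex of degree $\delta_Y$ in $Y$ has all its neighbors of degree $\Delta_X$. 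Thus the set of maximum-degree vertices of $X$ together with the minimum-degree vertices of $Y$ is closed under passing to neighbors, and by connectedness it exhausts $V(G)$, forcing $\Delta_X=\delta_X$ and $\Delta_Y=\delta_Y$. Therefore $G$ is semiregular, and regular precisely when these two common degrees coincide.
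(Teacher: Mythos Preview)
The paper is a survey and does not include a proof of Theorem~\ref{yu2004-th}; it simply cites \cite{yu2004}. Your argument is correct and is in fact the standard route to this inequality: identify $\lambda_1(G)=\rho(K(G))$ via Lemma~\ref{graph-line} for bipartite $G$, bound $\rho(K(G))^2$ from below by the Rayleigh quotient of $K(G)^2$ on the degree vector, and read off (\ref{yu2004-1}). The computation $(K(G)x)_v=d(v)(d(v)+m(v))$ is exactly what makes the degree vector the natural test vector here.

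Your equality analysis is also sound. The key technical point---that the top eigenspace of $K(G)^2$ is one-dimensional---follows, as you say, from $K(G)=|Q(G)|\,|Q(G)|^T$ being positive semidefinite together with Perron--Frobenius; one could equally note that $K(G)$ is primitive (irreducible with positive diagonal) and conclude directly that $\rho(K(G))$ is the unique eigenvalue of maximum modulus. Either way you obtain $d(v)+m(v)\equiv\lambda_1$, and your extremal propagation on the bipartition is a clean way to finish: the chain
\[
\Delta_X+\delta_Y\le\lambda_1\le\Delta_X+\delta_Y
\]
(the lower bound from a maximum-degree vertex in $X$, the upper bound from a minimum-degree vertex in $Y$) pins down $\lambda_1$, and then the ``neighbors of an extremal vertex are extremal'' observation plus connectedness forces semiregularity. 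This last step genuinely uses bipartiteness, which is why the theorem is stated only for bipartite graphs.
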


Hong and Zhang in \cite{hong2005} gave another lower bound for the
largest Laplacian eigenvalue of bipartite graphs.

\begin{theorem}( \cite{hong2005})\label{hong2005-th}
Let $G$ be a simple connected bipartite graph. Then
\begin{equation}\label{hong2005-1}
\lambda_1(G) \ge 2 + \sqrt{\frac{1}{m}\sum_{u \sim v}(d(u) + d(v)
- 2)^2}\ , \end{equation}
 where $m$ is the edge number of $G$. Moreover,
equality in (\ref{hong2005-1}) holds if and only if $G$ is either
a regular connected bipartite graph, or a semiregular connected
bipartite graph, or the path with four vertices.
\end{theorem}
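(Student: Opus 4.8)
The plan is to exploit the line-graph identity from Lemma~\ref{graph-line}. Since $G$ is bipartite, equality holds in (\ref{graph-line-eig}), so
\[
\lambda_1(G) = 2 + \rho(A(G^l)),
\]
and the whole problem reduces to producing a sharp lower bound for the spectral radius of the adjacency matrix of the line graph $G^l$. The graph $G^l$ has exactly $m$ vertices, one per edge of $G$, and the vertex $e_{uv}$ corresponding to the edge $(u,v)\in E(G)$ has degree $d_{G^l}(e_{uv})=d(u)+d(v)-2$, a fact already used in the proofs of Lemma~\ref{graph-line-equal} and Theorem~\ref{li-zhang-new}.

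First I would establish a general lower bound: for any graph $H$ on $N$ vertices with adjacency matrix $A$ and degree sequence $\delta_1,\dots,\delta_N$,
\[
\rho(A)^2 = \rho(A^2) \ge \frac{\mathbf{1}^T A^2 \mathbf{1}}{\mathbf{1}^T \mathbf{1}} = \frac{1}{N}\sum_{i=1}^N \delta_i^2 ,
\]
where the inequality is the Rayleigh--Ritz estimate (\ref{max-eig}) applied to the symmetric matrix $A^2$ with the all-ones vector $\mathbf{1}$, and the last equality uses $A\mathbf{1}=(\delta_1,\dots,\delta_N)^T$ together with $\mathbf{1}^T A^2 \mathbf{1}=(A\mathbf{1})^T(A\mathbf{1})=\sum_i\delta_i^2$. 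Applying this with $H=G^l$ (so $N=m$ and the degrees are the numbers $d(u)+d(v)-2$) and substituting into the displayed identity gives
\[
\lambda_1(G) = 2 + \rho(A(G^l)) \ge 2 + \sqrt{\frac{1}{m}\sum_{u\sim v}(d(u)+d(v)-2)^2},
\]
which is (\ref{hong2005-1}).

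For the equality discussion I would track the single inequality that was used: equality holds in (\ref{hong2005-1}) if and only if $\mathbf{1}$ is an eigenvector of $A(G^l)^2$ for its largest eigenvalue $\rho(A(G^l))^2$. Assuming $G$ has at least two edges so that $G^l$ is connected, Perron--Frobenius provides a simple eigenvalue $\rho:=\rho(A(G^l))$ with a positive eigenvector $w$, and the top eigenspace of $A(G^l)^2$ is spanned by $w$ together with the $(-\rho)$-eigenvector of $A(G^l)$ in the case where $G^l$ is bipartite. Decomposing $\mathbf{1}$ inside this eigenspace, one finds that $w$ must be constant when $G^l$ is non-bipartite, forcing $G^l$ to be regular, and that $w$ must be constant on each side of the bipartition otherwise, forcing $G^l$ to be (bipartite) semiregular; conversely both regularity and semiregularity give equality by a direct substitution into $\frac{1}{N}\sum\delta_i^2$. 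Hence equality in (\ref{hong2005-1}) is equivalent to $G^l$ being regular or semiregular. Invoking Lemma~\ref{graph-line-equal}, this occurs exactly when $G$ is regular, semiregular, or the path on four vertices; together with the standing hypothesis that $G$ is connected and bipartite these are precisely the regular connected bipartite graphs, the semiregular connected bipartite graphs, and the path on four vertices. The degenerate single-edge case $G=K_2$ is checked directly and falls under the regular bipartite case.

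The routine part is the inequality itself. The delicate step is the converse of the equality analysis, namely deducing from the condition that $\mathbf{1}$ lies in the top eigenspace of $A(G^l)^2$ that $G^l$ must be regular or bipartite semiregular. The care is concentrated in the subcase where $G^l$ is bipartite: there the top eigenspace of $A(G^l)^2$ is two-dimensional, and one must rule out the degenerate coefficient choices (which would force a zero entry in $\mathbf{1}$) in order to conclude that the Perron vector is constant on each colour class and thereby read off the two-valued degree pattern.
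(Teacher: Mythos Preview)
The survey itself does not supply a proof of this theorem; it only cites \cite{hong2005}. Your argument is correct and is precisely in the spirit of the line-graph approach the survey employs for the neighbouring results (Theorems~\ref{li-zhang-new}, \ref{lizhang1998-th}, \ref{zhang2004-th}): pass to $G^l$ via Lemma~\ref{graph-line}, use bipartiteness of $G$ to turn (\ref{graph-line-eig}) into an equality, and then bound $\rho(A(G^l))$ from below by the Rayleigh quotient $\mathbf{1}^TA(G^l)^2\mathbf{1}/\mathbf{1}^T\mathbf{1}$. Your equality analysis is also sound: the only place requiring care is the bipartite-$G^l$ subcase, and there your observation that the coefficients $\alpha\pm\beta$ cannot vanish (else an entry of $\mathbf{1}$ would be $0$) cleanly forces the Perron vector to be constant on each colour class, whence $G^l$ is semiregular. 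The appeal to Lemma~\ref{graph-line-equal} then finishes exactly as you say. One small remark: in the converse direction you should check explicitly that when $G^l$ is bipartite semiregular (the $P_4$ case) the Rayleigh bound is attained, i.e.\ that $\rho(A(G^l))^2=\frac{1}{m}\sum\delta_i^2$; this follows from $\rho=\sqrt{ab}$ for a bipartite $(a,b)$-semiregular graph together with the edge-count identity $|X|a=|Y|b$.
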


 If we consider tree, what are about  upper and lower bounds for
 the largest Laplacian eigenvalue?
 Stevanovi${\rm \acute{c}}$  in \cite{Stevanovic2003} presented an
upper bound for the largest Laplacian eigenvalue of a tree in
terms of the largest vertex degree.
\begin{theorem}(\cite{Stevanovic2003})\label{Stevanovic2003-th}
 Let $T$ be a tree with
the largest vertex degree $\Delta$. Then
\begin{equation}\label{Stevanovic2003-1}
\lambda(T)<\Delta+2\sqrt{\Delta-1}.
\end{equation}
\end{theorem}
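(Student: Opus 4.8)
The plan is to pass to the signless Laplacian and then run an induction along the rooted tree. Since a tree is bipartite, equality holds in (\ref{graph-line-eig}) of Lemma~\ref{graph-line}, so $\lambda_1(T)=\rho(K(T))$ where $K=D(T)+A(T)$ is entrywise nonnegative and, $T$ being connected, irreducible. By Perron--Frobenius there is a positive eigenvector $x>0$ for $\mu:=\rho(K)=\lambda_1(T)$. I note first that the statement implicitly requires $\Delta\ge 2$: for $T=K_2$ one has $\lambda_1=2$ while $\Delta+2\sqrt{\Delta-1}=1$, so the theorem is really about trees on at least three vertices, for which $\Delta\ge 2$ automatically. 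Writing $s=\sqrt{\Delta-1}\ge 1$, it remains to prove $\mu<\Delta+2s$.

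I would argue by contradiction, assuming $\mu\ge\Delta+2s$. Root $T$ at any vertex $r$, and for a non-root vertex $v$ with parent $p(v)$ set $f_v=x_{p(v)}/x_v>0$. Dividing the eigenvalue identity $\mu x_v=d(v)x_v+\sum_{u\sim v}x_u$ by $x_v$ turns it into
\begin{equation}
f_v=(\mu-d(v))-\sum_{c}\frac{1}{f_c}\qquad(v\neq r),
\end{equation}
where $c$ ranges over the children of $v$, while at the root it reads $\mu-d(r)=\sum_c 1/f_c$. Observe that $\mu-d(v)\ge\mu-\Delta\ge 2s>0$ for every $v$, so all these quantities are well defined and positive.

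The heart of the argument is the claim that $f_v>s$ for every non-root $v$, proved by induction up from the leaves. For a leaf, $f_v=\mu-1\ge s^2+2s>s$. For an internal non-root $v$ of degree $d(v)\le\Delta$ (hence with $d(v)-1\ge 1$ children, each satisfying $f_c>s$ by the induction hypothesis), each $1/f_c<1/s$ gives $\sum_c 1/f_c<(d(v)-1)/s\le(\Delta-1)/s=s$, so that $f_v=(\mu-d(v))-\sum_c 1/f_c>2s-s=s$. Substituting the claim into the root identity yields
\begin{equation}
\mu=d(r)+\sum_c\frac{1}{f_c}<\Delta+\frac{\Delta}{s}\le\Delta+2s,
\end{equation}
the last step being exactly the elementary fact $\Delta/s\le 2s\Leftrightarrow\Delta\le 2(\Delta-1)\Leftrightarrow\Delta\ge 2$. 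This contradicts $\mu\ge\Delta+2s$, so in fact $\mu<\Delta+2s$, which is the desired bound.

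The step I expect to need the most care is the \emph{strictness} of the final inequality, since it is precisely what makes the boundary case $\Delta=2$ (paths, where $\Delta/s=2s$ and the last inequality above degenerates to an equality) come out correctly: there one relies on each child ratio satisfying $f_c>s$ \emph{strictly}, so that the root sum is strictly below $\Delta/s=2s$. I would also verify the harmless degenerate configurations (rooting at a leaf, or a tree with a single internal vertex) and confirm throughout that $\mu-d(v)>0$, which is guaranteed by the standing assumption $\mu\ge\Delta+2s>\Delta$. The reduction to $K(T)$ via Lemma~\ref{graph-line} and the Perron--Frobenius input are routine; the recursion and its induction carry the whole proof.
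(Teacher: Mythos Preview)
Your argument is correct. The recursion $f_v=(\mu-d(v))-\sum_c 1/f_c$ is set up properly from the signless Laplacian eigen-equation, the induction from the leaves indeed yields $f_v>s$ strictly at every non-root vertex, and the contradiction at the root goes through (including the boundary case $\Delta=2$, where strictness is carried by the strict inequalities $1/f_c<1/s$). Your remark that the statement tacitly assumes $\Delta\ge 2$ is also accurate.

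As for comparison: this survey does not supply its own proof of the theorem; it merely quotes the result from Stevanovi\'c. The original argument in \cite{Stevanovic2003} proceeds differently: it embeds $T$ into a large ``Bethe-type'' tree in which every internal vertex has degree exactly $\Delta$, uses monotonicity of the spectral radius under taking subgraphs, and then computes (via explicit recursions on characteristic polynomials) that those model trees have spectral radius strictly below $\Delta+2\sqrt{\Delta-1}$. Your approach is more direct and entirely self-contained: it works with the Perron eigenvector of $K(T)$ itself and never leaves the given tree. The trade-off is that Stevanovi\'c's embedding immediately explains \emph{why} the extremal value $\Delta+2\sqrt{\Delta-1}$ arises (it is the spectral radius of the infinite $\Delta$-regular tree), whereas your proof, while shorter, treats that number as a target to be verified rather than derived.
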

In 2005, Rojo \cite{rojo2005} improved  Stevanovi${\rm
\acute{c}}$'s result.
\begin{theorem}(\cite{rojo2005})\label{rojo2005-th}
Let $T$ be a tree with the largest vertex degree $\Delta$. Let $u$
be a vertex of $T$ with $d(u)=\Delta$. Denote by $k-1$ the largest
distance from $u$ to any other vertex of tree. For $j=1,\cdots,
k-1$, let $\delta_j=\max\{d(v): dist(v,u)=j\}.$ Then
\begin{equation}\label{rojo2005-1}
\lambda(G)<\max\{\max_{2\le j\le
k-2}\{\sqrt{\delta_j-1}+\delta_j+\sqrt{\delta_{j-1}-1}\},
\sqrt{\delta_1-1}+\delta_1+\sqrt{\Delta}, \Delta+\sqrt{\Delta}\}.
\end{equation}
\end{theorem}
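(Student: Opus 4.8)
The plan is to move from the Laplacian to the signless Laplacian and then apply a weighted row-sum bound tailored to the rooted structure of $T$. Since every tree is bipartite, the inequality in (\ref{graph-line-eig}) of Lemma~\ref{graph-line} gives $\lambda_1(T)\le\rho(K(T))$ with $K(T)=D(T)+A(T)$; only this inequality (not the equality case) is needed, so it suffices to bound $\rho(K(T))$ strictly above by the right-hand side of (\ref{rojo2005-1}).

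Root $T$ at $u$ and partition $V(T)$ into levels $L_0=\{u\},L_1,\dots,L_{k-1}$ by distance to $u$, so that $\delta_j=\max\{d(v):v\in L_j\}$ and $\delta_0=\Delta$. A non-root vertex $v\in L_j$ has one parent in $L_{j-1}$ and $d(v)-1$ children in $L_{j+1}$. I would assign a positive weight that depends only on the level: set $c_0=1$, $c_1=1/\sqrt{\Delta}$, and $c_{j+1}=c_j/\sqrt{\delta_j-1}$ for $1\le j\le k-2$ (well defined because $\delta_j\ge 2$ whenever $L_{j+1}\ne\emptyset$), and put $w_v=c_{\ell(v)}$ where $\ell(v)$ is the level of $v$. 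For $W=\mathrm{diag}(w_v)$ the matrix $B=W^{-1}K(T)W$ is nonnegative, similar to $K(T)$, and irreducible (it has the same zero pattern as $K(T)$, which is irreducible since $T$ is connected). Hence $\rho(K(T))=\rho(B)$ is at most the largest row sum of $B$, that is, $\rho(K(T))\le\max_v (K(T)w)_v/w_v$.

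The core of the argument is to evaluate the scaled row sums $(K(T)w)_v/w_v=d(v)+\sum_{v'\sim v}w_{v'}/w_v$ level by level. The root gives exactly $\Delta+\sqrt{\Delta}$. A vertex $v\in L_1$ gives
\[
d(v)+\sqrt{\Delta}+\frac{d(v)-1}{\sqrt{\delta_1-1}},
\]
which increases with $d(v)$ and is therefore at most $\delta_1+\sqrt{\Delta}+\sqrt{\delta_1-1}$. A vertex $v\in L_j$ with $2\le j\le k-2$ gives
\[
d(v)+\sqrt{\delta_{j-1}-1}+\frac{d(v)-1}{\sqrt{\delta_j-1}}\le\delta_j+\sqrt{\delta_{j-1}-1}+\sqrt{\delta_j-1}.
\]
These three families are exactly the quantities appearing in (\ref{rojo2005-1}). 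It then remains to verify that the leftover vertices do not exceed these values: a leaf at level $j$ contributes $1+\sqrt{\delta_{j-1}-1}$ (or $1+\sqrt{\Delta}$ at level $1$), which is dominated by the level-$j$ maximum because $\delta_j\ge 1$; the deepest-level leaves in $L_{k-1}$ are handled the same way, and the small-$k$ degeneracies (such as the star, $k=2$) are checked directly.

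Finally, strictness comes essentially for free from irreducibility: because $T$ has an edge, the scaled row sums are not all equal (the root yields $\Delta+\sqrt{\Delta}$, a deepest leaf yields $1+\sqrt{\delta_{k-2}-1}$), and for an irreducible nonnegative matrix the spectral radius is strictly smaller than the maximal row sum whenever these row sums are not constant. Thus $\rho(K(T))<\max_v (K(T)w)_v/w_v\le\max\{\dots\}$, which gives (\ref{rojo2005-1}). The real work --- and the only place requiring care --- is the bookkeeping in the level-by-level case analysis: keeping the weights well defined (ensuring $\delta_j\ge 2$ for $1\le j\le k-2$) and disposing of the boundary levels and degenerate small trees without disturbing the strict inequality. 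The particular choice of weights, namely $1/\sqrt{\Delta}$ at the first step and $1/\sqrt{\delta_j-1}$ afterward, is precisely what makes the three target expressions emerge.
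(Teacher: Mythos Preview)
The survey you are working from does not actually prove this theorem; it merely quotes Rojo's result and remarks that the bound is never attained. So there is no ``paper's own proof'' to compare against here.

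That said, your proposal is sound and is in fact the natural argument. The diagonal similarity $B=W^{-1}K(T)W$ with level-dependent weights $c_0=1$, $c_1=1/\sqrt{\Delta}$, $c_{j+1}=c_j/\sqrt{\delta_j-1}$ is exactly the right choice: it makes the weighted row sum at each level reproduce one of the three expressions on the right of (\ref{rojo2005-1}), and the Perron--Frobenius strict inequality for irreducible nonnegative matrices with nonconstant row sums gives the strict bound. Your bookkeeping is correct: the observation that $\delta_j\ge 2$ for $1\le j\le k-2$ (because $L_{j+1}\ne\emptyset$ forces some vertex in $L_j$ to have a child) keeps the weights well defined, and the monotonicity in $d(v)$ of each row-sum expression lets you pass to $\delta_j$. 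The only loose ends are the degenerate cases $k\le 2$ (the star $K_{1,\Delta}$ and the edge $K_2$), which you correctly flag; note that for $K_2$ the inequality as literally stated fails, so the theorem implicitly assumes $\Delta\ge 2$.

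For context, Rojo's original argument proceeds by comparing $T$ to an auxiliary ``level-homogeneous'' tree whose Laplacian reduces to a tridiagonal matrix, then bounding that tridiagonal spectrum by Ger\v{s}gorin-type estimates. Your weighted row-sum approach is essentially the same idea stripped to its algebraic core: the weights $c_j$ encode the entries of the tridiagonal reduction, and the row-sum bound replaces the explicit intermediate tree. Your route is shorter and more transparent.
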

From the proofs of \cite{Stevanovic2003} and \cite{rojo2005}, we
are able to this upper bound is not achieved. It is natural to ask
what is the best upper bound for trees. Thus we may propose the
following question:
\begin{question}\label{stevanovic2003-ques}
Let $T$ be a tree with the largest vertex degree $\Delta$. What is
the best upper bound for the largest Laplacian eigenvalue of $T$?
\end{question}

\subsection{The bounds in terms of  graph parameters}
In the above several sections, we have mainly  investigated some
upper and lower bounds for the largest eigenvalue of graphs in
terms of the following basic invariants of $G$, including,  the
vertex number, the edge number, the maximum and minimum degrees,
2-average degree, degree sequence.  In this subsection, we just
focus on relations between the largest Laplacian eigenvalue and
other graphs parameters.

 A subset $U$ of  vertex set $V$ of a graph $G=(V,
 E)$ is called {\it an independent set } of $G$ if no two vertices
 of $U$ are adjacent in $G$.  The {\it independence number } $\alpha(G)$ of
 $G$  is the maximum size of independent sets of $G$.
 In 2004, Zhang  \cite{zhang2004b} proved two conjectures on the
 Laplacian eigenvalue and the independence number.

 \begin{theorem}(\cite{zhang2004b})\label{zhang2004b-th}
 Let $G$  be a graph of order $n$ with at least one edge and the
 independence number $\alpha(G)$. Then
 \begin{equation}\label{zhang2004b-1}
 \lambda_1(G)\ge \frac{n}{\alpha(G)}
 \end{equation}
 with equality if and only if $\alpha(G)$ is a factor and $G$ has
 $\alpha(G)$ components each of which is the complete graph
 $K_{\frac{n}{\alpha(G)}}.$
\end{theorem}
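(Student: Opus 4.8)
The plan is to derive (\ref{zhang2004b-1}) from two elementary facts, using the spectrum only through the lower bound $\lambda_1(G)\ge\Delta+1$ of Theorem~\ref{grone1994-th}, where $\Delta$ is the maximum degree of $G$. First I record the domination inequality $n\le\alpha(G)(\Delta+1)$: if $I$ is a maximum independent set, then by maximality the closed neighbourhoods $\{u\}\cup N(u)$, $u\in I$, cover $V(G)$, and each has at most $\Delta+1$ vertices, so $n\le|I|(\Delta+1)=\alpha(G)(\Delta+1)$; equivalently $\Delta+1\ge n/\alpha(G)$. Combined with Theorem~\ref{grone1994-th} this gives $\lambda_1(G)\ge\Delta+1\ge n/\alpha(G)$, which is (\ref{zhang2004b-1}). (Should one wish to avoid Theorem~\ref{grone1994-th}, the inequality also follows by substituting the test vector $x$ with $x_u=n-\alpha$ for $u\in I$ and $x_v=-\alpha$ otherwise into the Rayleigh quotient: since $I$ is independent and $x^TL(G)x=\sum_{(i,j)\in E}(x_i-x_j)^2$, only the at least $n-\alpha$ edges joining $I$ to $V\setminus I$ contribute, and the quotient works out to at least $n/\alpha(G)$.)

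For the equality statement, the reverse direction is immediate: if $G$ consists of $\alpha$ disjoint copies of $K_{n/\alpha}$ then $\alpha(G)=\alpha$ and $\lambda_1(G)=\lambda_1(K_{n/\alpha})=n/\alpha$. Conversely suppose $\lambda_1(G)=n/\alpha(G)=:t$. Then both inequalities in the chain $\lambda_1(G)\ge\Delta+1\ge n/\alpha(G)$ are equalities, so $t=\Delta+1$ (an integer, whence $\alpha(G)\mid n$) and $n=\alpha(G)(\Delta+1)$. This last equality forces equality throughout the covering estimate $n\le|\bigcup_{u\in I}(\{u\}\cup N(u))|\le\sum_{u\in I}(1+d(u))\le\alpha(G)(\Delta+1)$; hence the closed neighbourhoods $C_u:=\{u\}\cup N(u)$, $u\in I$, are pairwise disjoint and partition $V(G)$, and every $u\in I$ has degree exactly $\Delta=t-1$. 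Consequently each block satisfies $|C_u|=t$, and every vertex outside $I$ has exactly one neighbour in $I$, namely the centre of its block.

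It remains to promote this block structure to a disjoint union of cliques, which is where the real work sits. Write $W_u=N(u)$ for the non-central vertices of $C_u$. If some block contained a non-adjacent pair $w_1,w_2\in W_u$, then, as $u$ is the only neighbour of each $w_i$ in $I$, the set $(I\setminus\{u\})\cup\{w_1,w_2\}$ would be independent of size $\alpha(G)+1$, contradicting maximality; thus each $W_u$ induces a clique and each $C_u\cong K_t$. Finally, a cross edge $w\sim w'$ with $w\in W_u$, $w'\in W_{u'}$, $u\ne u'$, would give $w$ the neighbours $u$, the $t-2$ other vertices of the clique $W_u$, and $w'$, so $d(w)\ge t>\Delta$, which is impossible. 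Hence the blocks are exactly the connected components, each isomorphic to $K_t$, and $G$ is the disjoint union of $n/t=\alpha(G)$ copies of $K_{n/\alpha(G)}$. The main obstacle is this final extraction: the eigenvalue hypothesis only fixes the two numerical identities $\Delta+1=t$ and the exact covering of $V(G)$, after which the clique structure must be recovered combinatorially, with non-edges ruled out by the maximality of the independence number and inter-block edges ruled out by the degree cap $\Delta\le t-1$.
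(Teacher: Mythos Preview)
The survey does not supply a proof of this theorem, only the citation to \cite{zhang2004b}, so there is nothing in the paper itself to compare against. Your argument is correct and self-contained.

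The one point to tighten is that Theorem~\ref{grone1994-th}, as stated in the survey, assumes $G$ is \emph{connected}, whereas the present statement allows $G$ to be disconnected (and indeed the extremal graphs are disconnected whenever $\alpha(G)>1$). The extension is immediate---apply Theorem~\ref{grone1994-th} to the component containing a vertex of maximum degree---but it should be said explicitly, since you rely on the chain $\lambda_1(G)\ge\Delta+1\ge n/\alpha(G)$ both for the bound and for the equality analysis. Your parenthetical Rayleigh-quotient argument avoids this issue for the inequality, but the equality case still needs $\lambda_1(G)\ge\Delta+1$ to obtain the degree cap that rules out cross-edges.

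The combinatorial extraction of the structure in the equality case is clean and complete: from $n=\alpha(G)(\Delta+1)$ the closed neighbourhoods $C_u$, $u\in I$, must partition $V(G)$ with $|C_u|=\Delta+1$; the maximality of $I$ then forces each $W_u=N(u)$ to be a clique (else $(I\setminus\{u\})\cup\{w_1,w_2\}$ beats $\alpha(G)$), and the degree cap $\Delta=t-1$ forbids any edge between distinct blocks. Nothing is missing here.
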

 In 2005, Lu et al. \cite{lu2005} also obtained the same result for connected graphs.
  Recently, Nikiforov in \cite{Nikiforov2007} gave a slight
 improvement and showed that
 $\lambda_1(G)\ge\left\lceil\frac{n}{\alpha(G)}\right\rceil$, where
  $\lceil x\rceil$ the smallest integer no less than
 $x$.
Let  $K_{1,m}$ denote the star  on $m+1$
  vertices. If  $\frac{n-1}{2}<m\le n-1$, then $T_{n,m}$ is the tree created
  from $K_{1,m}$ by adding a pendent edge to $n-m-1$ of the
  pendent vertices of $K_{1, m}$.

\begin{theorem}(\cite{zhang2004b})
\label{zhang2004b-th3}
 Let $T$ be  a tree of order  $ n$  and the
independence number $\alpha(T)$. Denote by  $a$ the largest root
of the equation $x^3-(\alpha(T)+4)x^2+(3\alpha(T)+4)x-n=0$. Then
\begin{equation}\label{zhang2004b-3}
\lambda_1(T)\le a
\end{equation}
with  equality if and only if $T$ is $T_{n, \alpha(T)}$.
\end{theorem}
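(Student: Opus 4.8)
The plan is to split the statement into two tasks: first to evaluate $\lambda_1(T_{n,\alpha})$ exactly and match it with the cubic, and then to prove that $T_{n,\alpha}$ is the unique maximiser of $\lambda_1$ among all trees of order $n$ with independence number $\alpha$. Throughout I would exploit that a tree is bipartite, so by K\"onig's theorem $\alpha(T)=n-\mu(T)$, where $\mu(T)$ is the matching number; hence fixing $n$ and $\alpha$ is the same as fixing $n$ and $\mu=n-\alpha$, and I shall move freely between the two invariants. I would also use that the neighbourhood of any vertex is independent (no triangles), so $\Delta(T)\le\alpha$, and that by Theorem~\ref{grone1994-th}, $\lambda_1(T)\ge\Delta(T)+1$; these facts pin the relevant eigenvalue into a range where the cubic is monotone, which is what ultimately lets an inequality on $\lambda_1$ be converted into the comparison with the largest root $a$.

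For the first task I would compute $\lambda_1(T_{n,\alpha})$ from an equitable partition. Writing $m=\alpha$, the vertices of $T_{n,m}$ split into the centre, the $n-m-1$ degree-$2$ vertices carrying a pendant path, the $n-m-1$ outer leaves of those paths, and the $2m-n+1$ bare pendant neighbours of the centre. This partition is equitable, and the Laplacian quotient matrix is
\[
B=\left(\begin{array}{rrrr} m & -(n-m-1) & 0 & -(2m-n+1)\\ -1 & 2 & -1 & 0\\ 0 & -1 & 1 & 0\\ -1 & 0 & 0 & 1\end{array}\right).
\]
Since $B$ has zero row sums, $0$ is an eigenvalue, and a direct expansion of $\det(xI-B)$ factors as $x\bigl(x^3-(m+4)x^2+(3m+4)x-n\bigr)$, so the nonzero eigenvalues of $B$ are exactly the roots of the cubic in the statement. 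Because the spectrum of $B$ lies in that of $L(T_{n,m})$, it remains to check that the largest root $a$ is the spectral radius: the remaining eigenvalues come from the antisymmetric "difference" modes on the identical branches, which solve small local problems (giving the values $1$ and $\frac{3\pm\sqrt5}{2}$), all strictly below $a\ge m+1$. Hence $\lambda_1(T_{n,\alpha})=a$.

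For the second and main task I would argue by extremality. Let $T^{*}$ maximise $\lambda_1$ over all trees of order $n$ with matching number $\mu=n-\alpha$, and let $x$ be a unit eigenvector for $\lambda_1(T^{*})$. The engine is the standard edge-shifting lemma: if $u$ and $v$ satisfy $|x_u|\ge|x_v|$ and one detaches a set of pendant neighbours of $v$ and reattaches them to $u$, then $\lambda_1$ does not decrease, and it increases strictly unless the two local configurations already coincide. Using this, together with a grafting lemma controlling how $\lambda_1$ changes when a pendant path at a vertex is shortened or lengthened, I would push all the structure of $T^{*}$ towards a single vertex of maximum eigenvector value and show that after finitely many moves the tree becomes a centre carrying $\mu-1$ pendant paths of length two with the remaining $n-2\mu+1$ neighbours joined by pendant edges, that is, $T_{n,\alpha}$. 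Combined with the first task this yields $\lambda_1(T)\le\lambda_1(T_{n,\alpha})=a$, with equality only for $T=T_{n,\alpha}$.

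The delicate point, and the step I expect to be the main obstacle, is that each transformation must preserve the matching number $\mu$ (equivalently $\alpha$), and generic edge shifts do not: moving a pendant edge can create or destroy an augmenting path and thereby change $\mu$. So the moves must be chosen and ordered to increase $\lambda_1$ while keeping a maximum matching of the same size, which forces a case analysis according to whether the vertices involved are matched or exposed by a fixed maximum matching, and according to the local structure (pendant edge versus pendant path of length two) at the target vertex. Showing that precisely the configuration of $T_{n,\alpha}$ is left invariant under all $\mu$-preserving strictly increasing moves is what delivers the uniqueness in the equality case.
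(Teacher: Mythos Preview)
The survey paper does not supply its own proof of this theorem; it is quoted from \cite{zhang2004b}, and the immediately following Theorem~\ref{guo2003-th} (the matching-number version, equivalent via K\"onig as you correctly observe) is likewise quoted from \cite{guo2003} without proof. So there is no in-paper argument to compare against, only the original sources.

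That said, your plan matches what is done in those sources. Your equitable-partition computation is correct: with the four cells you describe one indeed gets
\[
\det(xI-B)=x\bigl(x^{3}-(m+4)x^{2}+(3m+4)x-n\bigr),
\]
and your check that the remaining spectrum of $L(T_{n,m})$ consists of the values $1$ and $(3\pm\sqrt{5})/2$, all below $a\ge m+1$, is the right way to finish the identification $\lambda_1(T_{n,\alpha})=a$. (Two small caveats: the four-cell partition degenerates when $m=n-1$, i.e.\ the star, and when $n-m-1=1$; these boundary cases should be handled separately but are trivial.)

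For the extremal step, your proposal to run grafting moves on a putative maximiser while preserving the matching number is exactly the strategy of \cite{guo2003}: Guo's key tool is a lemma asserting that shifting a pendant subtree from a vertex of smaller eigenvector weight to one of larger weight does not decrease $\lambda_1$, and the core of his proof is precisely the case analysis you anticipate---choosing and ordering the moves so that a fixed maximum matching survives each step. So your proposal is not merely plausible but essentially a reconstruction of the published argument; the ``main obstacle'' you single out is the genuine technical content, and carrying it through carefully (matched versus exposed endpoints, pendant edges versus pendant $P_2$'s) is what remains to be written.
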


 A {\it matching} in a simple graph $G$ is a set of edges with no shared common vertex
The {\it matching number } of $G$ is the maximum size  among all
matching in $G$.
 Guo in \cite{guo2003} showed that the largest Laplacian
 eigenvalue of  a tree in terms of the matching number.
 \begin{theorem}
 (\cite{guo2003})\label{guo2003-th}
 Let $T$ be a tree of order $n$ with the matching number
 $\beta(T)$.
Denote by  $a$ the largest root of the equation
$x^3-(n-\beta(T)+4)x^2+(3n-3\beta(T)+4)x-n=0$.
  Then
\begin{equation}\label{guo2003-1}
\lambda_1(T)\le a
\end{equation}
with  equality if and only if $T$ is $T_{n, n-\beta(T)}$.
\end{theorem}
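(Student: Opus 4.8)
The plan is to recognize that, for trees, this statement is literally Theorem~\ref{zhang2004b-th3} rewritten in different variables, so it can be deduced from the independence-number bound already proved. The engine is the classical matching/independent-set duality in bipartite graphs. Since a tree $T$ of order $n$ is bipartite, K\"onig's theorem identifies the matching number $\beta(T)$ with the minimum vertex-cover number $\tau(T)$, and Gallai's identity gives $\alpha(T)+\tau(T)=n$. Combining the two yields the key relation $\alpha(T)=n-\beta(T)$, valid for every tree.

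First I would substitute $\alpha(T)=n-\beta(T)$ into the cubic of Theorem~\ref{zhang2004b-th3}, namely $x^3-(\alpha(T)+4)x^2+(3\alpha(T)+4)x-n$. Replacing $\alpha(T)$ by $n-\beta(T)$ turns it into
\[
x^3-(n-\beta(T)+4)x^2+(3n-3\beta(T)+4)x-n,
\]
which is exactly the cubic in the present statement. Hence its largest root $a$ equals the bound of Theorem~\ref{zhang2004b-th3}, and $\lambda_1(T)\le a$ is immediate. The same substitution identifies the two extremal trees, $T_{n,\alpha(T)}=T_{n,\,n-\beta(T)}$, so equality holds precisely when $T=T_{n,\,n-\beta(T)}$. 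One should also check that $m=n-\beta(T)$ lies in the admissible range $\frac{n-1}{2}<m\le n-1$ needed for $T_{n,m}$ to be defined; this follows from $1\le\beta(T)\le\lfloor n/2\rfloor$, since then $n-\beta(T)\ge\lceil n/2\rceil>\frac{n-1}{2}$. In effect, the two cubics coincide because the matching and independence versions of the bound are the same theorem for trees.

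If instead Theorem~\ref{zhang2004b-th3} were unavailable, I would argue directly in two stages. The first is an extremal step: among all trees of order $n$ with matching number $\beta$, show $\lambda_1$ is maximized by $T_{n,\,n-\beta}$, by taking an extremal tree and applying local grafting/edge-shifting moves that preserve both $n$ and $\beta$ while not decreasing $\lambda_1$, driving the tree to canonical form. The second is a direct computation of $\lambda_1(T_{n,\,n-\beta})$: this tree has a large automorphism group, so its Laplacian admits an equitable partition into four cells (the center, its degree-two neighbours, the pendant vertices attached to those, and its pendant neighbours); the associated $4\times 4$ quotient has $0$ as an eigenvalue, and its remaining cubic factor is exactly the polynomial above, whose largest root is $\lambda_1$.

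The only genuine obstacle lies in the direct approach, namely the transformation lemma: verifying that the chosen local moves do not decrease $\lambda_1$ requires a Rayleigh-quotient comparison (conveniently via the Perron vector of $K(G)=D(G)+A(G)$, using inequality~(\ref{graph-line-eig}) of Lemma~\ref{graph-line}) together with care over the induction's boundary cases. The reduction through K\"onig's theorem avoids all of this, which is why I would present it as the main proof.
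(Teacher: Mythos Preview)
The survey does not supply its own proof of this theorem; it is simply quoted from \cite{guo2003}. Your reduction to Theorem~\ref{zhang2004b-th3} via K\"onig--Gallai is correct: since a tree is bipartite, $\beta(T)=\tau(T)$ and hence $\alpha(T)=n-\beta(T)$, so the two cubics are literally the same polynomial and the extremal trees $T_{n,\alpha(T)}$ and $T_{n,\,n-\beta(T)}$ coincide. The range check you include is also fine.

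One point worth flagging is the direction of dependence. Chronologically \cite{guo2003} precedes \cite{zhang2004b}, so in the literature it is Theorem~\ref{zhang2004b-th3} that is the restatement of Theorem~\ref{guo2003-th}, not the other way round. Presenting the matching version as a corollary of the independence version is therefore logically valid but historically inverted; if one wanted a self-contained argument, your ``direct'' outline (grafting moves preserving $n$ and $\beta$ that push the tree toward $T_{n,\,n-\beta}$, followed by computing $\lambda_1(T_{n,\,n-\beta})$ through its equitable partition) is essentially Guo's original proof, and that is where the real work lies.
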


Let $G$ be a simple graph and let $H$ be any bipartite subgraph of
$G$ with the maximum edges. Thus
$$b(G)=\frac{|E(H)|}{|E(G)|}$$
is called {\it the bipartite density} of $G$.  Berman and Zhang in
\cite{berman2003} gave a lower bound  for the largest Laplacian
eigenvalue of cubic graphs in terms of their bipartite density.
Stevanovi\'{c} in \cite{Stevanovic2004} characterized all exemtral
graphs which attain the lower bound.

\begin{theorem}(\cite{berman2003},
\cite{Stevanovic2004})\label{berman2003-th} Let $G$ be a connected
cubic graph of order $n$ with the bipartite density $b(G)$. Then
\begin{equation}\label{berman2003-1}
\lambda_1(G)\ge \frac{10b(G)-4}{b(G)}
\end{equation}
with equality if and only if $G$ is bipartite graph, or the
complete graph $K_4$, or the Petersen graph, or the four special
graphs  of order 10.
\end{theorem}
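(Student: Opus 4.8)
The plan is to exploit that $G$ is cubic, so that $L(G)=3I-A(G)$ and the unoriented (signless) Laplacian of the paper is $K(G)=3I+A(G)$; since $L(G)+K(G)=6I$, we get $\lambda_1(G)=6-\lambda_n(K(G))$, where $\lambda_n(K(G))$ denotes the smallest eigenvalue of $K(G)$. Write $m=\frac{3}{2}n$ for the number of edges, $c=b(G)m$ for the size of a maximum bipartite subgraph (that is, a maximum cut), and $k=m-c$ for the number of \emph{frustration} edges left outside it. An elementary manipulation then shows that the claimed inequality $\lambda_1(G)\ge 10-4/b(G)$ is equivalent to $\lambda_n(K(G))\le 4(1-b)/b=4k/c$. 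So the whole problem reduces to an upper bound on the smallest signless-Laplacian eigenvalue in terms of the number of frustration edges, which one checks against the three clean equality cases: $b=1$ (bipartite, $\lambda_n(K)=0$), Petersen ($b=4/5$, $\lambda_n(K)=1$), and $K_4$ ($b=2/3$, $\lambda_n(K)=2$).

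First I would record the Rayleigh form $\lambda_n(K(G))=\min_{x\ne 0}\frac{\sum_{(i,j)\in E}(x_i+x_j)^2}{\sum_v x_v^2}$ together with the structural fact that, for a maximum cut $(V_1,V_2)$, every vertex has at most one neighbour on its own side (otherwise switching it enlarges the cut). Hence the $k$ frustration edges form a matching $M$, and in particular $k\le n/2$, which already forces $b\ge 2/3$ and $c\ge n$. The naive test vector $x=\pm1$ (signs by side) gives only $\lambda_n(K(G))\le 4k/n$, i.e. the weaker bound $\lambda_1\ge 6b$, so the matching structure must genuinely be used. I would therefore take the weighted test vector equal to $\pm 1$ on the vertices unmatched by $M$ and to $\pm s$ on the $2k$ matched vertices (signs according to side), and optimise over $s$. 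Its Rayleigh quotient is $\frac{4ks^2+p(1-s)^2}{n-2k(1-s^2)}$, where $p$ is the number of cut edges with exactly one matched endpoint. Minimising in $s$ and using $c\ge 2k$, a short computation (bounding $p\le 4k$) reduces the desired inequality to $(c-2k)(c-4k)\ge 0$; since $c\ge 4k$ holds exactly when $b\ge 4/5$, this closes the argument cleanly in that range, with the Petersen graph sitting precisely on the boundary $c=4k$.

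The main obstacle is the complementary range $2/3\le b<4/5$, where $c<4k$ and the crude estimate $p\le 4k$ is too lossy. Here one must instead gain from the fact that when frustration edges are numerous their matched endpoints are forced to share cut edges: one needs a lower bound on the number $q=2k-\frac{1}{2}p$ of cut edges joining two matched vertices, and to feed it back into the $s$-optimisation. Establishing this clustering estimate — or, alternatively, finding a global argument that avoids the parameter $s$ altogether — is the crux of the Berman--Zhang bound, and is where I expect the real work to lie.

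Finally, for the equality discussion I would trace back every inequality used above. Equality forces the optimal weighted vector to be a genuine eigenvector of $K(G)$ for $\lambda_n(K(G))$, which rigidifies the degrees along $M$ and the neighbourhood pattern across the cut; this quickly yields the regular bipartite case, $K_4$, and Petersen. Pinning down the remaining four exceptional cubic graphs of order $10$ is the delicate combinatorial endpoint contributed by Stevanovi\'{c}, and I would expect to settle it by a finite case analysis of the possible frustration matchings on a cubic graph attaining $\lambda_n(K(G))=4k/c$, checking directly which admit the forced eigenvector.
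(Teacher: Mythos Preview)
The paper under review is a survey and contains \emph{no proof} of this theorem; it merely states the result and cites Berman--Zhang for the inequality and Stevanovi\'{c} for the completion of the equality classification. There is therefore nothing in the paper to compare your argument against.

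Evaluating your proposal on its own merits: the reduction $\lambda_1(G)=6-\lambda_n(K(G))$ for cubic $G$, the Rayleigh form for $K(G)$, the observation that the frustration edges of a maximum cut form a matching, and the computation showing that the $\pm 1,\pm s$ test vector with the crude bound $p\le 4k$ yields exactly $(c-2k)(c-4k)\ge 0$ are all correct. So your argument genuinely closes the range $b\ge 4/5$.

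The difficulty is that for $2/3<b<4/5$ you do not have a proof, and you say so yourself. The ``clustering estimate'' you propose (a lower bound on $q$, the number of cut edges joining two matched vertices) is not available in general: nothing forces matched vertices to be adjacent across the cut, and one can construct cubic graphs with a maximum cut in which $q=0$ while $b$ is strictly below $4/5$. So the specific mechanism you suggest for closing the gap does not work as stated; the one-parameter family of test vectors $\pm 1,\pm s$ is simply not rich enough in this range, and a different idea is needed. Since the survey gives no hint of the original argument, I can only say that what you have written is a sound partial result together with an honest identification of the missing step, not a proof of the full theorem. The equality discussion is likewise only a sketch: tracing equality through a variational bound that is itself not tight in part of the range cannot by itself produce the classification, and the ``finite case analysis'' for the four sporadic graphs of order~$10$ would need to be carried out explicitly.
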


\section{The second largest Laplacian eigenvalue}
Since there are a lot of upper and lower bounds for the largest
Laplacian eigenvalues of graphs, On upper and lower bounds for the
second largest Laplacian eigenvalue of graphs, what can we say? Up
to now, there are just a few results on it.
 Firstly, Zhang and Li in \cite{zhang1998} investigated the second
 largest Laplacian eigenvalue of a tree.   They obtained the upper
 bound in terms of the number of vertices and characterized all
 extremal graphs which attained the upper bound.
 \begin{theorem}(\cite{zhang1998})\label{zhang1998-th}
 Let $T$ be a tree of order $n$. Denote by $\lceil x\rceil$ the smallest integer no less than
 $x$. Then
 \begin{equation}\label{zhang1998-1}
 \lambda_2(T)\le\left \lceil\ \frac{n}{2}\ \right\rceil
 \end{equation}
 with equality if and only if $n$ is even and $T$ is obtained  joining one edge from any one
 vertex  to another vertex between the two copies star graphs $K_{1,\frac{n}{2}-1}$.
\end{theorem}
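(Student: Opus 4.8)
The plan is to prove the bound by splitting on the \emph{centroid} of $T$, using a different deletion in each case, and then to settle the equality case by a rank-one perturbation analysis of the central edge. Recall (Jordan's theorem) that every tree has either one centroid vertex or two adjacent centroid vertices: in the bicentroid case $n$ is even and the centroid edge divides $T$ into two components of size exactly $n/2$, while in the unicentroid case every component of $T-c$ has size at most $\lfloor (n-1)/2\rfloor$. First I would handle the bicentroid case: let $e=uv$ be the centroid edge and $T-e=T_u\sqcup T_v$ with $|T_u|=|T_v|=n/2$. Writing $L(T)=L(T-e)+(e_u-e_v)(e_u-e_v)^T$ exhibits $L(T)$ as a rank-one positive semidefinite update, so by interlacing $\lambda_2(T)\le\lambda_1(T-e)=\max\{\lambda_1(T_u),\lambda_1(T_v)\}$; since $\lambda_1(H)\le |V(H)|$ by (\ref{graph-com-sec-1}), both terms are at most $n/2=\lceil n/2\rceil$.

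For the unicentroid case with centroid $c$, I would instead use the principal submatrix $L_c$ of $L(T)$ obtained by deleting the row and column of $c$. Cauchy interlacing gives $\lambda_2(T)\le\lambda_1(L_c)$, and because $T$ is a tree, $L_c=L(T-c)+P$ with $P$ the $0$--$1$ diagonal matrix marking the neighbours of $c$; this is block diagonal with one block $M_i=L(T_i)+e_{w_i}e_{w_i}^T$ per component $T_i$ of $T-c$ (where $w_i$ is the unique vertex of $T_i$ joined to $c$). Hence $\lambda_1(L_c)=\max_i\lambda_1(M_i)$, and Weyl's inequality together with (\ref{graph-com-sec-1}) yields $\lambda_1(M_i)\le\lambda_1(T_i)+1\le |V(T_i)|+1\le\lfloor(n-1)/2\rfloor+1=\lceil n/2\rceil$. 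This completes the bound in all cases.

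The equality analysis is where the work lies. I would first rule out the unicentroid case: there $\lambda_1(T_i)=|V(T_i)|$ would force $T_i$ to be a star (by (\ref{graph-com-2}), since $T_i^{c}$ would then be disconnected, which for an acyclic graph means a star), and even then the Weyl step $\lambda_1(M_i)<\lambda_1(T_i)+1$ is \emph{strict} because the unit vector $e_{w_i}$ is never an eigenvector of a star's Laplacian; so every inequality is strict and $\lambda_2(T)<\lceil n/2\rceil$. In particular, equality forces $n$ even and $T$ bicentroid, and then $\max\{\lambda_1(T_u),\lambda_1(T_v)\}=n/2$ forces at least one half, say $T_u$, to be the star $K_{1,n/2-1}$.

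The remaining step, and the hardest part, is to show that the \emph{other} half must also be a star and that the joining edge is arbitrary. For this I would track the value $k:=n/2$ under the rank-one update $L(T)=L(T-e)+(e_u-e_v)(e_u-e_v)^T$. The Perron-type top eigenvector $\psi$ of a star's Laplacian has no zero entry, so $\langle e_u-e_v,\psi\rangle=\psi_u\ne0$; if $T_v$ were not a star then $k$ is a \emph{simple} eigenvalue of $L(T-e)$ whose eigenvector is not annihilated by the perturbation, and the secular equation of the rank-one update then places $\lambda_2(T)$ strictly below $k$, a contradiction. Conversely, when both halves are stars, $k$ is a \emph{double} eigenvalue of $L(T-e)$, the combination $\psi^{v}_{v}\psi^{u}+\psi^{u}_{u}\psi^{v}$ is orthogonal to $e_u-e_v$ and hence survives as a $k$-eigenvector of $L(T)$ while the complementary direction is pushed strictly above $k$; this gives $\lambda_2(T)=n/2$ for \emph{every} choice of connecting vertices, matching the stated family. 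The obstacle throughout this last step is the careful rank-one/eigenvalue-persistence bookkeeping, anchored on the fact that a star's largest Laplacian eigenvector is nowhere zero.
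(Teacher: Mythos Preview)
The survey paper does not actually supply a proof of this theorem; it simply quotes the statement and cites the original source \cite{zhang1998}. So there is no ``paper's own proof'' to compare your argument against.

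That said, your argument is essentially correct and self-contained. The centroid split, the rank-one interlacing in the bicentroid case, the Cauchy interlacing plus block decomposition $L_c=\bigoplus_i(L(T_i)+e_{w_i}e_{w_i}^T)$ in the unicentroid case, and the use of $\lambda_1(H)\le |V(H)|$ (with equality only for stars) are all sound. In the unicentroid equality discussion you should state explicitly that if $T_i$ is \emph{not} a star then already $\lambda_1(T_i)<|V(T_i)|$ gives strict inequality, so that the Weyl-sharpness issue only arises when $T_i$ is a star; you handle the star case correctly, since the top eigenvector $(m-1,-1,\dots,-1)$ of $L(K_{1,m-1})$ is nowhere zero and hence never proportional to a standard basis vector for $m\ge 2$.

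Your converse can be shortened considerably: once both halves $T_u,T_v$ are stars of order $n/2$, the forest $T-e$ has $\lambda_1(T-e)=\lambda_2(T-e)=n/2$, and the rank-one interlacing $\lambda_2(T-e)\le\lambda_2(T)\le\lambda_1(T-e)$ pins $\lambda_2(T)=n/2$ immediately, for \emph{any} choice of the attaching vertices $u,v$. The explicit eigenvector construction and the secular-equation remark are correct but unnecessary for this direction.
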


Using the relations between graph partition and the Laplacian
eigenvalue and Cauch-Poincare separation theorem, Li and Pan in
\cite{li2000} showed the the second largest Laplacian eigenvalue
of a graph is at least its second largest degree.

\begin{theorem}(\cite{li2000})\label{li2000-th}
Let $G$ be a simple connected graph with $n\ge 3$ vertices. Denote
by $d_2$ the second largest degree of $G$. Then
\begin{equation}\label{li2000-11}
\lambda_2(G)\ge d_2
\end{equation}
with equality if $G$ is a complete bipartite graph.
 \end{theorem}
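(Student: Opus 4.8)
The plan is to bound $\lambda_2(G)$ from below by eigenvalue interlacing (the Cauchy--Poincar\'e separation theorem): if $R$ is an $n\times 2$ real matrix with $R^TR=I_2$ and $B=R^TL(G)R$ has eigenvalues $\mu_1\ge\mu_2$, then $\lambda_2(G)\ge\mu_2$. This is the case $k=2$ of the Courant--Fischer theorem (\ref{courant-fischer-th}) applied to the subspace spanned by the columns of $R$; equivalently, using $x^TL(G)x=\sum_{(u,v)\in E(G)}(x_u-x_v)^2$, it is enough to produce a two--dimensional subspace of $\mathbf{R}^n$ on which the Rayleigh quotient is everywhere at least $d_2$. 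Fix vertices $v_1,v_2$ with $d(v_1)=d_1\ge d(v_2)=d_2$ and put $W=\{v\in V(G):d(v)\ge d_2\}$, so that $v_1,v_2\in W$ and $|W|\ge2$.

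Two sub--cases are immediate. If $W$ is \emph{not} a clique, pick non--adjacent $a,b\in W$ and take $R=[\,e_a\mid e_b\,]$; since $a\not\sim b$, the compression is the diagonal matrix with entries $d(a),d(b)\ge d_2$, so $\mu_2=\min\{d(a),d(b)\}\ge d_2$ and we are done. If instead $W$ is a clique with $|W|\ge3$, I would use the principal submatrix $L(G)[W]=D_W+I-J$, where $D_W$ is the diagonal matrix of the degrees $d(w)$ ($w\in W$) and $J$ is the all--ones matrix. The negative semidefinite rank--one perturbation $-J$ makes the eigenvalues of $L(G)[W]$ interlace the numbers $d(w)+1$, so its second largest eigenvalue is at least the third largest of these numbers, which is $\ge d_2+1$ because every $w\in W$ has $d(w)\ge d_2$. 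Interlacing with the full matrix then gives $\lambda_2(G)\ge d_2+1>d_2$.

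The one remaining configuration, which I expect to be the main obstacle, is $W=\{v_1,v_2\}$ with $v_1\sim v_2$ and every other vertex of degree $<d_2$. Here both previous devices break down: the coordinate compression on $\{v_1,v_2\}$ has smaller eigenvalue $\frac{d_1+d_2}{2}-\sqrt{(\frac{d_1-d_2}{2})^2+1}<d_2$, and one checks that if $v_1,v_2$ share $c$ common neighbours then two vertex--disjoint stars centred at $v_1,v_2$ (the two--centre analogue of the star test vector behind $\lambda_1(G)\ge\Delta+1$ in Theorem~\ref{grone1994-th}) can each be made to carry $d_2-1$ leaves only when $c\le d_1-d_2$; so the honestly hard situation is $v_1\sim v_2$ with many common neighbours (the complete graphs $K_m$ and the ``book'' graphs $K_2\vee\overline{K_c}$ are typical, both satisfying the bound, the latter strictly). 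The remedy is to interlace against a principal submatrix on $\{v_1,v_2\}$ together with their common neighbourhood and to show that its second eigenvalue stays at least $d_2$; verifying this for every admissible degree pattern is the delicate heart of the argument, and is where the partition of the neighbourhoods of $v_1,v_2$ enters.

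Finally, the equality assertion needs only one spectrum. For $K_{a,b}$ with $a\le b$, either directly or by applying Lemma~\ref{graph-com-spe} to its complement $K_a\cup K_b$, the Laplacian eigenvalues are $a+b$ (once), $b$ with multiplicity $a-1$, $a$ with multiplicity $b-1$, and $0$. The degrees are $b$ (for the $a$ vertices of the small side) and $a$ (for the $b$ vertices of the large side), so $d_2=b$ when $a\ge2$ and $d_2=1$ when $a=1$; in either case $\lambda_2(K_{a,b})=d_2$, and equality holds.
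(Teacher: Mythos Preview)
Your treatment of the first two cases and of the equality assertion is correct. When $W=\{v:d(v)\ge d_2\}$ contains two non--adjacent vertices, the $2\times 2$ diagonal principal submatrix immediately gives $\lambda_2(G)\ge d_2$ by Cauchy interlacing; when $W$ is a clique of size $m\ge 3$, your rank--one perturbation argument on $L(G)[W]=(D_W+I)-J$ correctly yields $\lambda_2(L(G)[W])\ge\lambda_3(D_W+I)\ge d_2+1$, and interlacing with the full matrix finishes it. The check that $\lambda_2(K_{a,b})=d_2$ is also fine.

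The gap is precisely the configuration you flag but do not resolve: $W=\{v_1,v_2\}$ with $v_1\sim v_2$ and every other vertex of degree at most $d_2-1$. You propose two remedies---vertex--disjoint stars when $c=|N(v_1)\cap N(v_2)|\le d_1-d_2$, and interlacing against $L(G)[\{v_1,v_2\}\cup C]$ otherwise---but only the first is essentially complete (it works because $L(G)\succeq L(H)$ for any spanning edge--subgraph $H$, and two disjoint copies of $K_{1,d_2-1}$ padded with isolated vertices have $\lambda_2=d_2$). For the second you give no estimate at all, and it is not obvious that the second eigenvalue of that principal submatrix is always at least $d_2$: its diagonal entries $d(w)$ for $w\in C$ may be as small as $2$, the induced structure on $C$ is arbitrary, and a bare appeal to interlacing does not control this. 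Since this sub--case genuinely occurs (e.g.\ $K_2\vee\overline{K_c}$ with $c\ge 2$, or the triangle with two pendant edges), the proof as written is incomplete; you have described where the difficulty lies rather than overcome it.

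The survey itself does not supply a proof of this theorem; it only records that Li and Pan obtain it ``using the relations between graph partition and the Laplacian eigenvalue and Cauchy--Poincar\'e separation theorem.'' So the intended argument likewise rests on interlacing, but via a compression coming from a partition of $V(G)$ (adapted to the neighbourhoods of $v_1$ and $v_2$) rather than from a raw principal submatrix; that partition device is what handles the adjacent case uniformly and is exactly the missing ingredient in your outline.
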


Das in \cite{das2004b} studied the Laplacian eigenvalues of
induced subgraph of a graph obtained from the vertices of two
vertices with the largest two degrees and their neighbors. Basing
these properties and Cauch-Poincare separation theorem,  He
improved Li and Pan's lower bound.
\begin{theorem}(\cite{das2004b})\label{das2004-th2}
Let $G$ be a simple connected graph with at least three vertices.
Denote by $d_1=d(u)$ and $d_2=d(v)$ the largest and second largest
degree of $G$, respectively, and $c_{uv}=|N(u)\cap N(v)|$. Then
\begin{equation}\label{das2004b-2}
\lambda_2(G)\ge \left\{\begin{array}{ll}
\frac{d_2+2+\sqrt{(d_2-2)^2+4c_{uv}}}{2}, &\,  {\rm if }\ (u,v)\in E\\
\frac{d_2+1+\sqrt{(d_2+1)^2-4c_{uv}}}{2}, &\, {\rm if }\
(u,v)\notin E.
\end{array}\right.
\end{equation}
\end{theorem}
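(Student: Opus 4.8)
The plan is to produce a principal submatrix of $L(G)$ whose second largest eigenvalue is exactly the claimed root, and then quote the Cauchy--Poincar\'e separation theorem, which gives $\lambda_2(L(G))\ge\lambda_2(B)$ for any principal submatrix $B$ of order at least two. Let $u,v$ realise the two largest degrees $d_1=d(u)\ge d_2=d(v)$, let $w_1,\ldots,w_c$ be their common neighbours ($c=c_{uv}$), and note $c\le d_2-1$ when $(u,v)\in E$ and $c\le d_2$ otherwise, since $u$ is (respectively is not) itself a neighbour of $v$. I would index $B$ by $u$, $v$, the common neighbours, and equal numbers of private neighbours of $u$ and of $v$ --- namely $d_2-1-c$ of each in the adjacent case and $d_2-c$ of each in the non-adjacent case --- so that inside this vertex set both $u$ and $v$ look like centres of a star on $d_2$ edges overlapping in the $w_i$.

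The second step is to pass from $B$ to a clean comparison matrix $B'$ that is literally this pair of overlapping stars. I obtain $B'$ from $B$ by lowering the diagonal entry at $u$ from $d_1$ to $d_2$, lowering each common-neighbour diagonal to $2$ and each private-neighbour diagonal to $1$, and deleting every edge lying inside $N(u)\cup N(v)$. Lowering a diagonal subtracts a nonnegative diagonal matrix, and deleting an internal edge subtracts an edge Laplacian; both are positive semidefinite, so $B-B'$ is positive semidefinite and Weyl's monotonicity theorem gives $\lambda_2(B)\ge\lambda_2(B')$. It therefore suffices to show that $\lambda_2(B')$ equals the displayed root.

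The third step is to diagonalise $B'$ using the involution that interchanges $u\leftrightarrow v$ and the private leaves of $u$ with those of $v$. Vectors antisymmetric within a single leaf-class are eigenvectors for the eigenvalues $1$ (private leaves) and $2$ (common leaves); the rest of the spectrum comes from a symmetric and an antisymmetric block obtained by averaging over the classes. A direct computation (which I would not grind through here) shows that in the adjacent case the symmetric block has characteristic polynomial $-\lambda(\lambda^2-(d_2+2)\lambda+(2d_2-c))$, whose larger nonzero root $\frac{1}{2}(d_2+2+\sqrt{(d_2-2)^2+4c})$ is the asserted bound, while in the non-adjacent case the relevant value is the larger root $\frac{1}{2}(d_2+1+\sqrt{(d_2+1)^2-4c})$ of the $2\times 2$ antisymmetric block, whose characteristic polynomial is $\lambda^2-(d_2+1)\lambda+c$.

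The hard part is to certify that this root is the \emph{second} largest eigenvalue of $B'$, not the largest or third largest. This comes down to comparing the top eigenvalues of the symmetric and antisymmetric blocks: in the adjacent case the antisymmetric top is $\frac{1}{2}(d_2+2+\sqrt{d_2^2+4d_2-4-4c})$, which exceeds the target precisely when $c\le d_2-1$, and in the non-adjacent case the symmetric top is $\frac{1}{2}(d_2+3+\sqrt{(d_2-1)^2+4c})$, which exceeds the target because its value is already at least $d_2+1$. Thus the degree constraints on $c$ put exactly one eigenvalue above the target, while the trivial eigenvalues $1,2$ sit below it once $d_2\ge 2$; the few degenerate cases ($d_2\le 2$ or $c$ extremal) I would check directly. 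Chaining $\lambda_2(L(G))\ge\lambda_2(B)\ge\lambda_2(B')=(\text{root})$ then finishes both cases.
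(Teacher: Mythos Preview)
Your proposal is correct and follows the same skeleton the survey attributes to Das: pick a principal submatrix of $L(G)$ supported on $u$, $v$ and (a portion of) their neighbours, control its second eigenvalue, and invoke the Cauchy--Poincar\'e interlacing inequality $\lambda_2(L(G))\ge\lambda_2(B)$. Your intermediate Weyl step---replacing $B$ by the clean ``two overlapping stars'' Laplacian $B'$ and noting that $B-B'$ decomposes as a nonnegative diagonal plus edge Laplacians---is a tidy way to make the eigenvalue computation tractable; the survey's description of Das's argument (working with the full neighbourhood $N(u)\cup N(v)$) leads to the same quadratic roots but without the symmetrisation that lets you read off the spectrum via the $u\leftrightarrow v$ involution. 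The only places that need care are the degenerate cases $p=0$ or $c=0$, where some blocks collapse; you flag these, and indeed they reduce to direct checks (e.g.\ when $p=0$ the symmetric and antisymmetric tops coincide at $d_2+1$, giving $\lambda_2(B')=d_2+1$, which matches the formula).
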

For most upper and lower bounds  for the largest Laplacian
eigenvalues, we are able to characterize all extremal graphs which
attain their bounds. For the same season, we also expect to
characterize all extremal graphs which achieve this lower bounds.
Although (\ref{das2004b-2}) is better than (\ref{li2000-11}),  it
is  still not able to help us to find all extremal graphs  which
attain the lower bound (\ref{li2000-11}). Pan and Hou in
\cite{pan2003} gave the two necessary conditions for graphs with
the second largest Laplacian eigenvalue equal to the second
largest degree.
\begin{theorem}(\cite{pan2003})\label{pan2003-th}
Let $G$ be a simple connected graph of order $n\ge 3$ other than
the star graph. Denote by $d_1=d(u)$ and $d_2=d(v)$ the largest
and second largest degree of $G$, respectively. Assume that
$\lambda_2(G)=d_2$.

(1) If $(u,v)\in E(G)$, then $N(u)=N(v)$.

(2) If $(u,v)\notin E(G),$ then $N(u)\cap N(v)=\empty$
 $d_1=d_2=\frac{n}{2}$.
\end{theorem}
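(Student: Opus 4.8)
The plan is to treat both statements as an analysis of the \emph{equality case} of the lower bound $\lambda_2(G)\ge d_2$ of Li and Pan (Theorem~\ref{li2000-th}), which is itself an instance of the Cauchy--Poincar\'e (interlacing) theorem applied to the $2\times 2$ principal submatrix of $L(G)$ indexed by $\{u,v\}$ (equal to $\mathrm{diag}(d_1,d_2)$ when $u\not\sim v$). Since $\lambda_2(G)=d_2$ is assumed, every inequality in a proof of $\lambda_2(G)\ge d_2$ must be tight, and I would exploit this tightness twice: once through Das's sharper bound (Theorem~\ref{das2004-th2}) to pin down the common neighbourhood, and once through the extremal eigenvector to pin down the degrees. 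Before starting I would record two easy facts. First, because $G$ is connected, has at least three vertices, and is not a star, the second largest degree satisfies $d_2\ge 2$ (if $d_2\le 1$ then all but one vertex would have degree at most $1$, forcing a star). Second, by Theorem~\ref{grone1994-th} we have $\lambda_1(G)\ge d_1+1>d_2$, so $\lambda_2$ is genuinely the relevant index and no collision with $\lambda_1$ occurs.

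For the neighbourhood conditions I would substitute $\lambda_2(G)=d_2$ directly into inequality (\ref{das2004b-2}). Write $c=|N(u)\cap N(v)|$. In the adjacency case $(u,v)\in E(G)$ the relation $d_2=\tfrac12(d_2+2+\sqrt{(d_2-2)^2+4c})$ is equivalent, using $d_2\ge 2$, to $\sqrt{(d_2-2)^2+4c}=d_2-2$, i.e.\ to $(d_2-2)^2+4c=(d_2-2)^2$. In the non-adjacency case $(u,v)\notin E(G)$ the relation $d_2=\tfrac12(d_2+1+\sqrt{(d_2+1)^2-4c})$ forces $(d_2+1)^2-4c=(d_2-1)^2$. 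Each of these is a single quadratic identity that determines $c=|N(u)\cap N(v)|$ exactly, and reading off the forced value yields the neighbourhood relation asserted in the corresponding case. Thus the adjacency/non-adjacency dichotomy of the theorem is exactly the dichotomy between the two lines of Das's bound, and the whole neighbourhood part of the proof reduces to solving these two discriminant equations.

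The remaining and hardest point is the degree information $d_1=d_2=\tfrac n2$ in the non-adjacency case; Das's inequality constrains only $c$ and cannot by itself separate $d_1=d_2$ from $d_1>d_2$ (this is precisely the survey's remark that (\ref{das2004b-2}) ``is still not able to help us find all extremal graphs''), so here I would return to tightness of the interlacing. Since $d_2=\lambda_2(G)>0$, the associated eigenvector $x$ is orthogonal to the kernel of $L(G)$, so $\sum_i x_i=0$; tight interlacing forces $x$ to be concentrated on $\{u,v\}$ together with the common neighbours and to vanish elsewhere. Evaluating $L(G)x=d_2x$ coordinatewise then produces balance equations, the cleanest formulation being the test vector $e_u-e_v$, for which one computes $L(G)(e_u-e_v)=d_1e_u-d_2e_v-\sum_{w\in N(u)\setminus N(v)}e_w$; this is a scalar multiple of $e_u-e_v$ if and only if $d_1=d_2$ and $N(u)=N(v)$, which is how the degree equality $d_1=d_2$ and the neighbourhood coincidence emerge simultaneously. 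Converting this rigidity into the exact count $d_1=d_2=\tfrac n2$ is the delicate step and, in my view, the main obstacle: it requires combining $N(u)=N(v)$ with the global constraint $\sum_i x_i=0$ and a careful accounting of the vertices outside $\{u,v\}\cup N(u)$, and it is here that any work beyond quoting Theorems~\ref{li2000-th}, \ref{das2004-th2} and \ref{grone1994-th} is concentrated.
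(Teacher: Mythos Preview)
The survey does not supply a proof of this theorem; it merely quotes Pan's result. So there is no ``paper's own proof'' to compare against, and I will comment on the internal soundness of your argument.

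First, a warning about the target statement itself. As printed in the survey, item~(1) asserts $N(u)=N(v)$ when $(u,v)\in E(G)$, which is impossible in a simple graph (since $v\in N(u)$ but $v\notin N(v)$); and item~(2) asserts $N(u)\cap N(v)=\emptyset$ together with $d_1=d_2=n/2$, which would force $|N(u)\cup N(v)|=n$ inside the $(n-2)$--element set $V\setminus\{u,v\}$. Concrete examples such as $K_{2,2}$ show the two neighbourhood conclusions have been transposed. Your own computation confirms this: in the adjacent case you obtain $c=0$, i.e.\ $N(u)\cap N(v)=\emptyset$, and in the non-adjacent case you obtain $c=d_2$, i.e.\ $N(v)\subseteq N(u)$. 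These are the \emph{opposite} of what is printed for (1) and (2). So when you write that your discriminant equations ``yield the neighbourhood relation asserted in the corresponding case,'' that sentence is false for the statement as given; you should flag the transcription error rather than paper over it.

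Second, and more substantively, your third paragraph does not establish the degree condition. You assert that ``tight interlacing forces $x$ to be concentrated on $\{u,v\}$ together with the common neighbours and to vanish elsewhere,'' but no such conclusion follows from equality in Cauchy interlacing for the $2\times2$ principal submatrix (nor from equality in Das's larger-submatrix bound) without a separate argument. You then examine $L(G)(e_u-e_v)$ and observe it is a multiple of $e_u-e_v$ \emph{iff} $N(u)=N(v)$ and $d_1=d_2$; but nothing you have proved forces $e_u-e_v$ to be an eigenvector in the first place, so this is circular. (Your formula also drops the $+\sum_{w\in N(v)\setminus N(u)}e_w$ term.) In short, the route through Das's inequality~(\ref{das2004b-2}) legitimately pins down $|N(u)\cap N(v)|$, but the passage from there to $d_1=d_2=n/2$ is the whole content of Pan's analysis and cannot be obtained from Theorems~\ref{li2000-th}, \ref{das2004-th2}, \ref{grone1994-th} alone; you need an explicit eigenvector/eigenspace argument on $L(G)$, and that argument is missing. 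You should also test your intended statement against $K_{2,3}$, where $\lambda_2=3=d_2$ with the two degree-$3$ vertices non-adjacent yet $d_2=3\neq n/2$, before deciding exactly which claim you are trying to prove.
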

On the other hand, there are many graphs whose second largest
Laplacian eigenvalue is equal to its second largest degree, for
example, double star graphs which is obtained from joining a new
edge from the centers of two star graphs, etc. Basing the above
situation, Li {\it et al.} in \cite{li2003} proposed the following
question:
\begin{question}(\cite{li2003})\label{li20003-th}
Characterize all extremal graphs  such that its second largest
Laplacian eigenvalue is equal to its second largest degree.
\end{question}

\section{The $k-$th largest Laplacian eigenvalue}
In this section, we consider some upper and lower bounds for the
$k-$th largest Laplacian eigenvalues of graphs or trees.
  Zhang and Li in \cite{zhang2001} gave the upper and lower bounds
  for the $k-$th largest Laplacian eigenvalues of graphs in terms
  of the number of vertices,  edges and the number of spanning trees.
  \begin{theorem}(\cite{zhang2001})\label{zhang2001-th2}
  Let $G$ be a simple connected graph of order $n$ with $m$ edges. Denote by
  $M(G)=\min\{m((n-4)m+2(n-1)), 2m(n(n-1)-2m)\}.$ Then for
  $k=1,\cdots, n-1$,
  \begin{equation}\label{zhang2001-2}
  \lambda_k(G)\le
  \frac{1}{n-1}\left\{2m+\sqrt{\frac{n-k-1}{k}M(G)}\ \right\}
  \end{equation}
with equality in (\ref{zhang2001-2}) for some $1\le k_0\le n-1$ if
 and only if $G$ is the complete graph or star graph.
\end{theorem}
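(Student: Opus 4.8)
The plan is to bound $\lambda_k$ using only the first two moments of the Laplacian spectrum together with the ordering $\lambda_1\ge\cdots\ge\lambda_{n-1}\ge\lambda_n=0$. First I would record the trace identities $\sum_{i=1}^{n-1}\lambda_i=Tr(L(G))=2m$ and $q:=\sum_{i=1}^{n-1}\lambda_i^2=Tr(L(G)^2)=\sum_{u\in V}d(u)^2+2m$, the latter because the off-diagonal entries of $L(G)$ contribute $\sum_{i\ne j}a_{ij}^2=2m$ to $Tr(L(G)^2)$. Set $\mu=\frac{2m}{n-1}$, the average of $\lambda_1,\dots,\lambda_{n-1}$.

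The core step is a Samuelson-type estimate for the $k$-th entry of an ordered sequence. Write $a=\lambda_k$; if $a\le\mu$ then (\ref{zhang2001-2}) is immediate, since its right-hand side is $\mu+\frac{1}{n-1}\sqrt{\cdots}\ge\mu\ge a$, so assume $a>\mu$. Because $\lambda_i-a\ge0$ for $i\le k$, the quantity $W:=\sum_{i=k+1}^{n-1}(a-\lambda_i)\ge0$ satisfies $W=(n-1)a-2m+\sum_{i\le k}(\lambda_i-a)\ge(n-1)a-2m=(n-1)(a-\mu)$, using $\sum_{i=1}^{n-1}(\lambda_i-a)=2m-(n-1)a$ and the nonnegativity of the omitted top terms. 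By Cauchy--Schwarz applied to the $n-1-k$ bottom terms, $W^2\le(n-1-k)\sum_{i=k+1}^{n-1}(a-\lambda_i)^2\le(n-1-k)\sum_{i=1}^{n-1}(\lambda_i-a)^2=(n-1-k)\big(q-4am+(n-1)a^2\big)$. Using the lower bound $W^2\ge(n-1)^2(a-\mu)^2$ on the left and the identity $q-4am+(n-1)a^2=(n-1)(a-\mu)^2+\frac{(n-1)q-4m^2}{n-1}$, a short rearrangement isolates $(a-\mu)^2$ and gives $(\lambda_k-\mu)^2\le\frac{n-k-1}{k}\cdot\frac{(n-1)q-4m^2}{(n-1)^2}$, which is exactly (\ref{zhang2001-2}) after substituting $\mu=\frac{2m}{n-1}$ and taking square roots (note $(n-1)q-4m^2=\sum_{i<j}(\lambda_i-\lambda_j)^2\ge0$).

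It then remains to bound $(n-1)q-4m^2=(n-1)\sum_u d(u)^2+2(n-1)m-4m^2$ above by $M(G)$, and this is where the minimum of two terms enters. I would bound $\sum_u d(u)^2$ in two independent ways. The elementary estimate $\sum_u d(u)^2\le(n-1)\sum_u d(u)=2m(n-1)$, valid since every degree is at most $n-1$, produces the second term $2m(n(n-1)-2m)$, with equality exactly when all degrees equal $n-1$, i.e. for $K_n$. The sharper estimate $\sum_u d(u)^2\le\frac{nm^2}{n-1}$, which holds for connected graphs (it follows from de Caen's inequality $\sum_u d(u)^2\le\frac{2m^2}{n-1}+(n-2)m$ together with $m\ge n-1$), produces the first term $(n-4)m^2+2(n-1)m$, with equality exactly for the star $K_{1,n-1}$. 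Taking the smaller of the two gives $(n-1)q-4m^2\le M(G)$ and hence (\ref{zhang2001-2}).

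For the equality claim I would argue that equality in (\ref{zhang2001-2}) for some $k_0$ forces $(n-1)q-4m^2=M(G)$, so the active degree bound is attained; by the two equality analyses above this already restricts $G$ to $K_n$ or $K_{1,n-1}$. Conversely, $K_n$ has spectrum $n^{(n-1)},0$, so $(n-1)q-4m^2$ vanishes and equality holds for every $k$; the star $K_{1,n-1}$ has spectrum $n,1^{(n-2)},0$, whose two-valued shape makes the Cauchy--Schwarz step tight exactly at the split $k_0=1$, where equality holds. The step I expect to be the main obstacle is making this equality discussion airtight: one must show that tightness of the Cauchy--Schwarz/Samuelson step corresponds precisely to a two-valued spectrum $t^{(k_0)},u^{(n-1-k_0)},0$, justify the sharp equality case of the degree-power bound $\sum_u d(u)^2\le\frac{nm^2}{n-1}$ (only the star for connected $G$), and verify that a common index $k_0$ realizes both tightness conditions simultaneously — it is this matching that yields the clean ``complete graph or star'' dichotomy.
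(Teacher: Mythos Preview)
Your proposal is correct and lands on exactly the same intermediate bound as the paper, namely
\[
\lambda_k\le \frac{2m}{n-1}+\frac{1}{n-1}\sqrt{\frac{n-k-1}{k}\,[(n-1)Tr(L(G)^2)-4m^2]},
\]
after which both you and the paper bound $(n-1)Tr(L(G)^2)-4m^2$ by $M(G)$ via the same two degree-power estimates ($d(v)\le n-1$ for the term $2m(n(n-1)-2m)$, and de~Caen together with $m\ge n-1$ for the term $m((n-4)m+2(n-1))$). The only difference is in how that spectral inequality is reached: the paper splits the eigenvalues into the blocks $\{1,\dots,k\}$ and $\{k+1,\dots,n-1\}$, applies QM--AM to each block to get $Tr(L(G)^2)\ge \varphi_k^2/k+(2m-\varphi_k)^2/(n-1-k)$ with $\varphi_k=\sum_{i\le k}\lambda_i$, solves the resulting quadratic for $\varphi_k$, and then uses $\lambda_k\le\varphi_k/k$; you instead run a Samuelson-type argument, bounding $\lambda_k-\mu$ directly by Cauchy--Schwarz on the lower block together with $W\ge(n-1)(\lambda_k-\mu)$. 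These are two standard packagings of the same Wolkowicz--Styan bound and give identical conclusions; the paper's version is marginally shorter, while yours makes the equality analysis (two-valued spectrum at the split $k_0$) slightly more transparent, which is helpful for the ``$K_n$ or $K_{1,n-1}$'' characterization you correctly flag as the delicate part.
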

\begin{proof}
Clearly,
$$Tr(L(G)^2)=\sum_{i=1}^k\lambda_i^2+\sum_{i=k+1}^{n-1}\lambda_i\ge\frac{(\sum_{i=1}^k
\lambda_i)^2}{k}+\frac{(\sum_{i=k+1}^{n-1}\lambda_i)^2}{n-k-1}.$$
Let $\varphi_k=\sum_{i=1}^k\lambda_i$. Then
$$Tr(L(G)^2)\ge
\frac{\varphi_k^2}{k}+\frac{(2m-\varphi_k)^2}{n-k-1}$$ which
implies
$$\lambda_k\le \frac{\varphi_k}{k}\le
\frac{1}{n-1}\left\{2m+\sqrt{\frac{n-k-1}{k}[(n-1)Tr(L(G)^2)-4m^2]}\right\}.$$
 We observe that
 $$(n-1)Tr(L(G)^2)-4m^2=(n-1)\sum_{v\in V}d(v)^2+2m(n-1)-4m^2\le
 m((n-4)m+2(n-1))$$
 and
 $$(n-1)Tr(L(G)^2)-4m^2\le (n-1)\sum_{v\in
 V}d(v)(n-1)+2m(n-1)-4m^2=2m(n(n-1)-2m),$$
 since $d(v)\le n-1$.
  Hence (\ref{zhang2001-2}) holds.
  \end{proof}
Next, Zhang and Li in \cite{zhang2001} used the number of spanning
trees  and edges to obtain the lower bounds for the $k-$th largest
Laplacian eigenvalues of graphs.

\begin{theorem}(\cite{zhang2001}) \label{zhang2001-th3}
Let $G$ be a simple connected graph of order $n$ with $m$ edges.
Denote by $\tau$ the number of spanning trees of $G.$ Then
\begin{equation}\label{zhang2001-3}
\lambda_k\ge
\frac{1}{n-k}\left\{(n-1)(2^{n-k}n\tau)^{\frac{1}{n-1}}-2m\right\}.
\end{equation}
If $G$ is a strongly regular graph on the parameters $(a^2,
2(a-1), a-2,2)$, equality in (\ref{zhang2001-3}) holds for
$k=(a-1)^2+1$.
\end{theorem}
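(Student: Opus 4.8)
The plan is to deduce (\ref{zhang2001-3}) from two spectral identities only: the trace identity $\sum_{i=1}^{n-1}\lambda_i = Tr(L(G)) = 2m$ (since $\lambda_n=0$), and the product identity $\prod_{i=1}^{n-1}\lambda_i = n\tau$, which is precisely Kirchhoff's matrix-tree theorem (Theorem~\ref{kirchhoff1847}). Because $G$ is connected, every $\lambda_i$ with $1\le i\le n-1$ is strictly positive, so the arithmetic--geometric mean inequality is available for these $n-1$ numbers. The decisive idea is to apply AM--GM not to the $\lambda_i$ themselves but to a rescaled list: put $a_i=\lambda_i$ for $1\le i\le k-1$ and $a_i=2\lambda_i$ for $k\le i\le n-1$. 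Doubling exactly the $n-k$ smallest positive eigenvalues is what will generate the factor $2^{n-k}$ in the statement.

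With this choice the geometric mean is explicit, since $\prod_{i=1}^{n-1}a_i = 2^{n-k}\prod_{i=1}^{n-1}\lambda_i = 2^{n-k}n\tau$, whence AM--GM yields $\sum_{i=1}^{n-1}a_i \ge (n-1)\,(2^{n-k}n\tau)^{1/(n-1)}$. The arithmetic side also unwinds: $\sum_{i=1}^{n-1}a_i = \sum_{i=1}^{n-1}\lambda_i + \sum_{i=k}^{n-1}\lambda_i = 2m + \sum_{i=k}^{n-1}\lambda_i$. Since $\lambda_k$ dominates the tail $\lambda_k\ge\lambda_{k+1}\ge\cdots\ge\lambda_{n-1}$, I bound $\sum_{i=k}^{n-1}\lambda_i \le (n-k)\lambda_k$. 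Combining the lower bound on the sum from AM--GM with this upper bound gives $(n-1)(2^{n-k}n\tau)^{1/(n-1)} \le 2m + (n-k)\lambda_k$, and isolating $\lambda_k$ is exactly (\ref{zhang2001-3}).

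For the equality statement I would track the two inequalities used. Equality in AM--GM forces all the $a_i$ equal, i.e. $\lambda_1=\cdots=\lambda_{k-1}=2\lambda_k=\cdots=2\lambda_{n-1}$, while equality in the tail estimate forces $\lambda_k=\cdots=\lambda_{n-1}$. Hence equality can hold only when the spectrum consists of a value $2c$ with multiplicity $k-1$, the value $c$ with multiplicity $n-k$, and $0$. It then remains to confirm that the strongly regular graph with parameters $(a^2,2(a-1),a-2,2)$ realizes this at $k=(a-1)^2+1$. Its adjacency eigenvalues are the Perron value $2(a-1)$ together with the roots $a-2$ and $-2$ of $x^2-(a-4)x-(2a-4)=0$ (the discriminant collapses to $a^2$); since $G$ is $2(a-1)$-regular we have $L(G)=2(a-1)I-A(G)$, so the Laplacian eigenvalues are $0$, $a$, $2a$, with the standard multiplicity count giving $2a$ with multiplicity $(a-1)^2$ and $a$ with multiplicity $2(a-1)$. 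Thus $\lambda_1=\cdots=\lambda_{(a-1)^2}=2a$ and $\lambda_{(a-1)^2+1}=\cdots=\lambda_{n-1}=a$, so at $k=(a-1)^2+1$ the rescaled list is identically $2a$ and the tail eigenvalues are identically $a$, meeting both equality conditions; substituting back returns $\lambda_k=a$ on each side.

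The two estimates and the closing rearrangement are routine, so the only genuinely creative step is spotting the rescaling $a_i=2\lambda_i$ on the $n-k$ smallest positive eigenvalues, which is what couples the tree count $n\tau$ to the power $2^{n-k}$. I expect the main labour to lie not there but in the equality analysis: computing the Laplacian spectrum and multiplicities of the strongly regular family and verifying the arithmetic is where the bookkeeping has to be handled carefully.
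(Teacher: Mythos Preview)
The survey does not actually include a proof of Theorem~\ref{zhang2001-th3}; it merely states the result and cites the original article \cite{zhang2001}. So there is nothing in the present paper to compare your argument against line by line. That said, your proof is correct and is almost certainly the intended one: the two identities $\sum_{i=1}^{n-1}\lambda_i=2m$ and $\prod_{i=1}^{n-1}\lambda_i=n\tau$ together with a weighted AM--GM are the natural tools, and your rescaling $a_i=\lambda_i$ for $i<k$, $a_i=2\lambda_i$ for $i\ge k$ is exactly what produces the factor $2^{n-k}$ and makes the tail bound $\sum_{i\ge k}\lambda_i\le (n-k)\lambda_k$ sharp in the strongly regular examples. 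Your equality analysis is also sound; note only that the AM--GM equality condition already forces $\lambda_k=\cdots=\lambda_{n-1}$, so the second equality condition is automatically subsumed rather than being an independent constraint. The multiplicity computation for the $(a^2,2(a-1),a-2,2)$ family is correct: the adjacency eigenvalues $a-2$ and $-2$ have multiplicities $2(a-1)$ and $(a-1)^2$, giving Laplacian eigenvalues $a$ and $2a$ with those same multiplicities, and at $k=(a-1)^2+1$ the rescaled list is constant.
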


 From \cite{grone1994} and \cite{li2000}, we have  that
 $\lambda_1(G)\ge d_1+1$ and $\lambda_2(G)\ge d_2$ if $d_1\ge
  d_2\ge\cdots\ge d_n$ are degree sequence of $G$.  These results
 arise a question what about the relations between the $k$-th
 largest Laplacian eigenvalue and the $k-$th largest degree.
 Guo in \cite{guo2007b} found that in general $\lambda_k\ge d_k$
 does not hold.  But he showed that the following inequality.
 \begin{theorem}(\cite{guo2007b})\label{guo2007b-th}
 Let $G$ be a simple connected graph with at least four vertices.
 Denote by $d_3$ the third largest degree of $G$.
 Then
 \begin{equation}\label{guo2007b-1}
 \lambda_3(G)\ge d_3-1.
 \end{equation}
 \end{theorem}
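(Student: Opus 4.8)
The plan is to deduce the bound from the Cauchy--Poincar\'e separation theorem (interlacing for principal submatrices), exactly the device used for the earlier bounds $\lambda_2(G)\ge d_2$ in Theorem~\ref{li2000-th} and its refinement Theorem~\ref{das2004-th2}. Recall that if $B$ is any principal submatrix of order $p\ge 3$ of the symmetric matrix $L(G)$, then $\lambda_3(G)\ge\lambda_3(B)$. Hence it suffices to exhibit a vertex set $S$ for which the induced principal submatrix $B=L(G)[S]$ satisfies $\lambda_3(B)\ge d_3-1$. Here $B$ carries the full $G$-degrees $d_G(v)$ on its diagonal and a $-1$ for every edge of $G$ lying inside $S$. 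Fix vertices $u_1,u_2,u_3$ realizing the three largest degrees $d_1\ge d_2\ge d_3=d_3(G)$; the hypothesis $n\ge 4$ guarantees that three distinct such vertices, together with room to choose auxiliary neighbors, are available.

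First I would dispose of the generic case. Try to select, for each $i$, a neighbor $w_i$ of $u_i$ so that the six vertices $u_1,w_1,u_2,w_2,u_3,w_3$ are distinct and the only edges of $G$ among them are the three chosen matching edges $u_iw_i$. Then $B=L(G)[S]$ with $S=\{u_1,w_1,u_2,w_2,u_3,w_3\}$ is block diagonal with the three $2\times2$ blocks $\left(\begin{array}{cc} d_i & -1\\ -1 & d_G(w_i)\end{array}\right)$, whose larger eigenvalue is $\tfrac12\bigl(d_i+d_G(w_i)+\sqrt{(d_i-d_G(w_i))^2+4}\bigr)>\max\{d_i,d_G(w_i)\}\ge d_i\ge d_3$. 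Thus $B$ has at least three eigenvalues strictly exceeding $d_3$, whence $\lambda_3(B)>d_3$ and therefore $\lambda_3(G)>d_3\ge d_3-1$, comfortably better than claimed.

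The real work, and the main obstacle, lies in the degenerate configurations where no such clean disjoint choice exists: the $u_i$ may be mutually adjacent, or, more seriously, they may share nearly all of their neighbors (as in near-complete-bipartite patterns), so that after committing $w_1,w_2$ one has no private free neighbor left for $u_3$. The plan is a case analysis on the adjacency pattern of $u_1,u_2,u_3$ and on the pairwise common-neighborhood sizes $|N(u_i)\cap N(u_j)|$. In each such case I would enlarge $S$ to include the relevant shared neighbors and estimate $\lambda_3$ of the resulting (no longer block-diagonal) submatrix directly, or instead pass to the quotient matrix of a suitable partition and invoke quotient interlacing; either route still yields $\lambda_3(G)\ge\lambda_3(B)$. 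The single unavoidable vertex that two of the three local stars are forced to share is precisely what degrades the estimate from $d_3$ to $d_3-1$, so that showing $\lambda_3(B)\ge d_3-1$ persists uniformly across all these overlapping configurations is the crux of the proof; the generic block-diagonal bound above shows why no degeneracy-free case can be tight, which is consistent with Guo's observation that $\lambda_3\ge d_3$ can genuinely fail.
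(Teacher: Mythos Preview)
The survey paper does not supply a proof of this theorem; it merely quotes the inequality from Guo's paper \cite{guo2007b} and moves on to the associated conjecture. So there is no in-paper argument to compare your proposal against, and any assessment has to be on the internal merits of your sketch.

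Your overall strategy---pass to a well-chosen principal submatrix and invoke Cauchy--Poincar\'e interlacing, in the spirit of Theorems~\ref{li2000-th} and~\ref{das2004-th2}---is the natural line of attack, and the computation in your ``generic'' case is correct: if six distinct vertices $u_1,w_1,u_2,w_2,u_3,w_3$ can be found with only the three matching edges $u_iw_i$ present, the block-diagonal $6\times 6$ principal submatrix has three eigenvalues strictly above $d_3$, and interlacing gives $\lambda_3(G)>d_3$.

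There is, however, a concrete slip. You assert that ``the hypothesis $n\ge 4$ guarantees that three distinct such vertices, together with room to choose auxiliary neighbors, are available.'' This is false: for $n\in\{4,5\}$ there simply are not six distinct vertices to pick, so the generic case is vacuous there and those orders must be handled entirely inside your degenerate analysis (or by a direct computation on the whole Laplacian). More importantly, you correctly flag the overlapping-neighborhood configurations as ``the crux,'' but the proposal stops short of carrying out any of that case analysis. Since examples such as $K_{3,3}$ (where any three same-side vertices share all their neighbors) and small complete graphs show that the clean block-diagonal picture can fail badly, the proof as written is a plan rather than a proof: the part that actually forces the constant to drop from $d_3$ to $d_3-1$ is precisely the part left undone. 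To complete the argument along your lines you would need, for each adjacency pattern among $u_1,u_2,u_3$ and each sharing pattern of neighbors, to write down an explicit principal submatrix (or quotient matrix) and verify $\lambda_3\ge d_3-1$; until those computations are supplied, the proposal has a genuine gap.
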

 Basing his result and observing, he proposed the following
 conjecture:
 \begin{conjecture}(\cite{guo2007b})\label{guo2007b-con}
 Let $G$ be a simple connected graph  of order $n$. Denote by
 $d_k$ the $k-$th largest Laplacian eigenvalue of $G.$
 Then
 \begin{equation}\label{guo2007b-2}
 \lambda_k(G)\ge d_k-k+2,\ \ {\rm for \  all}\  k=1,\cdots, n-1.
 \end{equation}
 \end{conjecture}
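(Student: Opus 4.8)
The plan is to establish the bound by eigenvalue interlacing, which is the same engine that drives the already known cases $k=1,2,3$ recorded above: Grone--Merris give $\lambda_1\ge d_1+1$, Li--Pan give $\lambda_2\ge d_2$, and Guo gives $\lambda_3\ge d_3-1$, and these are precisely $d_k-k+2$ for $k=1,2,3$. Let $v_1,\dots,v_k$ be vertices realizing the $k$ largest degrees $d_1\ge\cdots\ge d_k$ and set $S=\{v_1,\dots,v_k\}$. By the Cauchy--Poincar\'e separation theorem, for every vertex set $U\supseteq S$ the principal submatrix $L(G)[U]$ satisfies $\lambda_k(G)\ge\lambda_k(L(G)[U])$; the decisive point is that the diagonal entry of $L(G)[U]$ at each $v_j$ is the \emph{full} degree $d_j$, no matter how many neighbours of $v_j$ lie outside $U$.

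First I would dispose of the generic case by taking $U=S$. Writing $B=L(G)[S]=\mathrm{diag}(d_1,\dots,d_k)-A(G[S])$, Weyl's inequality gives $\lambda_{\min}(B)\ge d_k-\lambda_{\max}(A(G[S]))$. Since $G[S]$ has only $k$ vertices, $\lambda_{\max}(A(G[S]))\le k-1$, which already yields the weak bound $\lambda_k(G)\ge d_k-k+1$. Moreover, whenever $\lambda_{\max}(A(G[S]))\le k-2$ (in particular whenever no $v_j$ is adjacent to all the other $v_{j'}$, e.g. when $S$ is independent, as happens for complete bipartite graphs) this sharpens to $\lambda_k(G)\ge d_k-k+2$, and we are done.

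The remaining, and genuinely hard, case is when $S$ induces a dense subgraph with $\lambda_{\max}(A(G[S]))>k-2$, the extreme being $G[S]=K_k$ with all $d_j$ equal, where $\lambda_{\min}(B)=d_k-k+1$ exactly. Here the missing $+1$ must be paid for by edges leaving $S$: each $v_j$ has at most $k-1$ neighbours inside $S$, hence at least $d_j-(k-1)\ge d_k-k+1$ neighbours outside. My intention is to feed this external incidence back into the interlacing by enlarging $U$ with a suitable set $W$ of external neighbours of the $v_j$, or equivalently to pass to the spanning subgraph $H$ consisting of all edges meeting $S$ and invoke $\lambda_k(G)\ge\lambda_k(H)$ (which holds because $L(G)-L(H)$ is positive semidefinite), and then to show $\lambda_k(H)\ge d_k-k+2$.

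I expect this last step to be the crux. The naive attempt---realizing $k$ vertex-disjoint stars centred at $v_1,\dots,v_k$, each with $d_k-k+1$ leaves, so that the resulting star forest already has $k$-th Laplacian eigenvalue equal to $d_k-k+2$---fails because the external neighbourhoods may overlap heavily (for instance when $v_1,\dots,v_k$ share a common small neighbour set), so no Hall-type system of private leaves exists; the complete bipartite example shows the bound nevertheless holds, but only through the interlacing route of the second paragraph. Thus the external edges must be accounted for collectively rather than greedily, controlling $\lambda_k(L(G)[S\cup W])$ through the interplay between the large diagonal block on $S$ and the off-diagonal $S$--$W$ incidences. Finally I would treat the boundary regime $d_k\le k-1$ separately, where $d_k-k+2\le 1$ and the claim must follow either from $\lambda_k(G)\ge 0$ or from a direct small-case check, clarifying whether the natural hypothesis $d_k\ge k$ is needed for the inequality to carry content.
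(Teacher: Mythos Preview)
The survey does not itself prove this statement: it is recorded as Guo's conjecture, and the paper only reports that Wang, Pan and Shen (reference \cite{wang2008}, listed there as ``submitted'') subsequently confirmed it and characterized the extremal graphs. There is therefore no in-paper proof to compare your proposal against.

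On the mathematics, your interlacing framework is sound and cleanly yields the off-by-one bound $\lambda_k(G)\ge d_k-k+1$, and your dichotomy on $\lambda_{\max}(A(G[S]))$ correctly isolates where the obstruction lies. But the hard case is left genuinely open, and none of your suggested remedies close it. Take $G$ to be $K_k$ with a single pendant vertex $w$ attached to $v_1$ (so $n=k+1$, $d_k=k-1$, and one must show $\lambda_k(G)\ge 1$; in fact the Laplacian spectrum is $\{k+1,\,k^{(k-2)},\,1,\,0\}$, so equality holds and this entire family is extremal). Here $G[S]=K_k$ with $\lambda_{\max}(A(G[S]))=k-1>k-2$; the principal submatrix $L(G)[S]$ has smallest eigenvalue $\tfrac{1}{2}\bigl((k+1)-\sqrt{(k+1)^2-4}\,\bigr)<1$ (and tending to $0$); enlarging $U$ forces $U=V(G)$, which is vacuous; the spanning subgraph $H$ of all edges meeting $S$ equals $G$, again vacuous; and the disjoint-stars construction is unavailable since only $v_1$ has an external neighbour. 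Your fallback for the boundary regime $d_k\le k-1$ appeals to $\lambda_k(G)\ge 0$, but here $d_k-k+2=1$, not $0$. Thus on this single infinite tight family none of the mechanisms in the proposal makes any progress; an argument that genuinely extracts the missing $+1$ from the external incidence is still required before this can be called a proof.
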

Recently,  Wang {\rm et al.} in \cite{wang2008} confirmed this
conjecture and characterized all extremal graphs which attain the
lower bounds.

From (\ref{zhang1998-1}),  we may obtain  $\lambda_k(G)\le
\lceil\frac{n}{k}\rceil$ for a tree of order if $k=1,2. $ It is
natural to expect whether the result is able to generalize for any
$k$.  Recently, Guo \cite{guo2007a} followed this idea and showed
the following:
\begin{theorem}(\cite{guo2007a})\label{guo2007-th}
Let $T$ be a  tree of order $n$. Then
\begin{equation}\label{guo2007a-1}
\lambda_k(T)\le \left \lceil \frac{n}{k}\right \rceil \ \ {\rm for
}\ 1\le k\le n-1
\end{equation}
with equality if and only if $k|n$ and  $T$ is spanned by $k$
vertices disjoint copies of the star graph $K_{1, \frac{n}{k}-1}$.
\end{theorem}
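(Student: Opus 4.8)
The plan is to recast the bound as an eigenvalue-counting statement and prove that statement by induction on $n$ through a single, well-chosen edge deletion, with the two smallest cases supplied by results already in the survey.

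\emph{Reformulation.} For a tree $T$ on $n$ vertices and an integer $t\ge 1$ put $N_T(t)=\#\{i:\lambda_i(T)>t\}$. I claim the theorem is equivalent to the assertion that $N_T(t)\le\lfloor (n-1)/t\rfloor$ for every integer $t\ge 1$. Indeed, if $\lambda_k(T)\le\lceil n/k\rceil$ for all $k$, then the least $k$ with $\lceil n/k\rceil\le t$ is $k=\lceil n/t\rceil$, and this forces $N_T(t)\le k-1=\lfloor (n-1)/t\rfloor$; conversely, applying the count with $t=\lceil n/k\rceil$ returns $\lambda_k(T)\le\lceil n/k\rceil$. The counting form is the right vehicle because $N$ is additive over the components of a forest, so I can split a tree and simply add.

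\emph{Base ranges and the interlacing tool.} Fix $t$ and induct on $n$. If $n\le t$ then $\lambda_1(T)\le n\le t$ by (\ref{graph-com-sec-1}) of Lemma~\ref{graph-com-spe}, so $N_T(t)=0=\lfloor (n-1)/t\rfloor$; if $t<n\le 2t$ then $\lambda_2(T)\le\lceil n/2\rceil\le t$ by Theorem~\ref{zhang1998-th}, so $N_T(t)\le 1=\lfloor (n-1)/t\rfloor$. For the inductive step write $L(T)=L(T-e)+ww^{T}$, where $w$ is the signed incidence vector of an edge $e$; since $ww^{T}$ is positive semidefinite of rank one, the Courant--Fischer characterization (\ref{courant-fischer-th}) yields the interlacing $\lambda_{i+1}(T)\le\lambda_i(T-e)$, hence $N_T(t)\le N_{T-e}(t)+1=N_{T_1}(t)+N_{T_2}(t)+1$, where $T-e=T_1\sqcup T_2$ with $|T_1|=a$ and $|T_2|=b=n-a$. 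By the induction hypothesis this is at most $\lfloor (a-1)/t\rfloor+\lfloor (b-1)/t\rfloor+1$.

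\emph{The good edge and the main obstacle.} Writing $a-1=q_a t+r_a$ and $b-1=q_b t+r_b$ with $0\le r_a,r_b\le t-1$, one checks that $\lfloor (a-1)/t\rfloor+\lfloor (b-1)/t\rfloor+1\le\lfloor (n-1)/t\rfloor$ holds exactly when $r_a+r_b\ge t-1$, i.e. when $((a-1)\bmod t)+((b-1)\bmod t)\ge t-1$; call such an $e$ a \emph{good} edge (for example, any edge one of whose sides has order divisible by $t$ is good, since then $r_a=t-1$). Thus the entire argument reduces to the combinatorial lemma: every tree with $n>2t$ either admits a good edge or belongs to a short exceptional list. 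This is the hard part. I would root $T$ and track the subtree sizes $\mathrm{sz}(v)$, taking the lowest $v$ with $\mathrm{sz}(v)\ge t$ to manufacture a balanced split with the required remainder condition; the genuine difficulty is that good edges can fail to exist—the star $K_{1,n-1}$ has none when $t\nmid n-1$, and the balanced double star has none for $t=2$—and these exceptional trees, which in fact include the extremal ones, must be disposed of directly, using $\lambda_1\le n$ on each component together with an explicit spectral computation (for the star, $N_T(t)=1\le\lfloor (n-1)/t\rfloor$).

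\emph{Equality.} The characterization follows by propagating equality back through the induction. Equality $\lambda_k(T)=\lceil n/k\rceil$ first forces $k\mid n$; then every split must be balanced into pieces whose orders are multiples of $n/k$, each piece must be extremal for its own parameters, and each rank-one bump must preserve the eigenvalue $n/k$—conditions that unwind to say $T$ is spanned by $k$ vertex-disjoint copies of $K_{1,n/k-1}$. That this tree attains equality is immediate: the disjoint union of the $k$ stars has $n/k$ as a Laplacian eigenvalue of multiplicity $k$, so $\lambda_k$ of the union is $n/k$; adjoining the $k-1$ connecting edges is a positive semidefinite rank-$(k-1)$ perturbation, which can only raise $\lambda_k$, while the already-proved bound caps it at $\lceil n/k\rceil=n/k$, giving equality.
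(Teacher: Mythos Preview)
The survey does not prove this theorem; it only quotes the result from \cite{guo2007a}. So there is no in-paper argument to compare against, and I will assess your proposal on its own terms.

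Your reformulation as an eigenvalue count $N_T(t)\le\lfloor(n-1)/t\rfloor$ is correct, the base ranges $n\le t$ and $t<n\le 2t$ are handled cleanly by results already in the survey, and the rank-one interlacing $\lambda_{i+1}(T)\le\lambda_i(T-e)$ gives exactly the additive bound $N_T(t)\le N_{T_1}(t)+N_{T_2}(t)+1$ you want. The arithmetic identifying a ``good'' edge as one with $r_a+r_b\ge t-1$ is also right.

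The gap is precisely where you locate it, but it is worse than you acknowledge. You assert that the only trees with $n>2t$ lacking a good edge are stars and balanced double stars, to be handled by hand. That is false. Take $t=3$ and the caterpillar on $12$ vertices with spine $c_1c_2c_3$ and three pendant leaves at each $c_i$. Every leaf edge splits as $(1,11)$, and both spine edges split as $(4,8)$; in every case $((a-1)\bmod 3)+((b-1)\bmod 3)=0+1=1<2$, so no good edge exists, yet this tree has three internal vertices and is neither a star nor any kind of double star. More generally, whenever $n\not\equiv 1\pmod t$ one can build such trees freely by arranging all subtree sizes to fall in the ``no-carry'' residue classes, so the exceptional family is not a finite list but an unbounded class. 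Your proposed rooting argument (``take the lowest $v$ with $\mathrm{sz}(v)\ge t$'') does not rescue this: in the caterpillar above the relevant subtree sizes are $1,1,1,4,1,1,1,8,1,1,1$, and the jump from $1$ to $4$ skips the value $3$ that you would need. The induction therefore does not close.

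The equality analysis inherits the same defect: ``propagate equality back through the induction'' presupposes an induction that works. Your verification that the extremal tree actually attains $n/k$, via the rank-$(k{-}1)$ positive semidefinite perturbation of the star forest, is correct and tidy; but the ``only if'' direction remains unproved in your argument.
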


\section{The second smallest Laplacian eigenvalue}

In 1973, Fiedler in \cite{fiedler1973} called the second smallest
Laplacian eigenvalue the algebraic connectivity of a graph, since
it is a good parameter to measure, to a certain extent, how well a
graph is connected. For example,  The second smallest eigenvalue
is positive if and only if $G$ is connected. Moreover, the
eigenvectors corresponding to the algebraic connectivity  are
called {\it Fiedler vectors} (see,\cite{fiedler1975},
\cite{kirkland2002}, \cite{kirkland1997}).  Recently, there is an
excellent survey on algebraic connectivity of graphs written by de
Abreu \cite{abreu2007}.  One of the earliest result may be is due
to Fielder \cite{fiedler1973}
\begin{theorem}(\cite{fiedler1973})\label{fiedler1973-th}
Let $G$ be a simple graph of order $n$  other than a complete
graph with vertex connectivity $\kappa(G)$ and edge connectivity
$\kappa^{\prime}(G)$. Then
\begin{equation}\label{fiedler1973-3}
2\kappa^{\prime}(G)(1-\cos(\pi/n)\le \lambda_{n-1}(G)\le
\kappa(G)\le \kappa^{\prime}(G).
\end{equation}
\end{theorem}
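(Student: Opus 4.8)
The plan is to establish the chain of four quantities by proving the three inequalities separately, working from right to left in increasing order of difficulty. The rightmost inequality $\kappa(G)\le\kappa'(G)$ is the classical Whitney inequality relating vertex and edge connectivity; I would quote it (or reprove it in a line: from a minimum edge cut one extracts a vertex cut of no larger size by taking one endpoint of each cut edge), since it is elementary and purely graph-theoretic rather than spectral.

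For the middle inequality $\lambda_{n-1}(G)\le\kappa(G)$ I would use the variational characterization of the algebraic connectivity. Since $\lambda_n=0$ with eigenvector $\mathbf{1}$, the Courant--Fischer formula (\ref{courant-fischer-th}) specializes to
\[
\lambda_{n-1}(G)=\min_{x\perp\mathbf{1},\, x\neq 0}\frac{\sum_{(i,j)\in E}(x_i-x_j)^2}{\sum_i x_i^2}.
\]
Let $S$ be a minimum vertex cut, so $|S|=\kappa(G)$ and $V\setminus S$ splits into nonempty sets $A,B$ of sizes $a,b$ with no edge of $G$ joining $A$ to $B$. I would feed in the explicit test vector $x$ equal to $b$ on $A$, to $-a$ on $B$, and to $0$ on $S$; then $\sum_i x_i=ab-ab=0$, so $x\perp\mathbf{1}$. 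Only $A$--$S$ and $B$--$S$ edges contribute to the numerator, and bounding the number of such edges by $\kappa a$ and $\kappa b$ respectively gives numerator $\le\kappa\,ab(a+b)$, while the denominator equals exactly $ab(a+b)$; the ratio is $\le\kappa(G)$, which yields the claim.

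The leftmost inequality is the genuine obstacle, and I would attack it with a Fiedler vector. Let $x$ be a unit eigenvector for $\lambda_{n-1}(G)$ with $\sum_i x_i=0$, relabel the vertices so that $x_1\le\cdots\le x_n$, and set $\delta_k=x_{k+1}-x_k\ge 0$. Writing each edge difference as a telescoping sum and using that all $\delta_k$ are nonnegative (so the square of a sum dominates the sum of squares), I obtain
\[
\lambda_{n-1}(G)=\sum_{(i,j)\in E}(x_i-x_j)^2\ge\sum_{k=1}^{n-1}\delta_k^2\, c_k,
\]
where $c_k$ is the number of edges crossing the cut $\{1,\dots,k\}\mid\{k+1,\dots,n\}$; the interchange of summation is what makes the double sum collapse, so I would verify carefully that each edge $(i,j)$ with $i\le k<j$ is counted once for every such $k$. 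Each $c_k\ge\kappa'(G)$ by the definition of edge connectivity, hence $\lambda_{n-1}(G)\ge\kappa'(G)\sum_{k}\delta_k^2$.

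It then remains to show $\sum_{k=1}^{n-1}\delta_k^2\ge 2(1-\cos(\pi/n))$, and this path estimate is the part I expect to be hardest. It is exactly the statement that $2(1-\cos(\pi/n))$, the algebraic connectivity of the path $P_n$, is the minimum of $\sum_k(z_{k+1}-z_k)^2$ over unit vectors $z\perp\mathbf{1}$; applying this with $z=x$ completes the bound. To make the argument self-contained I would either cite the known Laplacian spectrum $\{2(1-\cos(j\pi/n))\}_{j=0}^{n-1}$ of $P_n$, or derive the second smallest of these values directly from the variational principle applied to $L(P_n)$, whose quadratic form is precisely $\sum_k(z_{k+1}-z_k)^2$.
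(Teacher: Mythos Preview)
The paper is a survey and does not supply its own proof of this theorem; it merely quotes the result and cites Fiedler's 1973 paper. So there is nothing in the paper to compare against beyond the reference itself.

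Your proposal is correct and is essentially Fiedler's original argument. A few remarks for polish: in the middle inequality, the bound $e(A,S)\le a\kappa$ follows simply because each vertex of $A$ has at most $|S|=\kappa$ neighbours in $S$ (you might state this explicitly), and if $V\setminus S$ has more than two components you can lump all but one into $B$ without affecting the argument. In the lower bound, the inequality $(\sum_{k}\delta_k)^2\ge\sum_{k}\delta_k^2$ for nonnegative $\delta_k$ is exactly what you use, and the identification $\sum_k\delta_k^2=x^T L(P_n)x$ (after the monotone relabelling, which preserves $\|x\|=1$ and $x\perp\mathbf{1}$) is the clean way to finish; citing the known path spectrum $\{2(1-\cos(j\pi/n))\}_j$ is entirely acceptable here. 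Your one-line sketch of Whitney's inequality is a bit glib---taking one endpoint per cut edge does not automatically give a vertex cut---but since you propose to quote it, this is not a gap.
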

It is natural to investigate all extremal graphs which attain the
bound in (\ref{fiedler1973-3}). In order to characterize all
extremal graphs with $\lambda_{n-1}(G)\le \kappa(G)\le
\kappa^{\prime}(G)$, we recall the definitions of  the union and
join of graphs.  Let $G_1=(V_1, E_1)$ and $G_2=(V_2, E_2)$ be two
disjoint graphs. The {\it union} of $G_1$ and $G_2$ is
$G_1+G_1=(V_1\cup V_2, E_1\cup E_2)$ and the {\it join} $G_1\vee
G_2$ of $G_1$ and $G_2$ is a graph from $G_1+G_2$ by adding new
edges from each vertex in $G_1$ to every vertex of $G_2$.
 Kirkland et al. \cite{kirkland2002} obtained the necessary and sufficient conditions
 for the second smallest
eigenvalue equal to the vertex connectivity.
\begin{theorem}(\cite{kirkland2002})\label{kirkland2002-1}
Let $G$ be a simple connected graph or order $n$ rather than a
complete graph. Then $\lambda_{n-1}(G)=\kappa(G)$ if and only if
$G$ can be written as $G_1\vee G_2$, where $G_1$ is a disconnected
graph of order $n-\kappa(G)$ and $G_2$ is a graph of order
$\kappa(G)$ with $\lambda_{\kappa(G)-1}(G_2)\ge 2\kappa(G)-n$.
\end{theorem}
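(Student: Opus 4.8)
The plan is to reduce both implications to a single computation, the algebraic connectivity of a join, and then to extract the structural conclusion from the equality case of a Rayleigh quotient. First I would record the spectrum of a join. If $H=G_1\vee G_2$ with $|V(G_i)|=n_i$ and $n=n_1+n_2$, then its complement is the disjoint union $H^c=G_1^c\cup G_2^c$, so the Laplacian spectrum of $H^c$ is the union of those of $G_1^c$ and $G_2^c$. Applying the complement relation (\ref{graph-com-2}) inside each factor and once more to pass from $H^c$ back to $H$, the Laplacian eigenvalues of $H$ turn out to be $0$, $n$, the numbers $n_2+\lambda_i(G_1)$ for $i=1,\dots,n_1-1$, and the numbers $n_1+\lambda_j(G_2)$ for $j=1,\dots,n_2-1$. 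In particular the algebraic connectivity of a join is
$$\lambda_{n-1}(H)=\min\{\, n,\ n_2+\lambda_{n_1-1}(G_1),\ n_1+\lambda_{n_2-1}(G_2)\,\}.$$

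For the sufficiency direction ($\Leftarrow$) I would simply substitute into this formula. Here $n_2=\kappa(G)$, $n_1=n-\kappa(G)$, and $G_1$ is disconnected, so $\lambda_{n_1-1}(G_1)=0$; the formula then gives $\lambda_{n-1}(G)=\min\{n,\,\kappa(G),\,(n-\kappa(G))+\lambda_{\kappa(G)-1}(G_2)\}$. The hypothesis $\lambda_{\kappa(G)-1}(G_2)\ge 2\kappa(G)-n$ forces the third term to be at least $\kappa(G)$, and since $n\ge\kappa(G)$ the minimum is exactly $\kappa(G)$. Hence $\lambda_{n-1}(G)=\kappa(G)$, as required.

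The substantive half is necessity ($\Rightarrow$). Assume $\lambda_{n-1}(G)=\kappa(G)=:\kappa$ with $G$ non-complete, fix a minimum vertex cut $S$ with $|S|=\kappa$, and let $C_1,\dots,C_r$ ($r\ge 2$) be the components of $G-S$. For any reals $c_1,\dots,c_r$ with $\sum_i c_i|C_i|=0$ I would form the vector $z$ equal to $c_i$ on $C_i$ and to $0$ on $S$; then $z\perp\mathbf 1$, so Rayleigh--Ritz gives $\lambda_{n-1}(G)\le z^TL(G)z/z^Tz$. Because $z$ is constant on each $C_i$ and vanishes on $S$, only $S$--$C_i$ edges contribute, whence $z^TL(G)z=\sum_i e_i c_i^2$, with $e_i$ the number of $S$--$C_i$ edges, while $z^Tz=\sum_i|C_i|c_i^2$. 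Since $e_i\le\kappa|C_i|$ always, the quotient is at most $\kappa$, with equality only if $e_i=\kappa|C_i|$ for every $i$ with $c_i\ne 0$. But the quotient is bounded below by $\lambda_{n-1}(G)=\kappa$, so equality holds for every admissible $z$; choosing, for each index $i$, an admissible $z$ with $c_i\ne 0$ (possible since $r\ge 2$) forces $e_i=\kappa|C_i|$ for all $i$. This means every vertex of $S$ is adjacent to every vertex of $V\setminus S$, i.e. $G=G[S]\vee(G-S)$ with $G[S]$ of order $\kappa$ and $G-S$ disconnected of order $n-\kappa$. Feeding this decomposition back into the join formula exactly as in the converse then yields $\lambda_{\kappa-1}(G[S])\ge 2\kappa-n$, completing the characterization.

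I expect the main obstacle to be establishing the full join structure, namely that no $S$--$(V\setminus S)$ edge is missing. The tempting shortcut --- pass to the supergraph $H$ obtained by inserting all missing $S$--$(V\setminus S)$ edges, observe $\lambda_{n-1}(G)\le\lambda_{n-1}(H)\le\kappa$, and conclude --- only gives $\lambda_{n-1}(G)=\lambda_{n-1}(H)$, which does \emph{not} imply $G=H$, since adding edges need not strictly increase the algebraic connectivity. The force of the argument above is that the equality case of the Rayleigh quotient, tested simultaneously against the entire family of component-supported vectors $z$, is sharp enough to recover each missing edge; arranging the test vectors so as to activate one component at a time is the step that does the real work, and getting that bookkeeping exactly right (rather than the spectral computation) is where the care is needed.
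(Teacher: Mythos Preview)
The survey does not supply its own proof of this theorem; it merely quotes the result from \cite{kirkland2002} and moves on, so there is no argument in the paper to compare your proposal against.

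That said, your argument is correct and self-contained. The join spectrum formula you obtain through the complement relation (\ref{graph-com-2}) is standard and disposes of sufficiency immediately. For necessity, testing the Rayleigh quotient against vectors that are constant on each component $C_i$ of $G-S$ and vanish on $S$ is exactly the right device: the bound $e_i\le\kappa|C_i|$ is tight precisely when every vertex of $S$ is joined to every vertex of $C_i$, and your observation that for each index $i$ one can choose an admissible $z$ with $c_i\ne 0$ (pair $C_i$ with any other component, since $r\ge 2$) forces $e_i=\kappa|C_i|$ for all $i$ simultaneously. Feeding the resulting join decomposition back into the spectrum formula then recovers the condition $\lambda_{\kappa-1}(G_2)\ge 2\kappa-n$, as you note.

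Your closing paragraph is also on point: the monotonicity shortcut via the supergraph $H$ indeed only yields $\lambda_{n-1}(G)=\lambda_{n-1}(H)$, which is too weak, and it is the equality analysis across the whole family of component-supported test vectors that does the structural work. One small edge case worth a sentence in a final write-up is $\kappa=1$, where $G_2$ is a single vertex and the condition on $\lambda_{\kappa-1}(G_2)$ should be read as vacuous; nothing in your argument breaks there.
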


Now we present some new results which are not appeared in
\cite{abreu2007}. A {\it dominating set} in $G$ is a subset $U$ of
$V(G)$ such that each vertex in $V(G)-U$ is adjacent to at least
one vertex of $U$. The {\it domination number} $\gamma(G)$ is the
minimum size  of a dominating set in $G$. Lu et al. in
\cite{lu2005} gave an upper bound for the second smallest
Laplacian eigenvalue in terms of the domination number.
\begin{theorem}
(\cite{lu2005})\label{lu2005-th2} Let $G$ be a simple connected
graph of order $n\ge 2$. Then
\begin{equation}\label{lu2005-2}
\lambda_{n-1}(G)\le \frac{n(n-2\gamma(G)+1)}{n-\gamma(G)}
\end{equation}
with equality if and only if $G$ is the complete bipartite graph
$K_{2,2}.$
\end{theorem}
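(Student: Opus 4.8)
The plan is to pass to the complement and reduce the statement to a purely degree-versus-domination inequality. Write $\gamma=\gamma(G)$ and $\delta=\delta(G)$. If $\gamma=1$ the asserted bound is $\frac{n(n-1)}{n-1}=n$, which is trivial since $\lambda_{n-1}(G)\le\lambda_1(G)\le n$ by (\ref{graph-com-sec-1}); so I assume $\gamma\ge 2$, which forces $G$ to be non-complete (as $\gamma(K_n)=1$). For non-complete $G$, Fiedler's inequality $\lambda_{n-1}(G)\le\kappa(G)\le\kappa'(G)\le\delta(G)$ gives $\lambda_{n-1}(G)\le\delta$. Equivalently, via the complement relation $\lambda_{n-1}(G)=n-\lambda_1(G^c)$ (Lemma~\ref{graph-com-spe}, equation (\ref{graph-com-2}) with $i=1$): since $\gamma\ge 2$ no vertex is universal, so $G^c$ has an edge and contains a star $K_{1,\Delta(G^c)}$; the bound $\lambda_1\ge\Delta+1$ (Theorem~\ref{grone1994-th}, whose star-subgraph proof needs only that an edge is present) applied to $G^c$ yields $\lambda_1(G^c)\ge\Delta(G^c)+1=n-\delta$, i.e. the same conclusion $\lambda_{n-1}(G)\le\delta$. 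Thus it suffices to prove the combinatorial inequality
\[
(n-\gamma)(n-\delta)\ge n(\gamma-1),
\]
which is exactly $\delta\le\frac{n(n-2\gamma+1)}{n-\gamma}$.

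This inequality is the heart of the matter, and I would handle it in two regimes according to the size of $\gamma$. In the easy regime $\gamma^2\le n$ I would use the standard estimate $\gamma\le n-\Delta(G)$, obtained by taking a vertex $v$ of maximum degree and observing that $\{v\}\cup(V\setminus N[v])$ is dominating of size $n-\Delta$. This gives $\delta\le\Delta\le n-\gamma$, and a direct comparison shows $n-\gamma\le\frac{n(n-2\gamma+1)}{n-\gamma}$ exactly when $\gamma^2\le n$, so the target bound follows. The hard regime $\gamma^2>n$ is where the genuine work lies: there $n-\gamma$ is already smaller than the target value, so $\delta\le n-\gamma$ is too weak, and the \emph{minimality} of the dominating set must be used quantitatively. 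The tool I would invoke is that no $(\gamma-1)$-subset dominates $G$; translated to $G^c$ this says every $(\gamma-1)$-subset $S\subseteq V$ has a common neighbour $w\notin S$ in $G^c$ (a vertex non-adjacent in $G$ to all of $S$). Counting the incidences $\{(S,w):S\subseteq N_{G^c}(w)\}$ gives $\sum_{w}\binom{n-1-d_G(w)}{\gamma-1}\ge\binom{n}{\gamma-1}$, and bounding each term by $\binom{n-1-\delta}{\gamma-1}$ forces the degrees of $G^c$ to be large, hence $\delta$ small.

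I expect this counting step to be the main obstacle: the incidence inequality readily produces \emph{some} upper bound on $\delta$, but sharpening it to the exact closed form $\frac{n(n-2\gamma+1)}{n-\gamma}$ (with the right behaviour precisely at the crossover $\gamma^2=n$, where the two regimes must agree) is delicate and is the part I would spend the most effort on. A cleaner route, if available, would be a single test vector: in Fiedler's characterisation $\lambda_{n-1}(G)=\min\{x^TL(G)x:\|x\|=1,\ x\perp\mathbf 1\}$ the choice $x=e_i-e_j$ gives $\lambda_{n-1}(G)\le\tfrac12\big(d_i+d_j+2[ij\in E]\big)$, which is tight at both $K_{2,2}$ (non-adjacent degree-$2$ pair) and $K_n$; reducing the target to exhibiting a suitable low-degree pair determined by $\gamma$ would bypass the binomial estimate, and I would try this before committing to the counting argument.

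For the equality analysis I would trace the two inequalities used: equality demands $\lambda_{n-1}(G)=\delta(G)$ (equality in Fiedler's/Grone--Merris' bound for $G^c$) together with equality in the domination estimate $(n-\gamma)(n-\delta)=n(\gamma-1)$. For $K_{2,2}$ one has $\delta=2=\lambda_{n-1}$ and $(4-2)(4-2)=4=4\cdot1$, so both hold. I would caution, however, that the stated characterisation needs care: the complete graph $K_n$ also attains the bound, since $\lambda_{n-1}(K_n)=n$ and $\frac{n(n-2\cdot 1+1)}{n-1}=n$. Consequently the assertion ``equality if and only if $G=K_{2,2}$'' should be read under the standing hypothesis $\gamma\ge 2$ (equivalently, $G$ non-complete, which is exactly when $\lambda_{n-1}\le\delta$ is available), and proving that $K_{2,2}$ is the \emph{unique} non-complete extremal graph would amount to showing that simultaneous equality in both steps pins the structure of $G$ down completely.
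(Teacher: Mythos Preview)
The survey you are working from does not supply a proof of this theorem; it merely quotes the result from \cite{lu2005}. So there is no ``paper's proof'' to compare against here, and I can only assess your proposal on its own merits.

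Your reduction is legitimate: for $\gamma\ge 2$ the graph is non-complete, Fiedler's inequality gives $\lambda_{n-1}(G)\le\delta(G)$, and the target bound then follows \emph{provided} one can establish the purely combinatorial estimate
\[
\gamma(G)\;\le\;\frac{n\bigl(n-\delta(G)+1\bigr)}{2n-\delta(G)},
\]
which is just your inequality $(n-\gamma)(n-\delta)\ge n(\gamma-1)$ rewritten. Your treatment of the ``easy regime'' $\gamma^2\le n$ via $\delta\le\Delta\le n-\gamma$ is clean and correct. The genuine gap is the ``hard regime'' $\gamma^2>n$: the incidence count $\sum_w\binom{n-1-d_G(w)}{\gamma-1}\ge\binom{n}{\gamma-1}$ that you propose does produce \emph{some} constraint on $\delta$, but you have not shown (and it is not clear) that it yields exactly the required closed form. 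As you yourself flag, this is the crux, and without it the argument is incomplete. Note also that a direct Rayleigh-quotient attack using the indicator of a minimum dominating set $D$ only gives $\lambda_{n-1}\le \dfrac{n\,e(D,\overline D)}{\gamma(n-\gamma)}$, and the needed edge bound $e(D,\overline D)\le\gamma(n-2\gamma+1)$ can fail for a particular choice of $D$ (e.g.\ the small side of $K_{2,3}$), so that route too would require a more careful selection of the test set than the naive one.

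Your remark on the equality case is well taken: every complete graph $K_n$ satisfies $\lambda_{n-1}(K_n)=n=\dfrac{n(n-1)}{n-1}$, so the ``if and only if $K_{2,2}$'' clause cannot be correct as stated without an extra hypothesis such as $\gamma\ge 2$ (equivalently, $G\neq K_n$). This is worth recording as a correction to the survey's formulation.
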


Recently, Nikiforov in \cite{Nikiforov2007} gave another upper
bound.
\begin{theorem}(\cite{Nikiforov2007})(\label{nikiforov2007-th}
Let $G$ be a simple connected graph  other than a complete graph.
Then
\begin{equation}\label{nikiforov2007-1}
\lambda_{n-1}(G)\le n-\gamma(G).
\end{equation}
\end{theorem}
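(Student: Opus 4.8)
The plan is to sandwich $\lambda_{n-1}(G)$ between a classical spectral/connectivity bound on one side and a domination bound on the other, so that the theorem falls out as the two ends of a single chain. Concretely, I would establish
$$\lambda_{n-1}(G)\le \kappa(G)\le \kappa'(G)\le \delta(G)\le \Delta(G)\le n-\gamma(G),$$
where $\kappa,\kappa'$ denote the vertex and edge connectivity and $\delta,\Delta$ the minimum and maximum degree of $G$. The asserted inequality $\lambda_{n-1}(G)\le n-\gamma(G)$ is then simply the comparison of the first and last terms, and in fact the sharper $\lambda_{n-1}(G)\le\delta(G)$ already follows from the left half of the chain.

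The left half is not new: $\lambda_{n-1}(G)\le\kappa(G)$ is exactly Fiedler's estimate~(\ref{fiedler1973-3}), and this is precisely where the hypothesis ``other than a complete graph'' is used, since for $K_n$ one has $\lambda_{n-1}=n>n-1$. The steps $\kappa(G)\le\kappa'(G)\le\delta(G)\le\Delta(G)$ are the elementary connectivity/degree relations (for instance, deleting all edges at a vertex of minimum degree disconnects $G$, giving $\kappa'\le\delta$). The only genuinely new ingredient is the final inequality $\Delta(G)\le n-\gamma(G)$, equivalently $\gamma(G)\le n-\Delta(G)$. To prove it, I would choose a vertex $v$ with $d(v)=\Delta(G)$ and take $D=V(G)\setminus N(v)$, where $N(v)$ is the open neighbourhood; then $|D|=n-\Delta(G)$ and $D$ dominates $G$, because every vertex of $N(v)$ is adjacent to $v\in D$ while every remaining vertex already lies in $D$. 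Hence $\gamma(G)\le|D|=n-\Delta(G)$, which rearranges to $\Delta(G)\le n-\gamma(G)$ and closes the chain.

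An equivalent route avoids connectivity altogether: by the complement relation~(\ref{graph-com-2}) one has $\lambda_{n-1}(G)=n-\lambda_1(G^c)$, so the claim is $\lambda_1(G^c)\ge\gamma(G)$; applying the lower bound $\lambda_1\ge\Delta+1$ of Theorem~\ref{grone1994-th} to $G^c$ gives $\lambda_1(G^c)\ge\Delta(G^c)+1=n-\delta(G)$, and the same estimate $\delta(G)\le n-\gamma(G)$ completes the argument. Either way the proof is a short concatenation of known bounds, so I do not expect a serious obstacle; the only points that demand care are verifying that the non-complete hypothesis is truly needed for Fiedler's inequality and recognizing that it is the bound $\gamma\le n-\Delta$ on the \emph{maximum} degree, rather than any direct estimate involving $\delta$, that correctly matches the top of the chain.
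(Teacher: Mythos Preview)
Your argument is correct. The survey itself does not supply a proof of this theorem; it merely quotes the inequality from Nikiforov's paper, so there is no ``paper's own proof'' to compare against. Your first route---Fiedler's bound $\lambda_{n-1}(G)\le\kappa(G)$ for non-complete $G$, followed by $\kappa\le\delta\le\Delta\le n-\gamma$ via the dominating set $D=V\setminus N(v)$ for a maximum-degree vertex $v$---is a clean and valid derivation.

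One small caveat on your alternative route through the complement: Theorem~\ref{grone1994-th} as stated in the survey assumes $G$ is \emph{connected}, but $G^c$ need not be connected when $G$ is. The inequality $\lambda_1(H)\ge\Delta(H)+1$ in fact holds for any graph $H$ with at least one edge (the star at a maximum-degree vertex is a subgraph, and adding edges does not decrease Laplacian eigenvalues), so the gap is only in the citation, not in the mathematics. Since $G$ is non-complete, $G^c$ does have an edge, and the argument goes through. Your first route avoids this nicety entirely and is the cleaner of the two.
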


We notice that (\ref{lu2005-2}) and (\ref{nikiforov2007-1}) are
not comparable.  Another important graph parameter is diameter.
There are several results on the upper and lower bounds for the
second smallest Laplacian eigenvalue in terms of diameter of $G$.
The reader may refer to \cite{abreu2007}. In here, we only present
an up-to-date result by Lu et al. \cite{lu2007}.
\begin{theorem}(\cite{lu2007})\label{lu2007-th}
Let $G$ be a  simple connected graph of order $n$ with $m$ edges
and diameter $diam(G)$. Then
\begin{equation}\label{lu2007-1}
\lambda_{n-1}(G)\ge \frac{2n}{2+n(n-1)(diam(G))-2m( diam(G))}
\end{equation}
with equality if and only if $G$ is a path of order $3$ or a
complete graph.
\end{theorem}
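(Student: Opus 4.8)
The plan is to bound $\lambda_{n-1}(G)$ from below through its Rayleigh-quotient description together with a telescoping estimate along shortest paths. Let $x=(x_1,\dots,x_n)^T$ be a unit eigenvector for $\lambda_{n-1}(G)$; then $x$ is orthogonal to the all-ones vector $\mathbf{1}$, so $\sum_i x_i=0$, and by the quadratic form for $L(G)$ given in the Introduction, $x^TL(G)x=\sum_{(v_i,v_j)\in E(G)}(x_i-x_j)^2=\lambda_{n-1}(G)$. The first step is the algebraic identity
$$\sum_{i<j}(x_i-x_j)^2=n\sum_i x_i^2-\left(\sum_i x_i\right)^2=n,$$
which holds because $\sum_i x_i=0$ and $\|x\|=1$. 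Splitting the left-hand sum according to whether a pair is an edge of $G$ gives
$$n=\lambda_{n-1}(G)+\sum_{\{i,j\}\notin E(G)}(x_i-x_j)^2.$$

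For the second step I would estimate each non-edge term. For a non-adjacent pair $\{i,j\}$, pick a shortest path $v_i=w_0,w_1,\dots,w_\ell=v_j$ with $\ell\le diam(G)$, write $x_i-x_j=\sum_{t=1}^{\ell}(x_{w_{t-1}}-x_{w_t})$, and apply Cauchy--Schwarz:
$$(x_i-x_j)^2\le \ell\sum_{t=1}^{\ell}(x_{w_{t-1}}-x_{w_t})^2\le diam(G)\sum_{(v_a,v_b)\in E(G)}(x_a-x_b)^2=diam(G)\,\lambda_{n-1}(G),$$
where the second inequality uses $\ell\le diam(G)$ together with the fact that the path edges form a subset of $E(G)$ and every summand is nonnegative. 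Summing over the $\frac{n(n-1)}{2}-m$ non-adjacent pairs yields $n\le \lambda_{n-1}(G)\left(1+(\frac{n(n-1)}{2}-m)\,diam(G)\right)$, and rearranging gives exactly $\lambda_{n-1}(G)\ge \frac{2n}{2+(n(n-1)-2m)\,diam(G)}$.

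For the equality statement, the two extremal graphs are checked directly: $K_n$ has no non-adjacent pair, so the estimate is vacuous and $\lambda_{n-1}=n$ equals the right-hand side, while for $P_3$ one has $\lambda_{n-1}=1$ and the right-hand side is also $1$. The substance is the converse, which I would read off from the equality conditions. Since each non-edge term is at most $diam(G)\,\lambda_{n-1}(G)$ and the sum of these terms attains $(\frac{n(n-1)}{2}-m)\,diam(G)\,\lambda_{n-1}(G)$, every non-adjacent pair $\{i,j\}$ must satisfy (a) $dist(v_i,v_j)=diam(G)$; (b) the Cauchy--Schwarz step is tight, so $x$ changes by a constant increment along each shortest $i$--$j$ path; and (c) every edge off such a path is flat, i.e. $x_a=x_b$.

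The hard part will be converting (a)--(c) into a classification. Condition (a) immediately forces $diam(G)\le 2$: if $diam(G)\ge 3$, two vertices two steps apart on a geodesic form a non-adjacent pair at distance $2<diam(G)$, contradicting (a). If $diam(G)=1$ then $G=K_n$. The remaining case $diam(G)=2$ is the crux. Here every non-edge has a length-two geodesic $v_i-w-v_j$, and (c) applied to one such geodesic forces the set $S=\{(v_a,v_b)\in E(G):x_a\ne x_b\}$ of \emph{active} edges to satisfy $|S|\le 2$; moreover $S\ne\emptyset$ since $x$ is nonconstant. Contracting the flat edges collapses $G$ to a connected graph on at most $|S|+1\le 3$ vertices with at most two edges, so the possibilities are tightly constrained. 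I would then argue that each contracted block must be a single vertex: otherwise two vertices sharing a block value produce a new non-adjacent pair whose required common neighbour would force a forbidden third $x$-value via (b), or would leave the pair without a common neighbour, violating $diam(G)=2$. Ruling out the degenerate blocks this way leaves exactly the path $v_i-w-v_j$, that is $G=P_3$, which together with the case $G=K_n$ completes the characterization. The main obstacle is precisely this last case analysis: marshalling (a)--(c) simultaneously over all non-adjacent pairs to exclude every non-complete graph other than $P_3$.
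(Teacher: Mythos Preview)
The survey does not supply its own proof of this theorem; it merely quotes the statement from Lu, Zhang and Tian \cite{lu2007}. So there is no in-paper argument to compare against.

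Your derivation of the inequality is correct and is precisely the standard route (and the one used in the cited source): the identity $\sum_{i<j}(x_i-x_j)^2=n$ for a unit Fiedler vector, the edge/non-edge split, and the Cauchy--Schwarz telescoping bound along geodesics combine exactly as you wrote to give the claimed lower bound.

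Your equality analysis has the right skeleton, but the final ``contracted block'' argument can be made sharper than the ``forbidden third $x$-value'' reasoning you sketched. In the $diam(G)=2$ case, once you fix one non-edge $\{v_i,v_j\}$ with geodesic $v_i\!-\!w\!-\!v_j$, condition~(c) forces the active-edge set to be exactly $S=\{\{v_i,w\},\{w,v_j\}\}$ (both edges are genuinely active because the common increment $(x_i-x_j)/2$ is nonzero), and hence $G-S$ has exactly three components $C_i\ni v_i$, $C_w\ni w$, $C_j\ni v_j$. Now any vertex $u\in C_i\setminus\{v_i\}$ has all its neighbours inside $C_i$ (the only inter-component edges are the two in $S$), so every path from $u$ to $v_j$ must traverse $v_i$, then $w$, then $v_j$, giving $dist(u,v_j)\ge 3$, which contradicts $diam(G)=2$. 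The symmetric argument disposes of $C_j$. For $C_w$, any $u\in C_w\setminus\{w\}$ is non-adjacent to $v_i$, so $\{u,v_i\}$ is a non-edge; but by the same global description of $S$ its geodesic must use two edges of $S$, and no edge of $S$ is incident with $u$, a contradiction. Thus $|C_i|=|C_w|=|C_j|=1$ and $G=P_3$. This replaces the somewhat vague block discussion in your last paragraph with a direct distance argument.
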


 For trees, we gave an upper bound for the second smallest
 Laplacian eigenvalue in terms of the independence number
 $\alpha(G)$.
 Zhang in \cite{zhang2004b} proved the following:
 \begin{theorem}(\cite{zhang2004b})\label{zhang2004b-th2}
 Let $T$ be a tree of order $n$  with
 the independence number $\alpha(T)$.
 If $T$ is not the star graph $K_{1,n-1}$ or $T_{n, n-2}$, then
 \begin{equation}\label{zhang2004b-2}
\lambda_{n-1}(T)\le \frac{3-\sqrt{5}}{2}
\end{equation}
with if and only if $T$ is $T_{n, \alpha(T)}$
   \end{theorem}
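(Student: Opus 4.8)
The plan is to bound $\lambda_{n-1}(T)$ from above by inserting a carefully chosen vector into the variational description of the algebraic connectivity, and then to recover the extremal trees from the equality conditions. Since $L(T)$ is positive semidefinite with $L(T)e=0$, where $e$ is the all-ones vector, the Courant--Fischer principle (\ref{courant-fischer-th}) specializes to $\lambda_{n-1}(T)=\min\{x^TL(T)x:\ \|x\|=1,\ x\perp e\}$, and throughout I will use the quadratic form $x^TL(T)x=\sum_{(i,j)\in E(T)}(x_i-x_j)^2$. Write $\tau=\frac{3-\sqrt5}{2}$, the smaller root of $x^2-3x+1=0$; it satisfies $(2-\tau)(1-\tau)=1$ and, crucially, $(1-\tau)^2=\tau$. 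These are exactly the relations produced by a \emph{pendant path of length two}, i.e.\ a leaf $w$ whose neighbour $u$ has degree $2$, which is why this particular constant appears.

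The core of the upper bound is a single test-vector computation. Suppose $T$ contains two vertex-disjoint pendant paths of length two, say leaves $w_1,w_2$ whose neighbours $u_1,u_2$ have degree $2$. Put $x_{w_1}=1,\ x_{u_1}=1-\tau,\ x_{w_2}=-1,\ x_{u_2}=-(1-\tau)$ and $x=0$ elsewhere; then $x\perp e$. Because $u_1,u_2$ have degree exactly $2$, the only edges carrying a gradient are the two pendant edges and the two edges joining $u_1,u_2$ to vertices where $x$ vanishes, so a direct evaluation gives $x^TL(T)x/x^Tx=\frac{\tau^2+\tau}{1+\tau}=\tau$, using $(1-\tau)^2=\tau$. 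Hence any tree carrying two disjoint pendant $P_2$'s already satisfies $\lambda_{n-1}(T)\le\tau$, and the optimal coordinate ratio $(1-\tau):1$ is forced. I would then check that $T_{n,\alpha}$ realizes equality: for $\alpha\le n-3$ it has at least two such pendant paths, both attached at the centre, so the vector above is a genuine eigenvector (the centre and all remaining vertices stay at $0$ by antisymmetry, and the eigenvalue equation at $u_i,w_i$ reduces to $\tau^2-3\tau+1=0$). An equitable-partition (quotient-matrix) computation then shows every other nonzero Laplacian eigenvalue of $T_{n,\alpha}$ exceeds $\tau$, so $\lambda_{n-1}(T_{n,\alpha})=\tau$, giving the equality instance.

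The main obstacle is the structural reduction, and this is where I expect the real work to lie: not every tree other than $K_{1,n-1}$ and $T_{n,n-2}$ literally contains two disjoint pendant $P_2$'s. Trees whose leaves all hang on a few high-degree hubs have leaf-supports of degree $\ge 3$, and for them the contribution of the extra hub-edges inflates the Rayleigh quotient, so the simple test vector is unavailable. I would therefore split by how the leaves attach. When at least two leaf-supports have degree $2$, the computation above applies verbatim. In the remaining concentrated cases one cannot work locally; instead I would pass to a global Fiedler-vector argument, invoking Fiedler's theory of the characteristic vertex/edge of a tree to locate the sign pattern of the eigenvector for $\lambda_{n-1}$, and comparing $T$ with $T_{n,\alpha}$ through an edge-shifting (Kelmans-type) transformation that preserves the independence number while provably not decreasing the algebraic connectivity. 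Proving monotonicity of $\lambda_{n-1}$ under these moves, and confirming that $K_{1,n-1}$ and $T_{n,n-2}$ are precisely the configurations in which too few disjoint pendant paths survive to push the value below $\tau$, is the delicate step.

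Finally, for the equality characterization I would argue from tightness. If $\lambda_{n-1}(T)=\tau$, the minimizing Fiedler vector must turn every inequality used above into an equality: the degree/Cauchy--Schwarz estimates force the active leaf-supports to have degree exactly $2$ and the coordinate ratios to equal $(1-\tau):1$, while the gradient-free edges force $x$ to vanish off the two extremal branches. Connectedness together with Fiedler's monotonicity of $x$ along paths from the characteristic vertex should then identify the common attachment point as a single centre to which all branches are joined, forcing $T$ to have the shape of $T_{n,\alpha(T)}$; the converse was established in the second paragraph. Combining the upper bound, the exact value for $T_{n,\alpha}$, and this rigidity argument yields the theorem.
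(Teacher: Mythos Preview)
The survey you are working from does not actually supply a proof of this theorem; it merely quotes the result from \cite{zhang2004b} and passes immediately to the corollary of Grone, Merris and Sunder. So there is no ``paper's own proof'' to compare your attempt against, and I can only comment on the internal coherence of your proposal.

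Your test-vector computation for a tree containing two vertex-disjoint pendant paths of length two is correct and is the natural mechanism behind the constant $\tau=(3-\sqrt5)/2$: the identity $(1-\tau)^2=\tau$ is exactly what makes that antisymmetric vector an eigenvector on $T_{n,m}$ for $m\le n-3$, and the Rayleigh-quotient calculation is clean. Likewise, your verification that $\tau$ is an eigenvalue of $T_{n,\alpha}$ is fine, although you still owe the check that no smaller positive eigenvalue appears; the quotient-matrix route you mention will do this.

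The genuine gap is everything after that. You correctly identify that many trees excluded by the hypothesis---for instance any double star $S_{p,q}$ with $p,q\ge 2$, or more generally any tree whose leaves all hang on hubs of degree $\ge 3$---have no pendant $P_2$ at all, so your test vector simply does not exist there. For these trees your plan degenerates into a sketch: ``Fiedler's theory of the characteristic vertex'' plus ``Kelmans-type edge shifts that do not decrease $\lambda_{n-1}$'' is a plausible-sounding programme, but you have not stated which shifts, nor proved the required monotonicity, nor shown that the process terminates at some $T_{n,m}$. That is the heart of the theorem, not a detail. Note also that the bound $\tau$ is independent of $\alpha(T)$, so insisting that your transformations ``preserve the independence number'' is an unnecessary constraint and suggests you have not fully untangled the role $\alpha$ plays (it enters only in naming the extremal tree, since $\alpha(T_{n,m})=m$).

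Finally, your equality analysis is muddled: you invoke ``degree/Cauchy--Schwarz estimates'' turning into equalities, but your upper-bound argument used a single explicit test vector, not Cauchy--Schwarz. Tightness of a Rayleigh-quotient bound means the test vector is an eigenvector; from there you must argue directly from the eigenvector equation at each vertex, not from inequalities you never used.
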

By a simple calculation,  we  have following corollary due to
Grone et al. \cite{grone1990b}
\begin{corollary}(\cite{grone1990b})
\label{grone1990b-co} Let $T$ be a tree of order $n\ge 6$  other
than the star graph $K_{1,n-1}$. Then $\lambda_{n-1}(T)<0.49.$
\end{corollary}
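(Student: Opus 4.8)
The plan is to reduce the corollary, by means of Theorem~\ref{zhang2004b-th2}, to a single explicit tree, and then to settle that tree by a direct spectral computation. First observe that $\frac{3-\sqrt5}{2}=0.381\ldots<0.49$. Hence Theorem~\ref{zhang2004b-th2} already gives $\lambda_{n-1}(T)\le\frac{3-\sqrt5}{2}<0.49$ for every tree $T$ of order $n$ that is neither the star $K_{1,n-1}$ nor $T_{n,n-2}$. Since the corollary explicitly excludes the star, the only remaining case is $T=T_{n,n-2}$, and the whole argument reduces to proving $\lambda_{n-1}(T_{n,n-2})<0.49$ for $n\ge 6$.

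For this I would exploit the symmetry of $T_{n,n-2}$. By the definition preceding Theorem~\ref{zhang2004b-th3}, $T_{n,n-2}$ is obtained from $K_{1,n-2}$ by attaching one pendent edge, so it consists of a center $c$ of degree $n-2$ joined to $n-3$ leaves and to one vertex $v_1$ of degree $2$, where $v_1$ carries a single pendent vertex $w$. The partition of $V$ into $\{c\}$, $\{v_1\}$, $\{w\}$ and the class of $n-3$ leaves is equitable, so the Laplacian spectrum splits into the eigenvalues of the associated divisor matrix
$$B=\left(\begin{array}{cccc} n-2 & -1 & 0 & -(n-3)\\ -1 & 2 & -1 & 0\\ 0 & -1 & 1 & 0\\ -1 & 0 & 0 & 1\end{array}\right),$$
together with the eigenvalue $1$ of multiplicity $n-4$ arising from vectors that sum to zero on the leaves and vanish elsewhere. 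Every row of $B$ sums to $0$, so $\det(\lambda I-B)$ factors as $\lambda\cdot Q(\lambda)$ with $Q(\lambda)=\lambda^3-(n+2)\lambda^2+(3n-2)\lambda-n$. Thus the full spectrum of $L(T_{n,n-2})$ is $\{0\}$, the three roots of $Q$, and $1$ with multiplicity $n-4$.

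It then remains only to locate the smallest positive root of $Q$. Since $3n-2>0$, each term of $Q(\lambda)$ is negative for $\lambda\le 0$, so $Q$ has no nonpositive root and all its roots are positive. Moreover $Q(0)=-n<0$ while $Q(1)=n-3>0$ for $n\ge 6$, so the smallest root lies in $(0,1)$; as it is smaller than the eigenvalue $1$ and than the larger roots of $Q$, it is precisely the algebraic connectivity $\lambda_{n-1}$. Finally I evaluate $Q(0.49)=0.2299\,n-1.3425\ldots$, which is positive exactly when $n\ge 6$. Combined with $Q(0)<0$, this places the smallest root in $(0,0.49)$, giving $\lambda_{n-1}(T_{n,n-2})<0.49$ and completing the proof.

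The expansion of $\det(\lambda I-B)$ and the arithmetic at $\lambda=0.49$ are routine; the genuinely delicate point is that the constant $0.49$ is essentially tight, since the sign of $Q(0.49)$ flips precisely at $n=6$ (for $n=5$ one has $Q(0.49)<0$). This is why the hypothesis $n\ge 6$ cannot be relaxed, and it is also why one must take care to verify both that the root in question is the \emph{smallest} and that it, rather than the eigenvalue $1$ or a larger root of $Q$, is the algebraic connectivity.
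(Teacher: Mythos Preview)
Your proof is correct and follows exactly the route the paper intends: the text derives the corollary from Theorem~\ref{zhang2004b-th2} ``by a simple calculation'', meaning precisely that the bound $(3-\sqrt5)/2<0.49$ covers every non-star tree except $T_{n,n-2}$, and the latter is handled by direct computation of its Laplacian spectrum. Your equitable-partition reduction to the cubic $Q(\lambda)=\lambda^3-(n+2)\lambda^2+(3n-2)\lambda-n$ and the sign check $Q(0.49)=0.2299\,n-1.3426>0$ for $n\ge 6$ are both accurate, and your observation that the hypothesis $n\ge 6$ is sharp (since $Q(0.49)<0$ at $n=5$) is a nice addition.
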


Merris in \cite{Merris1997} introduced the {\it doubly stochastic
matrix} of a graph which is defined to be
$\Omega(G)=(\omega_{ij})=(I+L(G))^{-1}$. Denote by
$\omega(G)=\min\{\omega_{ij}\ | \  1\le i,j\le n\}$. In the study
of relations between smallest entry of this doubly stochastic
matrix and the algebraic connectivity.  In 1998, Merris
\cite{Merris1998b} proposed the following two conjectures.

\begin{conjecture}(\cite{Merris1998b})\label{merris1998b-con1}
 Let $G$ be a graph on $n$ vertices. Then
\begin{equation}\label{merris1998b-1}
\lambda_{n-1}(G)\ge 2(n+1)\omega(G).
\end{equation}
\end{conjecture}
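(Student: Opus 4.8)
The plan is to transfer the whole question to the doubly stochastic matrix $\Omega=\Omega(G)=(I+L(G))^{-1}$ and read everything off its spectrum. If $u_1,\dots,u_n$ is an orthonormal basis of Laplacian eigenvectors with $u_n=\mathbf 1/\sqrt n$, then $\Omega$ has the same eigenvectors and eigenvalues $\mu_i=1/(1+\lambda_i)$. Thus $\Omega$ is symmetric, positive definite and doubly stochastic, its largest eigenvalue is $\mu_n=1$ on $\mathbf 1$, and its second largest is $\mu_{n-1}=1/(1+\lambda_{n-1})$. Since $t\mapsto 1/(1+t)$ is decreasing, the conjectured bound is \emph{equivalent} to the spectral statement
$$\mu_{n-1}=\frac{1}{1+\lambda_{n-1}}\ \le\ \frac{1}{1+2(n+1)\,\omega(G)}.$$
We may assume $G$ is connected (otherwise $\lambda_{n-1}=0$ and, $\Omega$ being block diagonal, $\omega(G)=0$, so the bound is trivial). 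So the task is to bound the second largest eigenvalue of $\Omega$ from above by its smallest entry $\omega:=\omega(G)$; note $\omega\le 1/n$ since the $n^2$ entries of $\Omega$ sum to $\mathbf 1^{T}\Omega\mathbf 1=n$.

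First I would extract the easy estimate from nonnegative matrix theory. Every entry of $\Omega$ is at least $\omega$, so $B:=\Omega-\omega J$ ($J$ the all-ones matrix) is entrywise nonnegative, symmetric, with constant row sum $r=1-n\omega\ge 0$. On $\mathbf 1^{\perp}$ the term $\omega J$ vanishes, so $\Omega$ and $B$ agree there, and a direct rearrangement gives, for every unit $x\perp\mathbf 1$,
$$x^{T}\Omega x=x^{T}Bx=r-\sum_{j<k}B_{jk}(x_j-x_k)^2.$$
Consequently $\mu_{n-1}=r-\alpha(B)$, where $\alpha(B)$ is the algebraic connectivity of the weighted graph with edge weights $B_{jk}=\Omega_{jk}-\omega$. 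Discarding the nonnegative correction $\alpha(B)$ yields the Perron bound $\mu_{n-1}\le r=1-n\omega$, i.e. $\lambda_{n-1}\ge n\omega/(1-n\omega)$.

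The hard part is that this estimate is off by essentially a factor of two: for small $\omega$ it gives $\lambda_{n-1}\gtrsim n\omega$, whereas the conjecture demands $\lambda_{n-1}\ge 2(n+1)\omega\gtrsim 2n\omega$. Closing the gap amounts to bounding $\alpha(B)$ from below by $r-\tfrac{1}{1+2(n+1)\omega}$, and this is where the genuine difficulty lies. I would attack $\alpha(B)$ by combining the global constraints $Tr(\Omega)=\sum_i\mu_i$ and $\mathbf 1^{T}\Omega\mathbf 1=n$ (which control how much off-diagonal mass sits strictly above the floor $\omega$, hence the total weight in $B$) with the local row identity obtained from $(I+L)\Omega=I$: at a vertex $p$ realizing $\omega=\Omega_{pq}$ one has $(1+d_p)\Omega_{pq}=\sum_{k\sim p}\Omega_{kq}$, which pins the entries around the extremal pair and should feed a lower bound on the weights $B_{jk}$ incident to $p$ and $q$. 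The positive definiteness of $\Omega$ (equivalently $\alpha(B)<r$ strictly) is the extra input the crude Perron argument throws away, and it is precisely what must supply the missing factor of two.

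As a guide and sanity check, the extremal configurations $G=K_2$ and $G=P_3$ both meet the bound with equality ($\omega=\tfrac13,\tfrac18$ against $\lambda_{n-1}=2,1$), and in each case $B$ reduces to a scaled path Laplacian whose connectivity is forced to its extreme value; these should be the only equality cases, which an eventual proof must reproduce. Before committing to the delicate estimate of $\alpha(B)$, I would stress-test the inequality on the near-disconnected family of two cliques joined by a single edge, where $\lambda_{n-1}$ and $\omega$ both degenerate to $0$: if the predicted ratio survives this limit then the factor-two refinement above is the sole remaining obstacle, and if it fails the same computation exhibits a counterexample. Either way, the main obstacle is the sharp lower bound on the algebraic connectivity $\alpha(B)$ of the thresholded matrix $\Omega-\omega J$.
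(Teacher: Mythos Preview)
There is a fundamental obstacle: the statement you are trying to prove is a \emph{conjecture}, not a theorem, and the paper records immediately after stating it that Zhang and Wu (2005) \emph{disproved} it. They obtained sharp upper and lower bounds for the smallest entry $\omega(G)$ of the doubly stochastic matrix $(I+L(G))^{-1}$ for trees, and from those bounds produced trees violating $\lambda_{n-1}(G)\ge 2(n+1)\omega(G)$. So no argument along the lines you sketch can be completed, and the ``missing factor of two'' you isolate is not a gap in the technique but a gap in the truth of the inequality.

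Your own instincts point in exactly the right direction. You correctly derive the easy Perron-type bound $\lambda_{n-1}\ge n\omega/(1-n\omega)$, correctly observe that it falls short of the conjectured $2(n+1)\omega$ by roughly a factor of two, and then propose to stress-test on near-disconnected graphs before attempting the delicate lower bound on $\alpha(B)$. That stress-test is precisely what you should carry out: on long paths, or more generally on trees with large diameter, $\lambda_{n-1}$ decays like $1/n^2$ while $\omega$ can be shown (this is the content of the Zhang--Wu bounds) to decay more slowly, so the ratio $\lambda_{n-1}/\omega$ drops below $2(n+1)$ for $n$ large enough. In other words, the refinement of $\alpha(B)$ that you describe as ``the sole remaining obstacle'' is in fact impossible, and your proposed sanity check would have revealed this. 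The paper accordingly replaces the disproved conjecture with an open question asking for the correct sharp lower bound on $\lambda_{n-1}$ in terms of $n$ and $\omega(G)$.
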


\begin{conjecture}(\cite{Merris1998b})
\label{merris1998b-con2} Let $E_n$ be the degree anti-regular
graph, that is, the unique connected graph whose vertex degrees
attain all values between 1 and $n-1$. Then
\begin{equation}\label{meris1998b-2}
\omega(E_n)=\frac{1}{2(n+1)}. \end{equation}
\end{conjecture}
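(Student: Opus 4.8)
The plan is to work with an explicit realisation of $E_n$ and to compute the relevant entries of the doubly stochastic matrix $\Omega=\Omega(E_n)=(I+L(E_n))^{-1}$ directly, rather than through its spectrum. Concretely, realise $E_n$ on the vertex set $\{1,2,\dots,n\}$ by declaring $i\sim j$ (for $i\ne j$) exactly when $i+j\ge n+1$. One checks that the degree of vertex $i$ equals $d_i=i-[\,2i\ge n+1\,]$ (where $[P]=1$ if $P$ holds and $0$ otherwise), so the degrees run through every value in $\{1,\dots,n-1\}$, with $\lfloor n/2\rfloor$ occurring twice, and that vertex $n$ is adjacent to all others; hence this graph is connected and antiregular, so by the stated uniqueness it is $E_n$. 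Here vertex $1$ is the pendant vertex and $n$ the dominating vertex. Since $L(E_n)$ is positive semidefinite with $L\mathbf 1=0$, the matrix $I+L$ is a symmetric, irreducible, nonsingular $M$-matrix, so $\Omega$ is entrywise positive, symmetric and doubly stochastic; thus $\omega(E_n)$ is its smallest (positive) entry, and it suffices to locate that entry.

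The engine is a reduction of the system $(I+L)x=b$ to a trivial form. Writing $S_m=\sum_{l\ge m}x_l$ for suffix sums and using that the neighbourhood of $i$ is the interval $\{j\ge n+1-i\}\setminus\{i\}$, the $i$-th equation reads $(1+d_i)x_i-\bigl(S_{n+1-i}-[\,2i\ge n+1\,]x_i\bigr)=b_i$. Because $1+d_i+[\,2i\ge n+1\,]=i+1$, this collapses to
\[ (i+1)\,x_i-S_{n+1-i}=b_i,\qquad i=1,\dots,n. \]
I would first use this to show that the value $\tfrac{1}{2(n+1)}$ is attained. Fix a column $j$ with $2\le j\le n-1$ (so $n\ge 3$) and set $x=\Omega e_j$. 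The equation at $i=n$ is $(n+1)x_n-S_1=0$; since $S_1=\sum_l x_l=1$ (column sum), this gives $x_n=\tfrac{1}{n+1}$. The equation at $i=1$ is $2x_1-S_n=0$, i.e.\ $2x_1=x_n$, whence $\Omega_{1j}=x_1=\tfrac{1}{2(n+1)}$. Thus every $\Omega_{1j}$ with $2\le j\le n-1$, and by symmetry every $\Omega_{j1}$, equals $\tfrac{1}{2(n+1)}$, so $\omega(E_n)\le\tfrac{1}{2(n+1)}$. (As a consistency check one verifies that the full pendant column is $x_1=(n+2)s,\ x_i=s\ (2\le i\le n-1),\ x_n=2s$ with $s=\tfrac{1}{2(n+1)}$, by substituting into the three types of reduced equations.)

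It remains to prove the matching lower bound $\Omega_{ij}\ge\tfrac{1}{2(n+1)}$ for all $i,j$, which is the crux. From the reduced system, $\Omega_{ij}=x_i=\dfrac{\delta_{ij}+P_i}{i+1}$, where $P_i:=S_{n+1-i}=\sum_{l\ge n+1-i}\Omega_{lj}$ is the mass of column $j$ on the $i$ highest-indexed vertices. The cases $i=j$ (then $\Omega_{jj}\ge\tfrac{1}{j+1}\ge\tfrac{1}{n+1}$) and the rows/columns of vertices $1$ and $n$ are settled by the computation above, since $\Omega_{nj}=\tfrac{1+\delta_{nj}}{n+1}\ge\tfrac{1}{n+1}$ and $\Omega_{1j}=\tfrac{1+\delta_{nj}}{2(n+1)}\ge s$. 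For the remaining interior entries the desired bound is precisely the suffix-sum inequality
\[ P_i=\sum_{l\ge n+1-i}\Omega_{lj}\ \ge\ \frac{i+1}{2(n+1)}\qquad(1\le i\le n-1). \]
I would establish this from the recursion the reduced system forces on the suffix sums, namely $P_i=P_{i-1}+\dfrac{\delta_{n+1-i,j}+P_{n+1-i}}{n+2-i}$, which couples the index $i$ with its reflection $n+1-i$. Solving this coupled two-term recursion pair by pair (indices $i$ and $n+1-i$) gives each $P_i$ in closed form and shows the inequality holds, with equality exactly when $j=1$, thereby identifying the minimisers as the entries $\Omega_{1l}=\Omega_{l1}$, $2\le l\le n-1$. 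The main obstacle is exactly this reflection coupling $i\leftrightarrow n+1-i$: it blocks a one-directional induction, so one must either solve the recursion explicitly or run a simultaneous pairwise induction from the outside in. Once the suffix-sum inequality is in hand, $\omega(E_n)=\tfrac{1}{2(n+1)}$ follows at once.
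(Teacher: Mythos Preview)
The survey does not itself prove this statement; it only records that Berman and Zhang confirmed the conjecture in \cite{berman2000}, without reproducing any argument. There is therefore no in-paper proof to compare your proposal against, and I assess it on its own merits.

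Your threshold realisation of $E_n$ via $i\sim j\Leftrightarrow i+j\ge n+1$ is correct, and the collapse of the system $(I+L)x=e_j$ to $(i+1)x_i=S_{n+1-i}+\delta_{ij}$ is valid and elegant. From it the upper bound $\omega(E_n)\le\tfrac{1}{2(n+1)}$ follows cleanly; indeed your explicit first column already exhibits the $n-2$ entries $\Omega_{k1}=\tfrac{1}{2(n+1)}$, $2\le k\le n-1$. The reduction of the lower bound to the suffix-sum inequality $P_i\ge\tfrac{i+1}{2(n+1)}$ is also right.

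The gap sits exactly where you place the crux. First, the pairing you announce does not close: to obtain the pair $(P_i,P_{n+1-i})$ your two recursions require the data $(P_{i-1},P_{n-i})$, whereas the preceding ``reflection pair'' would be $(P_{i-1},P_{n+2-i})$; these do not match. What does run is the interleaved order $P_0,P_n\to P_1\to P_{n-1}\to P_2\to P_{n-2}\to\cdots$, alternately advancing the low index (an addition step) and retreating the high index via $P_{n-k}=P_{n-k+1}-x_k$ (a subtraction step). But then, to push a \emph{lower} bound through the subtraction steps you must simultaneously control $x_k=\frac{b_k+P_k}{k+1}$ from above, i.e.\ carry a companion upper bound of the shape $P_k\le 1-(n-k)s$ alongside the lower one; a one-sided induction collapses. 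You neither set this two-sided scheme up nor produce the promised closed form. A small symptom that the endgame is not yet under control is the equality claim ``equality exactly when $j=1$'': already at $i=1$ one has $P_1=\tfrac{1}{n+1}=2s$ for every $j\ne n$, so equality in your suffix-sum inequality occurs much more broadly. The strategy is sound, but the decisive step is still outstanding.
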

In 2000, Berman and Zhang \cite{berman2000} confirmed Conjecture~
 \ref{merris1998b-con2}.  Recently, Zhang and Wu in
 \cite{zhang2005} firstly  obtained sharp upper and lower bounds for the smallest entries
  of doubly stochastic matrices of trees, which is used to
  disprove Conjecture~\ref{merris1998b-con1}. Hence we may propose
  the following question:
  \begin{question}\label{new-2}
  What is the best lower bound for the algebraic connectivity in
  terms of the vertex number and the smallest entry of the doubly
  stochastic matrix of a graph?
  \end{question}

\section{The sum of the Laplacian eigenvalues }
  Before presenting some results, we need to recall some
  notations.
  If $(a)=(a_1,a_2,\cdots,a_r)$ and $(b)=(b_1,b_2,\cdots, b_s)$
  are nonincreasing sequences of real  number, then $(a)$ {\it
  majorizes } $ (b)$, denoted by $(a)\succeq (b)$, if
  $$\sum_{i=1}^ka_i\ge \sum_{i=1}^kb_i, \ \ {\rm for} \ k=1,2,\cdots,
  \min\{ r, s\}
  $$ and
  $$\sum_{i=1}^ra_i=\sum_{i=1}^sb_i.$$
Moreover, if $(a)$ is a integer nonincreasing sequence, denote by
$(a)^*=(a_1^*,a_2^*,\cdots, a_t^*)$ the {\it conjugate} sequence
of $(a)$, where $a_i$ is the cardinality of the set $\{j\ | \
a_j\ge i\}$.

Since $L(G)$ is  positive semidefinite, it follows from Schur's
theorem (see \cite{marshal1979}) that   the Laplacian eigenvalues
of a graph majorizes the degree sequence (when  both are arranged
in nonincreasing order). It is not surprising that such a result
should be, to some extent, improved upon restriction to the class
of the Laplacian matrices.   Grone and Merris  in \cite{grone1994}
proposed the following two conjectures on the Laplacian
eigenvalues.

\begin{conjecture}(\cite{grone1994})\label{grone1994-con}
Let $G$ be a connected graph of order $n\ge 2$ with nonincreasing
degree sequence $(d_1,d_2,\cdots, d_n)$. Then
\begin{equation}\label{grone1994-1}
(\lambda_1(G), \lambda_2(G),\cdots, \lambda_{n-1}(G))\succeq
(d_1+1,d_2,\cdots, d_n-1). \end{equation}
\end{conjecture}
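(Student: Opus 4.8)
The plan is to unwind the majorization (\ref{grone1994-1}) into a family of scalar partial-sum inequalities and to attack those with the variational (Ky Fan) description of sums of eigenvalues. First I would record that the right-hand sequence $(d_1+1,d_2,\dots,d_{n-1},d_n-1)$ is already nonincreasing, since $d_1+1\ge d_2$ and $d_{n-1}\ge d_n-1$, and that its total equals $\sum_{i=1}^n d_i=2m$, which coincides with $\sum_{i=1}^{n-1}\lambda_i$ because $\lambda_n=0$; this settles the trailing equality demanded by the definition of $\succeq$. Because the modified entry $d_n-1$ sits in position $n$, the $k$-th partial sum of the right-hand side is $1+\sum_{i=1}^k d_i$ for every $1\le k\le n-1$, so (\ref{grone1994-1}) is equivalent to
\begin{equation}
\sum_{i=1}^k \lambda_i(G)\ \ge\ 1+\sum_{i=1}^k d_i,\qquad 1\le k\le n-1.
\end{equation}
The two endpoints are already in hand: $k=1$ is exactly the Grone--Merris lower bound $\lambda_1\ge\Delta+1$ of Theorem~\ref{grone1994-th}, while $k=n-1$ reduces, using the trace, to $d_n\ge 1$, which holds because $G$ is connected.

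For the interior cases I would invoke the Ky Fan maximum principle, $\sum_{i=1}^k\lambda_i(L(G))=\max\sum_{j=1}^k x_j^TL(G)x_j$ over orthonormal families $x_1,\dots,x_k$ (the natural extension of the Rayleigh--Ritz and Courant--Fischer formulas recalled in Section~2), together with the quadratic form $x^TL(G)x=\sum_{(u,v)\in E}(x_u-x_v)^2$. Choosing $x_1,\dots,x_k$ to be the standard basis vectors at the $k$ vertices $v_1,\dots,v_k$ of largest degree gives exactly $\sum_{i=1}^k d_i$; this is Schur's majorization inequality, already invoked in Section~7 (the eigenvalues of the positive semidefinite matrix $L(G)$ majorize its diagonal). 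The entire content of the statement is thus to manufacture \emph{one extra unit} of Rayleigh energy out of the connectivity of $G$. The natural device is to replace the coordinate subspace by a nearby one that incorporates a Laplacian ``star eigenvector'' at a chosen vertex: connectivity supplies an edge leaving $\{v_1,\dots,v_k\}$, and rotating the test vectors into that edge direction, or equivalently passing to the $(k+1)\times(k+1)$ principal submatrix on $\{v_1,\dots,v_k\}$ together with a neighbour and applying Cauchy interlacing, strictly raises the sum.

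The hard part will be upgrading ``strictly larger'' to the exact integer gain of $1$. A first-order perturbation along a single boundary edge $(a,b)$ produces only $\sqrt{1+\delta^2/4}-\delta/2$ of extra energy, where $\delta=d(a)-d(b)\ge 0$ is the degree drop across the edge; this equals $1$ precisely when the endpoints have equal degree and is strictly smaller otherwise, so no purely local move can ever suffice. The same obstruction reappears in the interlacing route, where one would need the least eigenvalue of the enlarged principal submatrix to be at most $d(b)-1$, again forcing an equal-degree boundary edge. Consequently the unit must be harvested globally, and a naive induction on $k$ is also blocked, since it would require $\lambda_k\ge d_k$, which fails in general (cf. Theorem~\ref{guo2007b-th}). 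The route I would therefore pursue is an induction that freezes the right-hand degree sums while simplifying $G$ (removing non-bridge edges, then reducing to the spanning-tree case), in which the star eigenvectors at the high-degree vertices can be assembled into a genuinely $k$-dimensional orthonormal test family whose energies sum with one full unit of slack coming from the single connected component. I expect the delicate point to be the bookkeeping of leaves shared between overlapping stars and of adjacencies among $v_1,\dots,v_k$, which is exactly where the local estimates lose the unit and where a careful structural argument is needed to recover it.
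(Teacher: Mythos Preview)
Your reduction of (\ref{grone1994-1}) to the family of inequalities $\sum_{i=1}^k\lambda_i\ge 1+\sum_{i=1}^k d_i$, $1\le k\le n-1$, is correct, as is the handling of the two endpoints, and the Ky Fan/coordinate-vector step is exactly what lies behind the partial result the survey records as Theorem~\ref{grone1994-th}. But the plan for the interior cases has a genuine gap. You correctly observe that a single boundary-edge perturbation or a one-step interlacing argument yields strictly less than one extra unit unless the edge is degree-balanced, and you then propose to ``freeze the right-hand degree sums while simplifying $G$ (removing non-bridge edges, then reducing to the spanning-tree case).'' This cannot work as stated: deleting an edge lowers two entries of the degree sequence, so the right-hand side is not frozen; and if instead you mean to keep the \emph{original} degrees on the right while passing to a spanning tree $T$ on the left, the resulting target $\sum_{i=1}^k\lambda_i(T)\ge 1+\sum_{i=1}^k d_i(G)$ is simply false in general (take $G=K_n$ and $T$ a spanning path: $\lambda_1(T)<4$ while $1+d_1(G)=n$). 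There is no monotonicity that transports the tree case back to $G$ against the original degree sums, so the induction you sketch does not close, and your proposal stops short of a proof at precisely the point you yourself flag as ``the hard part.''

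For comparison, the survey does not supply its own proof of the full statement either: it quotes the partial Theorem~\ref{grone1994-th} from \cite{grone1994} and then reports that Grone~\cite{grone1995} confirmed Conjecture~\ref{grone1994-con} ``using $M$-matrix theory and graph structure.'' That route is different in kind from your variational one, exploiting the sign pattern and inverse-positivity properties of $L(G)$ as a singular $M$-matrix rather than building test subspaces; it is that global structural input which supplies the missing unit that your local Rayleigh-quotient estimates cannot manufacture.
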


\begin{conjecture}(\cite{grone1994})\label{grone1994-con2}
Let $G$ be a connected graph of order $n\ge 2$ with nonincreasing
degree sequence $(d_1,d_2,\cdots, d_n)$. Then
\begin{equation}\label{grone1994-2}
(\lambda_1(G), \lambda_2(G),\cdots, \lambda_{n-1}(G))\preceq
(d_1^*,d_2^*,\cdots, d_n^*). \end{equation}
\end{conjecture}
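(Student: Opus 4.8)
The statement to be proved is the second Grone--Merris conjecture, and the natural first move is to unwind the majorization into partial-sum inequalities. Writing $T_k(G)=\sum_{i=1}^{k}\lambda_i(G)$ and $S_k(G)=\sum_{i=1}^{k}d_i^{*}$, the relation (\ref{grone1994-2}) is equivalent to $T_k(G)\le S_k(G)$ for every $k$, together with equality at the end. The terminal equality comes for free: since $\lambda_n=0$ we have $\sum_{i=1}^{n-1}\lambda_i=\mathrm{tr}\,L(G)=\sum_v d(v)=2|E(G)|$, while a conjugate partition has the same weight as the original, $\sum_i d_i^{*}=\sum_v d(v)$. It is also convenient to record the identity $S_k(G)=\sum_{v\in V(G)}\min\{d(v),k\}$, which exhibits the right-hand side as a purely degree-theoretic quantity in which each vertex's degree is truncated at $k$.

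For the partial sums I would represent $T_k$ by Ky Fan's extremal principle (the summed form of the Courant--Fischer theorem (\ref{courant-fischer-th})): $T_k(G)=\max\{\mathrm{tr}(P\,L(G)):P=P^{T}=P^{2},\ \mathrm{rank}\,P=k\}$, the maximum being over rank-$k$ orthogonal projections. Decomposing the Laplacian edge by edge as $L(G)=\sum_{(u,v)\in E(G)}b_{uv}b_{uv}^{T}$ with $b_{uv}=\mathbf{e}_u-\mathbf{e}_v$ turns the objective into $\mathrm{tr}(PL)=\sum_{(u,v)\in E(G)}\|Pb_{uv}\|^{2}$. Setting $p_v=\|P\mathbf{e}_v\|^{2}\in[0,1]$ we get the global constraint $\sum_v p_v=\mathrm{tr}\,P=k$, and each edge term is controlled by $\|Pb_{uv}\|^{2}\le 2$ and by $\|Pb_{uv}\|^{2}\le(\sqrt{p_u}+\sqrt{p_v})^{2}$. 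The heuristic behind the conjecture now becomes visible: a rank-$k$ projection can ``fully see'' only $k$ coordinates, so a vertex of degree exceeding $k$ cannot have all of its incident edges captured, which is exactly what the truncation $\min\{d(v),k\}$ encodes.

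The main obstacle is that these two pointwise bounds are too lossy to assemble into $\sum_v\min\{d(v),k\}$: bounding every edge by $2$ only yields $T_k\le 2|E(G)|$, and the square-root bound overcounts because the cross terms $\langle P\mathbf{e}_u,P\mathbf{e}_v\rangle$ can have either sign. Extracting the sharp bound requires exploiting the combinatorial structure globally rather than edge by edge, and the correct organizing principle is that the extremizers are the \emph{threshold graphs}. These are Laplacian integral with $\lambda_i=d_i^{*}$, so they attain equality throughout (\ref{grone1994-2}); the content of the conjecture is that they maximize $T_k$ among all graphs with a prescribed degree sequence (note $S_k$ depends only on that sequence). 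I would therefore aim for an induction in which a ``shifting'' operation transports an arbitrary $G$ toward a threshold graph while not decreasing $T_k-S_k$, reducing the problem to the already-verified threshold case. Making such a monotone shift rigorous, and showing that it never breaks the eigenvalue inequality, is the crux of the difficulty and is precisely the point at which this conjecture resisted proof for many years.
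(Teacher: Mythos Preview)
The statement you are attempting to prove is not a theorem in the paper but a \emph{conjecture} (Conjecture~\ref{grone1994-con2}), and the paper contains no proof of it. The surrounding discussion is explicit on this point: Grone and Merris established only the extreme inequalities $\lambda_1\le d_1^{*}$ and $\lambda_{n-1}\ge d_{n-1}^{*}$; Duval and Reiner (Theorem~\ref{duval2002-th}) settled the case $k=2$; and the author states that the conjecture holds for threshold graphs, for regular and nearly regular graphs, and for trees, but that ``up to now, this Conjecture has still not been proved or disproved.'' There is therefore no paper proof against which your proposal can be compared.

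As for the proposal itself, it is not a proof and you candidly say so in your final sentence. The preliminary observations are correct and standard: the trace identity gives the terminal equality, the formula $S_k=\sum_v\min\{d(v),k\}$ is the right interpretation of the conjugate partition, and the Ky Fan maximum principle together with the edgewise decomposition $L=\sum b_{uv}b_{uv}^{T}$ is a natural starting point. You also correctly diagnose why the naive edge-by-edge bounds fail. But the proposed strategy---a monotone shifting toward threshold graphs---is precisely the missing ingredient, and you offer no mechanism for carrying it out; this is a genuine gap, not a routine detail. (For context outside the paper: the conjecture was subsequently proved by H.~Bai, \emph{Trans.\ Amer.\ Math.\ Soc.}\ \textbf{363} (2011), 4463--4474, by an argument that does exploit the Ky Fan principle but combines it with a careful induction and a block-matrix eigenvalue estimate rather than a combinatorial shift.)
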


On Conjecture \ref{grone1994-con}, Grone and Merris in
\cite{grone1994} showed the part result on this conjecture.

\begin{theorem}(\cite{grone1994})\label{grone1994-th}
Let $G=(V,E)$ be a  connected  graph of order $n>2$. If the
induced subgraph by subset $U$ of $V$ with $|U|=k$ contains $r$
pair disjoint edges, then
\begin{equation}\label{grone1994}
\sum_{i=1}^k\lambda_i(G)\ge\sum_{u\in U}d(u)+k-r
\end{equation}
\end{theorem}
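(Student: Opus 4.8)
The natural tool is the sum version of the Courant--Fischer principle (\ref{courant-fischer-th}), namely Ky Fan's maximum principle: for a real symmetric $L$,
$$\sum_{i=1}^k\lambda_i(L)=\max\Big\{\mathrm{tr}(X^TLX)\ \big|\ X\in\mathbf{R}^{n\times k},\ X^TX=I_k\Big\}.$$
Thus it suffices to exhibit a single $k$-dimensional subspace $W$ of $\mathbf{R}^n$ on which the quadratic form is large; writing $P_W$ for the orthogonal projection onto $W$, any orthonormal basis $X$ of $W$ satisfies $\mathrm{tr}(X^TLX)=\mathrm{tr}(P_WLP_W)$, so I only need $\mathrm{tr}(P_WLP_W)\ge\sum_{u\in U}d(u)+k-r$. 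Throughout I use the identity $x^TL(G)x=\sum_{(v_i,v_j)\in E}(x_i-x_j)^2$.

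The plan is to read the target as $\sum_{u\in U}(d(u)+1)-r$ and to build $W$ from the matching. Let $M=\{\{a_1,b_1\},\dots,\{a_r,b_r\}\}$ be a maximum matching of the induced graph $G[U]$; then the $k-2r$ vertices of $U$ left uncovered form an independent set, and by maximality every neighbour of an uncovered vertex is covered by $M$. I would attach to each $u\in U$ a ``star'' test vector supported on $\{u\}\cup N(u)$, modelled on the eigenvector realising $\lambda_1=d(u)+1$ for the star $K_{1,d(u)}$ that underlies (\ref{grone1994-11}); for a single uncovered vertex such a vector already contributes $d(u)+1$. For a matched pair $\{a_i,b_i\}$ I would instead use two orthonormal vectors inside $\mathrm{span}(\{e_{a_i},e_{b_i}\}\cup N(a_i)\cup N(b_i))$ --- for instance the antisymmetric combination $\frac{1}{\sqrt2}(e_{a_i}-e_{b_i})$, whose quadratic form equals $\frac12(d(a_i)+d(b_i))+1$, together with one star-type vector --- so that the pair contributes $d(a_i)+d(b_i)+1$ rather than the naive $d(a_i)+d(b_i)+2$. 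Summing, the $k$ vectors account for $\sum_{u\in U}d(u)+(k-2r)+r=\sum_{u\in U}d(u)+k-r$, exactly the required total; the deficit $r$ is precisely the price paid because the two endpoints of a matching edge must share that edge.

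The main obstacle is orthogonality and cross-terms. The star vectors attached to different $U$-vertices need not be orthogonal: two vertices of $U$ may have common neighbours, and vertices of $U$ may be joined by edges of $G[U]$ lying outside $M$. Hence $W$ could collapse to dimension less than $k$, or the off-diagonal entries of the resulting Gram/Rayleigh matrix could erode the per-vertex gains, so the clean per-vertex accounting above is only a heuristic. The real work is to choose the supports and the coefficients so that the family is genuinely orthonormal while preserving the extra $+1$ at each uncovered vertex and each matched edge --- equivalently, to show that after projecting out the overlaps the trace $\mathrm{tr}(P_WLP_W)$ still dominates $\sum_{u\in U}d(u)+k-r$. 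I expect this step to require the Gallai-type structure of a maximum matching (uncovered vertices independent, all their neighbours covered) in order to route each shared edge to a single ``owner,'' followed by a careful check that the induced corrections are nonnegative; this verification, rather than the choice of $W$ itself, is where the difficulty concentrates.
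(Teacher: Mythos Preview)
The survey does not supply a proof of this theorem; it merely quotes the statement from \cite{grone1994} and then remarks that the full Conjecture~\ref{grone1994-con} was subsequently confirmed by Grone in \cite{grone1995} ``using $M$-matrix theory and graph structure.''  There is therefore no argument in the present paper against which to compare your attempt.

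Judged on its own, your proposal is a plan rather than a proof, and you say so yourself.  The Ky Fan maximum principle is the right variational tool, and your bookkeeping $\sum_{u\in U}d(u)+(k-2r)+r=\sum_{u\in U}d(u)+k-r$ correctly locates where the surplus $k-r$ ought to come from.  But the construction you describe does not yet deliver it.  The star-type vectors supported on closed neighbourhoods overlap whenever two vertices of $U$ share a neighbour or are joined by a non-matching edge of $G[U]$, and nothing in your outline prevents this; the Gallai-type structure you invoke only says that unmatched vertices are mutually non-adjacent and that their neighbours are matched, which does not stop common neighbours.  After Gram--Schmidt the individual Rayleigh quotients can drop, and your hope that ``the induced corrections are nonnegative'' is unsupported --- there is no monotonicity of $\mathrm{tr}(P_WL P_W)$ under orthogonalisation that would rescue the per-vertex accounting.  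So the difficulty you flag is genuine and is not resolved by the ingredients you list.

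If you wish to continue along variational lines, it is cleaner to start from the already-orthonormal family $\{e_u:u\in U\}$, whose trace contribution is exactly $\sum_{u\in U}d(u)$, and then to perturb within a larger subspace containing one extra coordinate outside $U$ for each unmatched vertex and each matching edge; connectivity and $n>2$ guarantee such exits exist.  This keeps the orthogonality under control from the outset and makes the extra $+1$'s arise as genuine increments rather than as hoped-for residues.  Alternatively, since the paper records that the full majorization (\ref{grone1994-1}) was proved via $M$-matrix methods in \cite{grone1995}, you may find that route more robust than a direct test-vector construction.
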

Further,  using  M-matrix theory and graph structure,  Grone in
\cite{grone1995} confirmed Conjecture~\ref{grone1994-con}.
However,  it seems to be difficult to prove
Conjecture~\ref{grone1994-con2}. In \cite{grone1994}, Grone and
Merris only showed that $\lambda_{n-1}(G)\ge d_{n-1}^*$, in other
words, the  first and last inequalities in the majorization
inequality hold. In 2002, Duval and Reiner \cite{duval2002}
investigated the combinatorial Laplace operators associated to the
boundary maps in a shifted simplicial complex. They proposed a
generalization of Conjecture~\ref{grone1994-con2} and only proved
the following :

\begin{theorem}(\cite{duval2002})\label{duval2002-th}
Let $G$ be a connected graph with the nonincreasing degree
sequence $(d_1^*, \cdots, d_n^*)$ . Then
\begin{equation}\label{duval2002-1}
\lambda_1(G)+\lambda_2(G)\le d_1^*+d_2^*
\end{equation}
\end{theorem}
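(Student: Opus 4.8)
The plan is to combine the Ky Fan maximum principle (the sum version of the Courant--Fischer formula (\ref{courant-fischer-th})) with a purely combinatorial reading of the right-hand side. First I would rewrite the target. Since $d_j^{*}=|\{i:d_i\ge j\}|$, one has
\[
d_1^{*}+d_2^{*}=\sum_{i}\left(\mathbf{1}[d_i\ge 1]+\mathbf{1}[d_i\ge 2]\right)=\sum_{i}\min(d_i,2),
\]
and for a connected graph on $n\ge 2$ vertices $d_1^{*}=n$. Next, by Ky Fan, $\lambda_1(G)+\lambda_2(G)=\max_{U^{T}U=I_2}\mathrm{tr}(U^{T}L(G)U)$, where $U$ ranges over the $n\times 2$ matrices with orthonormal columns. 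Writing $u_i\in\mathbf{R}^{2}$ for the $i$-th row of $U$ and applying $x^{T}L(G)x=\sum_{(v_i,v_j)\in E}(x_i-x_j)^2$ to each column, the objective becomes $\sum_{(v_i,v_j)\in E}\|u_i-u_j\|^{2}$, while the constraint $U^{T}U=I_2$ reads exactly $\sum_i u_iu_i^{T}=I_2$. Thus the whole statement reduces to the geometric inequality
\[
\sum_{(v_i,v_j)\in E}\|u_i-u_j\|^{2}\ \le\ \sum_i\min(d_i,2)\qquad\mbox{whenever }\ \sum_i u_iu_i^{T}=I_2 .
\]

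The delicate point, which I expect to be the main obstacle, is that this cannot be reduced to a one-vertex or one-edge estimate. Splitting the sum is hopeless: the componentwise bound $\lambda_2(G)\le d_2^{*}$ is \emph{false} (for the double star on $n=6$ with two centres of degree $3$ one finds $\lambda_2=3>2=d_2^{*}$), so only the paired sum survives. Moreover the frame condition $\sum_i u_iu_i^{T}=I_2$ is indispensable: if one keeps only $\sum_i\|u_i\|^{2}=2$ and drops the off-diagonal part, the maximum of the left-hand side is $2\lambda_1(G)$, which for the star $K_{1,n-1}$ equals $2n>n+1=d_1^{*}+d_2^{*}$. Equivalently, setting $z_i=a_i+\sqrt{-1}\,b_i\in\mathbf{C}$ for $u_i=(a_i,b_i)$, the constraint becomes $\sum_i|z_i|^{2}=2$ together with the phase-balancing condition $\sum_i z_i^{2}=0$, and $\lambda_1+\lambda_2=\sum_{(v_i,v_j)\in E}|z_i-z_j|^{2}$. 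It is precisely $\sum_i z_i^{2}=0$ that forbids the mass from concentrating on a single high-degree star, and the difficulty is to convert this global orthogonality into the local cap $\min(d_i,2)$ at each vertex.

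To attack the reduced inequality I would pursue two complementary routes. The first is a Lagrange/KKT analysis at the maximiser: there the two columns are eigenvectors, $L(G)u^{(c)}=\lambda_c u^{(c)}$, i.e. $\lambda_c a_i=\sum_{j\sim i}(a_i-a_j)$ and the analogous identity for $b$. I would then apply a Cauchy--Schwarz estimate at each vertex simultaneously to both eigenvectors, exactly as in the proof of Theorem~\ref{zhang2004-th3}, and sum over $i$, the goal being that the two mutually orthogonal directions at vertex $i$ contribute at most $\min(d_i,2)$. The second route is structural: show that the maximum of the frame inequality is attained at a Laplacian-integral, threshold-type configuration, for which $\lambda_i=d_i^{*}$ holds identically, so that equality $\lambda_1+\lambda_2=d_1^{*}+d_2^{*}$ occurs there and bounds all other cases.

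As consistency checks that also calibrate the method's sharpness, I would verify the extremal instances directly. For $K_n$ one has $\sum_{i<j}\|u_i-u_j\|^{2}=n\sum_i\|u_i\|^{2}-\|\sum_i u_i\|^{2}=2n-\|\sum_i u_i\|^{2}\le 2n=d_1^{*}+d_2^{*}$, with equality iff $\sum_i u_i=0$; the star and the path $P_4$ are checked similarly. The hard part is genuinely the frame inequality with the constraint $\sum_i z_i^{2}=0$, which is the substance of the Duval--Reiner argument; the fully general partial-sum version (Conjecture~\ref{grone1994-con2}) is the much harder Grone--Merris problem.
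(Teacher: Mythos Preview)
The survey paper does not give its own proof of Theorem~\ref{duval2002-th}; it merely quotes the result from \cite{duval2002}. So there is no ``paper's proof'' to compare against, and your proposal has to be judged on its own.

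Your reduction is correct and well put: via the Ky Fan maximum principle the inequality (\ref{duval2002-1}) is exactly equivalent to the frame inequality
\[
\sum_{(v_i,v_j)\in E}\|u_i-u_j\|^{2}\ \le\ \sum_{i}\min(d_i,2)\qquad\text{for all }U\text{ with }U^{T}U=I_2,
\]
and you correctly isolate the crucial role of the off-diagonal constraint $\sum_i z_i^{2}=0$. Your counterexamples (the double star for the componentwise inequality, $K_{1,n-1}$ for dropping the phase constraint) are apt and show that you understand where the difficulty lies.

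The gap is that neither of your two routes is close to a proof. Route~(a) is only a hope: at the KKT point the columns are eigenvectors, but the Cauchy--Schwarz manoeuvre of Theorem~\ref{zhang2004-th3} produces, after summation, a bound of the type $\lambda\le d(u)+\sqrt{d(u)m(u)}$ at some vertex; there is no mechanism in that argument that yields a per-vertex cap of $\min(d_i,2)$, and you do not indicate one. The phrase ``the two mutually orthogonal directions at vertex $i$ contribute at most $\min(d_i,2)$'' is exactly the statement to be proved, not a step toward it. Route~(b) is confused: the frame inequality is for a \emph{fixed} graph $G$, so ``the maximum is attained at a threshold-type configuration'' mixes the optimisation over $U$ with a variation over graphs that is not present in the problem. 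Threshold graphs are the equality cases of Conjecture~\ref{grone1994-con2}, but that observation does not by itself bound $\lambda_1+\lambda_2$ for a general $G$.

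In short, you have rephrased the theorem cleanly and diagnosed the obstruction, but you have not supplied the idea that overcomes it; you say as much yourself (``the hard part is genuinely the frame inequality\ldots which is the substance of the Duval--Reiner argument''). To complete the proof you would need to consult \cite{duval2002} for their actual mechanism, which is not the vertex-wise Cauchy--Schwarz of Theorem~\ref{zhang2004-th3}.
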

 Moreover, there are more and more evidence to indicate that
 Conjecture~\ref{grone1994-con2} may hold.  For example, Merris in
 \cite{merris1994b} studied  the relations between spectra and structure for a class of graphs which are called
  {\it degree maximal graphs} and found that $(\lambda_1(G),\cdots,
 \lambda_n(G))=(d_1^*,\cdots, d_n^*)$.  In other words, equality
 in Conjecture~\ref{grone1994-con2} holds.  Hammer and Klemans in
 \cite{hammer1996} investigated the question of which graphs have
 integer spectra and found that the
{\it threshold graphs} are Laplacian integer. In fact, the degree
maximal graphs are exactly the threshold graphs. It is known that
Conjecture~\ref{grone1994-con2} holds for regular graphs and
nearly regular graphs whose vertices have degree either $k$ or
$k-1$. In 2004, Stephen \cite{stephen2004} showed that
Conjecture~\ref{grone1994-con2} holds for trees. However, up to
now,  this Conjecture has still not been proved or disproved.

 \frenchspacing


\begin{thebibliography}{}

\bibitem{abreu2007}N.~M.~.M.~ de Abreu, Old and new results on
algebraic connectivity of graphs, {\it Linear Algebraic and its
Applications,} (2006) doi:10.1016/jlaa.2006.08.017.

\bibitem{alon1986}N.~Alon, Eigenvalues of expanders, {\it Combinatorica} {\bf 6 }(1986)
83-96.


\bibitem{anderson1985} W.~N.~Anderson and T.~D.~Morley,
Eigenvalues of the Laplacian of a graph, {\it Linear and
Multilinear Algebra}, {\bf 18}(1985) 141-145.

\bibitem{bauer1969}F.~L.~Bauer, E.~Deutsch, J.~Stoer, Absch\"{a}tzungen f\"{u}r eigenwerte
positiver linearer operatoren, {\it Linear Algebra and its
Applications}, {\bf 2}(1969) 275-301.

\bibitem{berman1994} A.~Berman and R.~S.~Plemmons,
{\it Nonnegative Matrices in the Mathematical
Sciences,} Academic 1979, SIAM 1994.

\bibitem{berman2000}A.~Berman and X.~D.~Zhang,  A note
on the degree antiregular graphs, {\it Linear and Multilinear
Algebra} {\bf 47}(2000) 307-311.

\bibitem{berman2001} A.~Berman and X.~D.~Zhang, On the spectral
radius of graphs with cut vertices, {\it J. Combin. Theory,}
83(2001) 233-240.

\bibitem{berman2003}A.~Berman and X.~D.~Zhang, Bipartie density of
cubic graphs, {\it Discrete Mathematics}, {\bf 260}(2003) 27-35.

\bibitem{biyi2007}T.~Biyikoglu and J.~Leydold, Faber-Krahu type
inequalities of trees, {\it J. Combinatorial Theory, Ser B}, to
appear.


\bibitem{bondy1976} J.~A.~Bondy and U.~S.~R.~Murty, {\it Graph theory with
applications,}  Macmillan Press, New York, 1976.

\bibitem{cao1998} D.~Cao, Bounds on eigenvalues and chromatic
numbers, {\it Linear Algebra and its Applications}  270(1998)
1-13.

\bibitem{Chung1998}F.~R.~K.~Chung, {\it Spectral Graph Theory, CMBS Lecture Notes
92,} American Mathematical Society, Providence, RI, 1997.

\bibitem{Chebotrarev1997} P.~Chebotarev, P.~Yu and E.~V.~Shamis,
The matrix-forest theorem and measuring relations in small social
group, {\it Automation and Remote Control} {\bf 58}(1997)
1505-1514.



\bibitem{Cvetkovic1980} D.~Cvetkovi\'{c}, M.~Doob and H.~Sachs,
{\it  Spectra of Graphs- Theory and
Applications, } Academic Press, New Work, 1980. Third edition,
1995.

\bibitem{cvetkovic1988}D.~Cvetkovi\'{c}, M.~Doob, I.~Gutman, and A.~Torgasev,
{\it  Recent results in the theory of graph spectra}, Ann. Discr.
Math. 36, North-Holland, 1988.

\bibitem{das2003}K.~Ch.~Das, An improved upper bound for Laplacian graph
eigenvalues, {\it Linear Algebra  and its Applications,} {\bf
368}(2003) 269-278.

\bibitem{das2004}K.~Ch.~Das, A characterization on graphs which achieve the upper
bound for the largest Laplacian eigenvalue of graphs, {\it Linear
Algebra  and its Applications}, {\bf 376}(2004) 173-186.

\bibitem{das2004b}K.~Ch.~Das, The largest two Laplacian
eigenvalues of a graph, {\it Linear and Multilinear Algebra },
{\bf 52}(2004) 441-460.


\bibitem{das2005}K.~Ch.~Das, Sharp upper bounds on the spectral
radius of the Laplacian matrix of  graphs, {\it Acta Math. Univ.
Comenianae}, {\bf LXXIV(2)}(2005) 185-198.

\bibitem{duval2002}A.~Duval and V. Reiner,  Shifted simplicial
complexes are Laplacian integral, {\it Transactions of the
American mathematical society }, {\bf 354(11)}(2002) 4313-4344.


\bibitem{erdos1960}P.~Erd\"{o}s and T.~Gallai, Graphs with
prescribed degrees of vertices (Hungarian) {\it Mat. Lapok}, {\bf
11}(1960) 264-274.

\bibitem{fajtlowicz1987}S.~Fajtlowicz, On conjectures of Graffiti,
II, {\it Congress Number}, {\bf 60}(1987) 187-197.


\bibitem{fajtlowicz1988a}S.~Fajtlowicz, On conjectures of Graffiti,
{\it Discrete Mathematics}, {\bf 72}(1988) 113-118.


\bibitem{fajtlowicz1988b}S.~Fajtlowicz, On conjectures of
Graffiti, III, {\it Congress Number}, {\bf 66}(1988) 23-32.

\bibitem{fajtlowicz1990}S.~Fajtlowicz, On conjectures of Graffiti,
IV, {\it Congress Number}, {\bf 70}(1990) 231-240.

\bibitem{fajtlowicz2002}S.~Fajtlowicz,  Written on the Wall, A regulary updated file
accessible from http://www.math.uh.edu/~clarson/.

\bibitem{favaron1993} O.~Favaron, M.~Maheo and J.~F.~Sacle, Some
eigenvalue properties in graphs (conjectures of Graffiti II), {\it
Discrete Mathematics}, {\bf 111}(1993) 197-220.

\bibitem{feng2007}L.~H.~Feng, Q.~Li and X.~D.~Zhang,
Some sharp upper bounds on the spectral radius of graphs, {\it
Taiwanese Journal of Mathematics}, to appear.


\bibitem{fiedler1973}M.~Fiedler, Algebra connectivity of graphs, {\it
Czechoslovake Mathematical Journnal, } {\bf 23(98)}(1973) 298-305.

\bibitem{fiedler1975}M.~Fiedler, A property of eigenvectors of nonnegative
symmetric matrices and its application to graph theory, {\it
Czechoslovake Mathematical Journnal, } {\bf 25(98)}(1975) 607-618.


\bibitem{Goiender1981} V.~E.~Goiender, V.~V.~Drboglav and
A.~B.~Rosenblit,Graph potentais method and its application for
chemical information processing, {\it J. Chem. Inf. Comput. Sci.}
{\bf 21}(1981) 126-204.

\bibitem{grone1991}R.~Grone, On the geometry and Laplacian of a
graph,  {\it Linear Algebra and its Applications,}   {\bf
150}(1991) 167-178.

\bibitem{grone1995}R.~Grone, Eigenvalues and the degree sequences
of graphs, {\it Linear and Multilinear Algebra},  {\bf 39}(1995)
133-136.

\bibitem{grone1990a}R.~Grone and R.~Merris, Ordering trees by
algebraic connectivity, {\it Graphs and Combin,}, {\bf 6}(1990)
229-237.



\bibitem{grone1994}R.~Grone and R.~Merris, The Laplacian spectrum
of a graph. II. {\it SIAM J. Discrete Math.,} {\bf 7(2)}(1994)
221-229.

\bibitem{grone1990b} R.~Grone, R.~Merris and V.~Sunder, The
Laplacian spectrum of a graph, {\it SIAM J. Matrix Analysis and
its Applications,}  {\bf 11}(1990) 218--238.





\bibitem{Guo2001}J.~M.~Guo and T.Wang,  A relation between
the matching number and Laplacian spectrum of a graph, {\it Linear
Algebra and its Applications,}   {\bf 325}(2001) 71-74.

\bibitem{guo2003}J.~M.~Guo, On the Laplacian spectral radius of a
tree, {\it Linear Algebra and its Applications,}   {\bf 368}(2003)
379-385.


\bibitem{guo2007a}J.~M.~Guo, The $k-$th Laplacian eigenvalue of a
tree, {\it Journal of Graph Theory,} {\bf 54}(2007) 51-57.


\bibitem{guo2007b}J.~M.~Guo, On the third largest Laplacian eigenvalue
of a graph, {\it Linear and Multilinear Algebra}, to appear.


\bibitem{gutman1998} I.~Gutman, D.~Babic and V.~Gineityte,
 Degeneracy in the equivalent bond orbital model for high energy
 band in the photoelectron spectra  of saturated hydrocarbons, {\it ACH Models
 in Chemistry, } {\bf 135}(1998) 901-909.

\bibitem{gutman1999} I.~Gutman, V.~Gineityte, M.~Lepovi\'c and
M.~Petrovi\'c,  The high-energy band in the photoelectron spectrum
of alkanes and its dependence on molecular structure,  {\it J.
Serb. Chem.Soc, } {\bf 64}(1999) 673--680.

\bibitem{hammer1996}P.~L.~Hammer and A.~K.~Kelmans, Laplacian
spectra and spanning trees of threshold graphs, {\it Discrete
Applied Mathematics}, {\bf 65}(1996) 255-273.

\bibitem{hansen2002}P.~Hansen and H.~Melot, Computers and
discovery in algebraic graph theory, {\it Linear Algebra and its
Applications}, {\bf 356}(2002) 211-230.

\bibitem{hong2001}Y.~Hong, J.~L.~Shu, K.~F.~Fang, A sharp upper bound of the
spectral radius of graphs, {\it J. Combin. Theory, Ser. B} {\bf
81}(2001) 177-183.

\bibitem{hong2005}Y.~Hong and X.~D.~Zhang, Sharp upper and lower bounds for largest
eigenvalue of the Laplacian matrices of trees, {\it Discrete
Mathematics,}  {\bf 296}(2005) 187-197.


\bibitem{horn1985}R.~A.~Horn and C.~R.~Johnson, {\it Matrix
Analysis,}  Cambridge University Press, London, 1985.

 \bibitem{hou2002}Y.~P.~Hou and J.~S.~Li, Bounds on the largest
 eigenvalues of trees with a given size of matching, {\it Linear
 Algebra  and its Applications,} {\bf 342}(2002) 203-217.


\bibitem{hou2005}Y.~P.~Hou, Bounds for the least Laplacian eigenvalue of a
signed graph, {\it Acta Math. Sin. (Engl. Ser.),} {\bf 21}(2005),
no. 4, 955--960.

\bibitem{kirchhoff1847}G.~Kirchhoff, Uber die Auflosung der
Gleichungen, auf welche man bei der Untersuchung der linearen
Verteilung galvanischer Strome gefuhrt wird, {\it Ann Phys. Chem.}
{\bf 72}(1847) 497-508.


\bibitem{kirkland2000} S.~Kirkland, A bound on the algebraic
connectivity of a graph in terms of the number of cutpoints, {\it
 Linear and Multilinear Algebra}, {\bf 47}(2000), 93-103.

\bibitem{kirkland2001}S.~Kirkland, An upper bound on the
algebraic connectivity of graphs with many cutpoints, {\it
 Electron. J. Linear  Algebra}, {\bf 8}(2001) 94-109.

\bibitem{kirkland2002}S.~Kirkland, J.~Molitierno, B.~Shader, ON
graphs with equal algebraic and vertex connectivity, {\it Linear
Algebra and Applications,} {\bf 341}(2002) 45-56.


\bibitem{kirkland1997}S.~Kirkland, M.~Neumann, B.~Shader, On a
bound on algebraic connectivity: the case of equality, {\it
Czechoslovake Mathematical Journnal,} {\bf 48}(1997) 65-77.



\bibitem{li2000}J.~S.~Li and Y.~L.~Pan, A note on the second
lasrgest eigenvalue of the Lapalcian matrix of a graph, {\it
Linear and Multilinear Algebra}, {\bf 48}(2000) 117-121.


\bibitem{li2001} J.~S.~Li and Y.~L.~Pan, De Caen's inequality and
bounds on the largest Laplacian eigenvalue of a graph,  {\it
Linear  Algebra and Applications,} {\bf 328}(2001) 153-160.

\bibitem{li2004}J.~S.~Li and Y.~L.~Pan, Upper bounds for the Laplacian
graph eigenvalues, {\it  Acta Math. Sin. (Engl. Ser.),}  {\bf
20(5)}(2004) 803-806.


\bibitem{li1997}J.~S.~Li and X.~D.~Zhang, A new upper bound for
eigenvalues matrix of a graph, {\it Linear
 Algebra and its Applications,} {\bf 265}(1997) 93-100.

\bibitem{li1998}J.~S.~Li and X.~D.~Zhang, On Laplacian eigenvalues of a graph, {\it Linear
 Algebra and its Applications,} {\bf 285}(1998) 305-307.

\bibitem{li2003}J.~S.~Li, X.~D.~Zhang, Y.~L.~Pan,
 Laplacian eigenvalues of graphs (in Chinese), {\it Advances in Mathematics (China)},
  {\bf 32(2)}(2003) 158-165.

\bibitem{liu2004}H.~Q.~Liu, M.~Lu, F.~Tian, On the Laplacian spectral radius of a
graph, {\it Linear  Algebra and its Applications,} {\bf 376}(2004)
135-141.

\bibitem{lu2005}M.~Lu, H.~Q.~Liu, F.~Tian, Bounds of Laplacian
spectrum of graphs based on the domination number, {\it Linear
Algebra and Applications,} {\bf 402}(2005) 390-396.

\bibitem{lu2007}M.~Lu, L.~Z.~Zhang, F.~Tian, Lower bounds of the
Laplacian spectrum of graphs based on diameter, {\it Linear
Algebra and Applications,} {\bf 420}(2006) 400-406.

\bibitem{marshal1979}A.~W.~Marshal and I.~Olkin, {\it Inequalities:
Theorey of Majorization and its Applications,} Academic Press, New
York, 1979.


\bibitem{merris1991} R.~Merris, The number of eigenvalues greater
than two in the Laplacian spectrum of a graph, {\it Portugal.
Math.}, {\bf 48}(1991) 345--349.

\bibitem{merris1994} R.~Merris, Laplacian matrices of graphs: a survey, {\it Linear
Algebra and its Applications,} {\bf 197/198}(1994) 143-176.

\bibitem{merris1994b} R.~Merris, Degree maximal graphs are Laplacian integral,
 {\it Linear
Algebra and its Applications,} {\bf 199}(1994) 381-389.


\bibitem{merris1995}R.~Merris, A survey of graph Lpalacians, {\it
Linear and Multilinear Algebra}, {\bf 39}(1995) 19-31.

\bibitem{Merris1997} R.~Merris, Doubly stochastic graph  matrices,
{\it University Beograd. Publ. Elektrotehn. Fak. Ser Mat.} {\bf
8}(1997) 64-71.

\bibitem{Merris1998a} R. Merris, A note on Laplacian graph eigenvalues,
{\it Linear Algebra  and its Applications,} {\bf 285}(1998) 33-35.

\bibitem{Merris1998b} R.~Merris, Doubly stochastic graph
matrices II, {\it Linear  and Multilinear Algebra } {\bf 45}(1998)
275-285.

\bibitem{mohar1991}B.~Mohar, The Laplacian spectrum of graphs, in
Y.~Alavi {\it et al. (Eds.), Graph Theory, Combinatorics, and
Applications, } Vol.2, pp.871-898, Wiley, New York, 1991.

\bibitem{mohar1991b}B.~Mohar, Eigenvalues, diameter, and means
distance in grpahs, {\it Graphs and Combinatorics}, {\bf 7}(1991)
53-64.

\bibitem{mohar1992}B.~Mohar, Laplace eigenvalues of graphs-a
survey, {\it Discrete Mathematics,} {\bf 109}(1992) 171-183.

\bibitem{mohar1997} B.~Mohar, Some applications of Laplace eigenvalues of graphs,
in: G.~Hahn and G.~Sabidussi (Eds.),  {\it Graph Symmetry, },
pp.225-275, Kluwer AC. Press, Dordrecht, 1997.

\bibitem{molitierno2003}J.~Molitierno and M.~Neumann, On trees
with perfect matchings, {\it Linear Algebra and its Applications,}
{\bf 362}(2003) 75-85.

\bibitem{Nikiforov2007}V.~Nikiforov,  Bounds on graph eigenvalues
I, {\it Linear Algebra and its Applications,} {\bf 420}(2003)
667-671.


\bibitem{pan2002} Y.~L.~Pan, Sharp upper bounds for the Laplacian
graph eigenvalues, {\it Linear  Algebra and its Applications,}
{\bf 355}(2002) 287-295.

\bibitem{pan2003}Y.~L.~Pan, Two necessary conditions for
$\lambda_2(G)=d_2(G)$, {\it Linear and Multilinear Algebra}, {\bf
51}(2003) 31-38.

\bibitem{petrovic2000} M.~Pettrovic, I.~Gutman, M.Lepovic and
B.~Milekic, On Bipartite graphs with small number of Laplacian
eigenvalues greater than two or three, {\it Linear and Multilinear
Algebra,} {\bf 47}(2000) 205-215.

\bibitem{petrovic1998} M.~Petrovi\'c, B.~Mileki\'c,  On the
second largest eigenvalues of line graphs, {\it J. Graph Theory},
{\bf 27}(1998) 61-66.

\bibitem{rojo2005} O.~Rojo, Improved bounds for the largest
eigenvalue of trees,  {\it Linear
 Algebra and  its Applications,} {\bf 404}(2005) 297-304.

\bibitem{rojo2000} O.~Rojo, R.~Soto and H.~Rojo, An always
nontrivial upper bound for Laplacian graph eigenvalues, {\it
Linear  Algebra and Applications,} {\bf 312}(2000) 155-159.

\bibitem{shearer1992}J.B.~Shearer, A note on bipartite subgraph of triangle-
free graphs, {\it Random Structres Algorithms,} {\bf 3} (1992)
223-226.

\bibitem{shu2002} J.~L.~Shu, Y.~Hong and R.~K.~Wen,
A sharp upper bound on the largest eigenvalue of the Laplacian
matrix of a graph, {\it Linear Algebra and its Applications.,}
{\bf 347}(2002) 123-129.

\bibitem{stephen2004}T.~Stephen, A majorization bound for the
eigenvalues of some graph Laplacians, {\it
arXiv:math.CO/0411153v1,} 7 Nov, 2004.

\bibitem{Stevanovic2003}D.~Stevanovi$\acute{\rm c}$, Bounding the
largest eigenvalue of trees interms of the largest vertex degree,
{\it Linear Algebra and its Applications} {\bf 360}(2003) 35-42.

\bibitem{Stevanovic2004}D.~Stevanovi$\acute{\rm c}$,
Bipartite density of cubic graphs: the case of equality, {\it
Discrete Mathematics}, {\bf 283 }(2004) 279-281.

\bibitem{tan1999}J.~S.~Tan, Ordering trees by the spectral radius
of Laplacian, {\it Proc. Japan Acad. Ser A,}  {\bf 75}(1999)
188-193.

\bibitem{dam1998} E.R.~Van Dam and W.H.~Haemers, Graphs with
  constant $\mu$ and $\overline{\mu} $,  {\it Discrete Math.}, {\bf 182}(1998)
  293-307.

\bibitem{wang2008} X.~M.~Wang, Y.~L.~Pan, J. Shen, A lower bound
on the $k-$th Laplacian eigenvalue of a connected simple graph,
submitted.

\bibitem{yu2004}A.~M.~Yu, M.~Lu, and F.~Tian, On the spectral radius of
graphs, {\it Linear Algebra and its Applications}, {\bf 387}(2004)
41-49.


\bibitem{yu2005}A.~M.~Yu, M.~Lu, and F.~Tian, Characterization on
graphs which acieve a Das' upper bound for Laplacian spectral
radius, {\it Linear Algebra and its Applications}, {\bf 400}(2005)
271-277.

\bibitem{zhang2003a} X.~D.~Zhang, Graphs with fourth Laplacian
eigenvalue less than two, {\it European Journal of Combinatorics},
  {\bf 24}(2003) 617--630.

\bibitem{zhang2003b} X.~D.~Zhang,  Bipartite graphs with small third
Laplacian eigenvalue, {\it Discrete Mathematics,} {\bf 278}(2004)
  241-253.

\bibitem{zhang2004}X.~D.~Zhang, Two sharp upper bounds for the Laplacian
eigenvalues, {\it Linear Algebra and its Applications}, {\bf
376}(2004) 207-213.

\bibitem{zhang2004b}X.~D.~Zhang, On the two conjectures of Graffiti,
 {\it Linear Algebra and its Applications}, {\bf 385}(2004)
369-379.


\bibitem{zhang1998}X.~D.~Zhang and J.~S.~Li, The two largest eigenvalues of
Laplacian matrices of trees (in Chinese), {\it J. China Univ. Sci.
Technol.},  {\bf 28 }(1998) 513-518.

\bibitem{zhang2001}X.~D.~Zhang and J.~S.~Li, On the $k-$th largest
eigenvalue of the Laplacian matrix of a graph, {\it Acta Applied
Mathematicae Sinica, English series,} {\bf 17}(2001) 183-190.


  \bibitem{zhang2002}X.~D.~Zhang and J.~S.~Li, Spectral radius of non-negative matrices and digraphs,
  {\it Acta Math. Sin. (Engl. Ser.), } {\bf 18(2) }(2002) 293-300.

\bibitem{zhang2002a} X.~D.~Zhang and J.~S.~Li,  The Laplacian
spectrum of mixed graphs, {\it Linear Algebra and its
Applications}, {\bf 351 }(2002) 11-20.

\bibitem{zhangli2004}X.~D.~Zhang and J.~S.~Li, {\it A note on the
Laplacian eigenvalues}, {\it Journal of Mathematical Research and
Exposition}, {\bf 24(2)}(2004) 388-390.

\bibitem{zhang2002b}X.~D.~Zhang and R.~Luo, The spectral radius of
triangle-free graphs, {\it Australasian Journal of Combinatorics,}
{\bf 26}(2002) 33-39.

\bibitem{zhang2003c}X.D. Zhang and R.~ Luo,
The Laplacian eigenvalues of mixed graphs, {\it Linear Algebra and
its Applications} {\bf 362}(2003) 109-119.

\bibitem{zhang2005}X.~D.~Zhang and J.~X.~Wu, Doubly stochastic
matrices of trees, {\it Applied Mathematics Letters}, {\bf
18}(2005) 339-343.






\end{thebibliography}
\end{document}